\definecolor{Blue}{rgb}{0,0,1}
\definecolor{Red}{rgb}{1,0,0}
\newcounter{themargin}
\long\def\forget#1{}
\theoremstyle{plain} 
\newtheorem{Thm}{Theorem}[section]%[subsection]
\newtheorem{Prop}[Thm]{Proposition}
\newtheorem{Lem}[Thm]{Lemma}
\newtheorem{Cor}[Thm]{Corollary}
\theoremstyle{definition} 
\newtheorem{Def}[Thm]{Definition}
\newtheorem{Ex}[Thm]{Example}
\theoremstyle{remark} 
\newtheorem{Rem}[Thm]{Remark}
\newtheorem{Conj}[Thm]{Conjecture}
\hskip\labelsep {\textsc{Proof} \rm #1}:]}%
\labelsep \textsc{Proof}:]}%
\def\EndOfProof{\hskip .5em \vrule width 1.0ex height 1.0ex depth 0.3ex}
\newbox\dottobox
\wd\dottobox{\hss$
\UseComputerModernTips\xymatrix@C=.5cm{\ar@{.>}[r]&\\}
                                      $\hss}
\newbox\leftdottobox
\wd\leftdottobox{\hss$
\UseComputerModernTips\xymatrix@C=.5cm{\ar@{<.}[r]&\\}
                                      $\hss}
\newbox\dotintobox
\wd\dotintobox{\hss$
\UseComputerModernTips\xymatrix@C=.5cm{\ar@{^{ (}.>}[r]&\\}
                                      $\hss}
\newcommand{\Tempnewpage}%{}%
{\newpage}
\DeclareMathOperator{\GL}{GL}
\begin{document}
\title{Functional Hecke algebras and simple 
Bernstein blocks of a $p$-adic $\GL_n$ in non-defining characteristic}
\author{David-Alexandre Guiraud\footnote{
Interdisciplinary Center for Scientific Computing,
Heidelberg University, Germany david.guiraud@iwr.uni-heidelberg.de}$\;\,$\footnote{This research was 
conducted while the author received funding from the Graduiertenkolleg Heidelberg (LGWG scholarship).}
}
%\date{Received on MONTH, YEAR}
%\issueinfo{000}{0}{Month}{0000}
%\doiinfo{00.0000/s000000-000000-000000}
\maketitle
\vspace{2cm}
\begin{abstract}
Let $G_{n}=\operatorname{GL}_{n}(F)$, where $F$ is a non-archimedean local 
field with
residue characteristic $p$ and where $n=2k$ is even. In this article, we investigate a question occurring in the decomposition of 
the category of $\ell$-modular smooth representations of $G_n$ into Bernstein blocks (where $\ell\neq p$). The easiest block not investigated in 
\cite{guiraud} is the one defined by the standard parabolic subgroup with Levi factor $M=\GL_k(F) \times \GL_k(F)$ and by
an $M$-representation of the form $\pi_0 \boxtimes \pi_0$ with $\pi_0$ a supercuspidal $\GL_k(F)$-representation.
This block is Morita equivalent to a Hecke algebra which we can describe as a twisted tensor product of a finite Hecke algebra
(i. e. a Hecke algebra occurring in the representation theory of the finite group $\GL_k(p^{\alpha})$ in non-defining characteristic $\ell$) 
and the group ring of $\mathbb{Z}^2$. This enables us to describe how a conjectured connection between finite Hecke algebras 
(which is similar to a connection postulated by Brou\'{e} in \cite{Broue}) would lead to an equivalence between the described block and
the unipotent block of $\operatorname{GL}_2(F^k)$, where $F^k$ is the unramified extension of degree $k$ over $F$.
\end{abstract}
\section{Introduction}
Let $F$ be a non-archimedean local field with ring of integers $\mathcal{O}_F$, uniformizer $\varpi_F$ and finite residue field $k_F \cong \mathbb{F}_{q} = \mathbb{F}_{p^{\alpha}}$ for some
prime $p$. Moreover, let $R$ be an algebraically closed field of positive characteristic
$\ell \neq p$ such that $R$ arises as residue field in some $\ell$-modular system 
$(R, \mathcal{O}_K, K)$. Continuing the exposition in \cite{guiraud}, we consider the category $\mathfrak{R}_R(G)$ of smooth
$R$-valued representations of the group $G = G_n = \GL_n(F)$. In particular, recall (e.g. from Theorem 2.22 of \cite{guiraud}) that 
the level-$0$ part of $\mathfrak{R}_R(G)$ decomposes as a direct sum of Bernstein-blocks, each of the form
\begin{equation*}
\mathfrak{R}_R^{[M, \pi]_G}(G) = \bigg\{ V\in \mathfrak{R}_R(G)\,\bigg|\, \substack{\text{ any irreducible JH-constituent of $V$ is a JH-constituent
}\\ \text{of $\operatorname{i}^G_{N\subset P}(\sigma)$ for some $(N,\sigma) \in [M, \pi]_G$}}\bigg\},
\end{equation*}
where $[M, \pi]_G$ denotes a $G$-equivalence class of supercuspidal pairs and $\operatorname{i}^G_{N\subset P}$ denotes 
parabolic induction. So, in particular, each $M$ is a Levi subgroup of $G$ and
$\pi$ is an irreducible supercuspidal $M$-representation. In this paper, we will study simple Bernstein-blocks of the
smallest non-trivial form, i. e. where $M \cong \GL_k(F) \times \GL_k(F)$ and $\pi \cong \pi_0 \boxtimes \pi_0$, where
$n= 2k$ and $\pi_0$ is an irreducible supercuspidal level-$0$ $\GL_k(F)$-representation\footnote{As \cite{guiraud} explained how to 
reduce from arbitrary blocks to simple blocks, this is the logically next case to consider, from the point of view of \cite{guiraud}.}.

Now, if $\mathcal{V}\in \mathfrak{R}_R^{[M, \pi]_G}(G)$ is a pro-generator, we get an equivalence 
\begin{equation*}
\mathfrak{R}_R^{[M, \pi]_G}(G) \underset{\text{Morita}}{\cong} \operatorname{End}_G(\mathcal{V}).
\end{equation*}
As $\pi_0$ is level-$0$, it is of the form $\operatorname{ind}^{M}_{Z\GL_k(\mathcal{O}_F)\times Z\GL_k(\mathcal{O}_F)}(\rho)$, 
where $\rho = \rho_0 \boxtimes \rho_0$ and $\rho_0$
it is inflated from an irreducible supercuspidal representation of the finite group $\mathcal{G}_k = \GL_k(q)$ and where
$Z$ denotes the center of $\GL_k(F)$. We will not
distinguish between $\rho_0$ (resp. $\rho$) as a representation of $\GL_k(\mathcal{O}_F)$ and of $\mathcal{G}_k$
(resp. of $\GL_k(\mathcal{O}_F)\times\GL_k(\mathcal{O}_F)$ and of $\mathcal{G}_k\times\mathcal{G}_k$).
\begin{Ex} Assume that $\ell$ does not divide the order of $\mathcal{G}_n$. Then, in particular, 
$\ell$ does not divide $q^k -1$, and this implies (see III.2.9 of \cite{VigLivre}) that $\rho_0$
is projective. Along the lines of the proof of Theorem 4.3 in \cite{guiraud}, one can show 
that $\operatorname{i}_{\mathscr{P}}^G(\rho)$ is a pro-generator, 
where $\operatorname{i}_{\mathscr{P}}^G$ denotes the parahoric induction\footnote{For notations and definition, see
chapter 2.2 in \cite{guiraud}.} from the standard parahoric subgroup 
$\mathscr{P}\subset G$ uniquely characterized by $\mathscr{P}/\mathscr{P}(1) \cong \mathcal{G}_k\times \mathcal{G}_k$.
The important point is to use that $R[\mathcal{G}_n]-\textbf{Mod}$ is semisimple, so
$\operatorname{Hom}_{\mathcal{G}_n}(\operatorname{i}_{\mathcal{G}_k\times \mathcal{G}_k}^{\mathcal{G}_n}(\rho), X)$ is
non-zero for any subquotient (hence, subrepresentation) $X$ of $\operatorname{i}_{\mathcal{G}_k\times \mathcal{G}_k}^{\mathcal{G}_n}(\rho)$.
This implies
\begin{equation*}
\mathfrak{R}_R^{[M, \pi]_G}(G) \underset{\text{Morita}}{\cong} \mathscr{H}_R(G, \mathscr{P}, \rho),
\end{equation*}
where the object on the right hand side is the Hecke algebra of the type $(\mathscr{P}, \rho)$. By \cite{VigLivre}, Proposition III.3.6, 
one gets an isomorphism 
\begin{equation*}
\mathscr{H}_R(G, \mathscr{P}, \rho) \cong H_R(2, q^k) \cong \mathscr{H}_R(GL_2(F^k), \mathscr{I}),
\end{equation*}
where $F^k$ is the unramified extension of $F$ of degree $k$ and $\mathscr{I}$ is the Iwahori subgroup of
$GL_2(F^k)$. Hence, abbreviating $G' = GL_2(F^k)$ and denoting by $T'\subset G'$ the standard torus, we get
an equivalence of categories
\begin{equation}\label{equiv}
\mathfrak{R}_R^{[M, \pi]_G}(G) \cong \mathfrak{R}_R^{[T', 1]_{G'}}(G').
\end{equation}
The block on the right is the unipotent block of $G'$. It contains the trivial representation. 
\end{Ex}
The assumptions of the above example can be called banal for our context. 
Our aim in this article is to investigate the non-banal situation.
To this end, we will say in Section \ref{sec_progen} how one can write down a pro-generator $\mathscr{V}$ of the
block $\mathfrak{R}_R^{[M, \pi]_G}(G)$, depending only on 
the finite-group data $\mathcal{G}_k$ and $\rho_0$. In Section \ref{funcHecke} we will describe the multiplicative
structure of ``functional'' Hecke algebras, i. e. algebras of the form
\begin{equation*}
\mathscr{H}_R(G, \mathscr{P}, V) = \operatorname{End}_G(\operatorname{i}_{\mathscr{P}}^G(V))
\end{equation*}
where $V$ is a cuspidal (but not necessarily irreducible) representation of 
$\mathcal{M}=\mathcal{G}_k\times \mathcal{G}_k$. This will be used in Section \ref{tensor_dec} to
establish the main result of this article (Corollary \ref{MainResult}), which gives a 
decomposition of the Hecke algebra as a twisted\footnote{This construction is explained
in section \ref{twistedTensorprods}. We would also like to point the reader to the appendix
of \cite{cht}, where M.-F. Vign\'{e}ras gives a similar (but not identical) decomposition in a special case.} tensor product
\begin{equation*}
\mathscr{H}_R(G, \mathscr{P}, V) 
\cong R[\mathbb{Z}]\otimes_R^{\zeta} \mathscr{H}_R(\mathcal{G}_n, \mathcal{M} , V).
\end{equation*}
If we apply this to the constructed pro-generator $\mathscr{V}$ (which is of the
form $\operatorname{i}_{\mathscr{P}}^G(V)$ for a suitable $\mathcal{M}$-representation
$(\rho, V)$), this gives a strong relation between the block $\mathfrak{R}_R^{[M, \pi]_G}(G)$ 
and the block 
\begin{equation*}
\mathfrak{R}_R^{[\mathcal{M}, \rho]_{\mathcal{G}_n}}(\mathcal{G}_n)\subset \operatorname{Rep}_R(\mathcal{G}_n).
\end{equation*}
This second block is Morita equivalent to $\mathscr{H}_R(\mathcal{G}_n, \mathcal{M} , V)$ and consists of those
represenentations whose Jordan-H\"older quotients are isomorphic to Jordan-H\"older quotients
of the Harish-Chandra induced of $V$ from $\mathcal{M}$ to $\mathcal{G}_n$. 
This decomposition also suggests a way to generalize the equivalence (\ref{equiv}) (which we will
formulate as
Conjecture \ref{mainConj}): If we can establish an isomorphism of the ``finite'' Hecke algebras
\begin{equation}\label{isoOfFiniteHeckeAlgs}
\mathscr{H}_R(\mathcal{G}_n, \mathcal{M} , V) \cong
\mathscr{H}_R(\operatorname{GL}_2(q^k), \mathcal{B} , V_0)
\end{equation}
(where $\mathcal{B}$
denotes the Borel subgroup and $V_0$ denotes some pro-generator of the unipotent block
of $\operatorname{GL}_2(q^k)$) and if this isomorphism is compatible with the respective 
twisting parameters $\zeta$, then we get an isomorphism 
\begin{equation*}
\mathscr{H}_R(G, \mathscr{P}, V) \cong 
\mathscr{H}_R(\operatorname{GL}_2(F^m), \mathscr{I}, V_0)
%= \mathscr{H}_R(\operatorname{GL}_2(F^k), \mathscr{I})
\end{equation*}
and, consequently, an equivalence of $\mathfrak{R}_R^{[M, \pi]_{G}}(G)$ 
with the unipotent block of $\operatorname{GL}_2(F^k)$. The precise requirements on the isomorphism 
(\ref{isoOfFiniteHeckeAlgs}) are summarized in Conjecture \ref{BroueConj}.

It seems plausible that a similar decomposition of the Hecke algebra as a twisted tensor product can be 
established with some care in more general situations (i. e. for more complicated blocks). However, the trick used
in the proof of Theorem \ref{progenerator} does not carry over. So, even with a tensor decomposition theorem and
a suitable version of Conjecture \ref{BroueConj} at hand, the presented approach will not directly imply
an isomorphism of blocks \`a la (\ref{equiv}).
This is the reason why we decided to restrict our attention to blocks of the mentioned type.

\paragraph{Acknowledgements} The author wants to thank Gebhard B\"ockle for his encouragement
and his comments.
%{\color{Red} TODO: Acknowledgements; 
%Replace the "1" above by a progenerator for the unipotent block (it is not 1 in general)..
%}

\section{Preliminaries on the pro-generator} \label{sec_progen}
Retain the notations of the previous chapter, in particular 
\begin{itemize}
\item $G = G_n =  \GL_n(F), \mathcal{G} = \mathcal{G}_n = \GL_n(q)$  with $n=2k$;
\item $\mathscr{P}\subset G$ the standard parahoric subgroup with reductive quotient
$\mathscr{P}/\mathscr{P}(1) \cong \mathcal{M} := \mathcal{G}_k\times \mathcal{G}_k$;
\item $\rho$ denotes an $\mathcal{M}$-representation of the form $\rho_0 \otimes \rho_0$, where 
$\rho_0$ is an irreducible supercuspidal $\mathcal{G}_k$-representation. 
\end{itemize}
Now, inside $R[\mathcal{M}]\!-\!\textbf{Mod}$ fix a projective cover $P$ of 
$\rho$. Define $V = P\oplus P^{\ast}$, where $P^{\ast}$ is the contragredient representation of $P$.
As usual, we do not distinguish between $V$ and its inflation along $\mathscr{P}(1)$
to a representation of $\mathscr{P}$.
\begin{Prop}
$V$ is cuspidal (in the sense that it vanishes under non-trivial 
Harish-Chandra functors).
\end{Prop}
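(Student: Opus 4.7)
The plan is to reduce cuspidality of $V$ to cuspidality, as $\mathcal{G}_k$-representations, of the two building blocks $P_0$ and $P_0^*$, where $P_0$ denotes the projective cover of $\rho_0$ inside $R[\mathcal{G}_k]$-$\textbf{Mod}$. Since $\mathcal{M}$ is a direct product, every proper parabolic of $\mathcal{M}$ is of the form $\mathcal{P}_1\times\mathcal{P}_2$ with Levi $\mathcal{L}_1\times\mathcal{L}_2$, where at least one $\mathcal{L}_i$ is a proper Levi of $\mathcal{G}_k$, and the associated Jacquet functor acts on exterior tensor products by
\[
r^{\mathcal{M}}_{\mathcal{L}_1\times\mathcal{L}_2}(X_1\boxtimes X_2)\cong r_{\mathcal{L}_1}(X_1)\boxtimes r_{\mathcal{L}_2}(X_2).
\]
Combined with the standard factorisation $P\cong P_0\boxtimes P_0$ of a projective cover over a direct product of finite groups (and consequently $P^*\cong P_0^*\boxtimes P_0^*$), this reduces the proposition to showing that $r_{\mathcal{L}}(P_0)=0=r_{\mathcal{L}}(P_0^*)$ for every proper Levi $\mathcal{L}\subset\mathcal{G}_k$.

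For $P_0$ I would apply Frobenius reciprocity: for every $\mathcal{L}$-representation $\sigma$ one has
\[
\operatorname{Hom}_{\mathcal{L}}\bigl(r_{\mathcal{L}}(P_0),\sigma\bigr)\cong\operatorname{Hom}_{\mathcal{G}_k}\bigl(P_0,\,i_{\mathcal{L}}^{\mathcal{G}_k}(\sigma)\bigr).
\]
Since $P_0$ is the projective cover of the simple $\rho_0$ in a finite-dimensional module category over the algebraically closed field $R$, the classical identity $\dim_R\operatorname{Hom}_{\mathcal{G}_k}(P_0,X)=[X:\rho_0]$ turns the right-hand side into the composition-factor multiplicity $[i_{\mathcal{L}}^{\mathcal{G}_k}(\sigma):\rho_0]$. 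Exactness of parabolic induction in non-defining characteristic lets one reduce to $\sigma$ irreducible, in which case the vanishing is precisely the supercuspidality of $\rho_0$.

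For $P_0^*$ I would run the dual argument by means of the second adjunction. Because the unipotent radical $\mathcal{U}$ is a $p$-group and $\ell\neq p$, $\mathcal{U}$-invariants and $\mathcal{U}$-coinvariants agree on $R[\mathcal{G}_k]$-$\textbf{Mod}$, so $r_{\mathcal{L}}$ is simultaneously a left and a right adjoint of $i_{\mathcal{L}}^{\mathcal{G}_k}$. The ensuing isomorphism
\[
\operatorname{Hom}_{\mathcal{L}}\bigl(\sigma,\,r_{\mathcal{L}}(P_0^*)\bigr)\cong\operatorname{Hom}_{\mathcal{G}_k}\bigl(i_{\mathcal{L}}^{\mathcal{G}_k}(\sigma),\,P_0^*\bigr),
\]
together with the dual multiplicity formula $\dim_R\operatorname{Hom}_{\mathcal{G}_k}(X,P_0^*)=[X:\rho_0^*]$ (valid because $P_0^*$ is the injective envelope of the simple $\rho_0^*$), reduces the problem to $[i_{\mathcal{L}}^{\mathcal{G}_k}(\sigma):\rho_0^*]=0$. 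This is the supercuspidality of $\rho_0^*$, which is inherited from that of $\rho_0$ through the standard compatibility $i_{\mathcal{L}}^{\mathcal{G}_k}(\sigma)^*\cong i_{\mathcal{L}}^{\mathcal{G}_k}(\sigma^*)$ of parabolic induction with contragredients.

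The one delicate step is the treatment of the second summand $P_0^*$: it is injective and not projective, so the direct projective-cover argument that handles $P_0$ has to be replaced by its dualisation, and this dualisation crucially depends both on the two-sided adjointness of $r_{\mathcal{L}}$ and $i_{\mathcal{L}}^{\mathcal{G}_k}$ (itself only a feature of the non-defining setting) and on the compatibility of contragredients with parabolic induction. Once these formal inputs are in place, the proof is essentially forced by the supercuspidality of $\rho_0$, and I do not anticipate any deeper obstruction.
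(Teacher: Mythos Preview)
Your argument is correct and your reduction to $P_0$ and $P_0^*$ via the product decomposition of $\mathcal{M}$ is fine, but the route is more elaborate than the paper's. The paper observes in one line that $P$ lies in the supercuspidal block of $\rho$, so every composition factor of $P$ is isomorphic to $\rho$; exactness of the Jacquet functor then forces $r_{\mathcal{L}}(P)=0$. For $P^*$ the paper simply cites \cite{BDC}, Lemma~2.2e, i.e.\ the compatibility of Harish--Chandra functors with contragredients. Your Frobenius-reciprocity computation with the multiplicity formula for projective covers is equivalent to the block argument but less direct. More to the point, the step you flag as ``delicate'' is not: the group algebra $R[\mathcal{G}_k]$ is symmetric, so projectives and injectives coincide and the dual of a projective is projective. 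Hence $P_0^*$ \emph{is} projective---in fact it is the projective cover of $\rho_0^*$---and the very argument you gave for $P_0$ applies verbatim to $P_0^*$. Your detour through second adjunction and the injective-envelope multiplicity formula is valid but unnecessary; the claim that $P_0^*$ is ``not projective'' is incorrect.
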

\begin{proof}
The statement is certainly true for $P$ in place of $V$, as the projective 
cover is formed in the (supercuspidal) block associated to $\rho$, so all 
subquotients of $P$ are isomorphic to $\rho$, 
i. e. are cuspidal. That this property is not disturbed by adding
the contragredient follows from \cite{BDC}, 2.2e Lemma.
\end{proof}
Let $[M,\pi]$ be the supercuspidal pair associated to the type $(\mathscr{P},\rho)$ (cf. \cite{guiraud}, 2.2).
\begin{Thm}\label{progenerator}
The parahorically induced $\mathscr{V} = \operatorname{ind}_{\mathscr{P}}^G(V)$ is a pro-generator of the block
$\mathfrak{R}^{[M, \pi]_G}(G)$.
\end{Thm}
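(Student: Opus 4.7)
My plan is to verify the three defining properties of a pro-generator of $\mathfrak{R}^{[M,\pi]_G}(G)$: (i) $\mathscr{V}$ is projective in $\mathfrak{R}_R(G)$, (ii) the block-component of $\mathscr{V}$ in $\mathfrak{R}^{[M,\pi]_G}(G)$ is nonzero, and (iii) $\operatorname{Hom}_G(\mathscr{V},X)\ne 0$ for every nonzero $X$ in the block. The first two mimic the banal example in the introduction; the third is where the symmetrization $V=P\oplus P^*$ is essential and where the main obstacle lies.

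For (i), $R[\mathcal{M}]$ is a symmetric algebra (being the group algebra of a finite group), so the class of projective modules is closed under contragredient and $V=P\oplus P^*$ is a projective $\mathcal{M}$-module. Since $\ell\neq p$ the functor of $\mathscr{P}(1)$-invariants is exact, so its left adjoint $\operatorname{ind}_{\mathscr{P}}^G$ composed with inflation preserves projectivity, giving $\mathscr{V}$ projective in $\mathfrak{R}_R(G)$. For (ii), the summand $P$ of $V$ has JH-constituent $\rho$, and the parahoric induction $\operatorname{ind}_{\mathscr{P}}^G(\rho)$ contains an irreducible subquotient whose supercuspidal support lies in $[M,\pi_0\boxtimes\pi_0]_G=[M,\pi]_G$ (by the standard type-theoretic translation, as in chapter 2.2 of \cite{guiraud}), so the block-projection of $\mathscr{V}$ is nonzero.

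The crux is (iii). Using that $\mathscr{V}$ is projective and the standard cyclic-submodule reduction, it suffices to prove $\operatorname{Hom}_G(\mathscr{V},Y)\ne 0$ for every irreducible $Y$ in the block: once this is known, any nonzero $X$ admits an irreducible quotient $Y$ of a cyclic submodule $X_0\subset X$, and a nonzero map $\mathscr{V}\to Y$ lifts through the projective $\mathscr{V}$ to a nonzero map $\mathscr{V}\to X_0\hookrightarrow X$. Frobenius reciprocity, together with the fact that $V$ factors through $\mathscr{P}/\mathscr{P}(1)=\mathcal{M}$, yields
\[
\operatorname{Hom}_G(\mathscr{V},Y)\;\cong\;\operatorname{Hom}_{\mathcal{M}}(P,Y^{\mathscr{P}(1)})\;\oplus\;\operatorname{Hom}_{\mathcal{M}}(P^*,Y^{\mathscr{P}(1)}).
\]
Since $R[\mathcal{M}]$ is symmetric, $P$ has top $\rho$ and $P^*$ has top $\rho^*$, so the problem reduces to showing that the nonzero $\mathcal{M}$-module $Y^{\mathscr{P}(1)}$ admits $\rho$ or $\rho^*$ as a quotient. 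I expect this to be the principal obstacle. The plan is to invoke level-zero type theory (chapter 2.2 of \cite{guiraud}, in the spirit of Bushnell--Kutzko/Morris) to constrain the possible simple $\mathcal{M}$-quotients of $Y^{\mathscr{P}(1)}$: the $G$-inertial class $[M,\pi]_G$ forces the $\mathcal{M}$-blocks of those quotients into the list tied to the supercuspidal support, and since $\rho_0$ is supercuspidal the block of $\rho$ in $R[\mathcal{M}]$ consists only of $\{\rho\}$ (as used in the preceding proposition) and analogously the contragredient side yields $\{\rho^*\}$. Including the summand $P^*$ is precisely what is needed to catch those irreducibles $Y$ whose level-zero structure presents $\rho^*$ rather than $\rho$ at the top of $Y^{\mathscr{P}(1)}$; this dichotomy, which is invisible in the banal setting where $V=P=\rho$ suffices, is the reason for the symmetrization in the definition of $V$.
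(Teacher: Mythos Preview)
The overall architecture (projectivity, reduction to irreducibles, Frobenius reciprocity) is sound and parallels the paper, but step~(iii) has two related problems.

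First, a technical slip: since $P$ is \emph{projective}, $\operatorname{Hom}_{\mathcal{M}}(P,M)\neq 0$ is equivalent to $\rho$ occurring as a \emph{composition factor} of $M$, not as a quotient. So your reduction to ``$\rho$ or $\rho^*$ is a quotient of $Y^{\mathscr{P}(1)}$'' is unnecessarily strong, and your proposed $\rho$--versus--$\rho^*$ dichotomy is not what forces the presence of $P^*$. In the paper, $P^*$ enters for a different reason: to produce an explicit surjection onto the \emph{sub}object of $i_{\mathcal{M}}^{\mathcal{G}}(\rho)$ by dualizing a short exact sequence (using that irreducible $\mathcal{G}$-representations are self-contragredient), not to distinguish $\rho$ from $\rho^*$ at the $\mathcal{M}$-level.

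Second, and more seriously, you have not actually argued that $Y^{\mathscr{P}(1)}$ is nonzero with $\rho$ as a constituent; ``invoke level-zero type theory'' is a plan, not a proof, and it is exactly here that the non-banal difficulty sits. The paper closes this gap by inserting one more transitivity/Frobenius step through the maximal compact $\mathscr{K}=\GL_n(\mathcal{O}_F)$, obtaining
\[
\operatorname{Hom}_G(\mathscr{V},W)\;=\;\operatorname{Hom}_{\mathcal{G}}\!\bigl(i_{\mathcal{M}}^{\mathcal{G}}(V),\,W^{\mathscr{K}}\bigr),
\]
and then citing \cite{guiraud}, 2.16 to reduce to producing a nonzero map $i_{\mathcal{M}}^{\mathcal{G}}(V)\to X$ for every irreducible subquotient $X$ of $i_{\mathcal{M}}^{\mathcal{G}}(\rho)$. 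The structure of this finite-group induction is then taken from \cite{BDC}, 2.4a: it has exactly two Jordan--H\"older constituents, sitting in a sequence $0\to Y\to i_{\mathcal{M}}^{\mathcal{G}}(\rho)\to Z\to 0$. The quotient $Z$ is reached from $i_{\mathcal{M}}^{\mathcal{G}}(P)$ via $P\twoheadrightarrow\rho$; the sub $Y$ is reached from $i_{\mathcal{M}}^{\mathcal{G}}(P^*)$ after dualizing the sequence and using $P^*\twoheadrightarrow\rho^*$. These three ingredients---the passage to $\mathcal{G}$ rather than $\mathcal{M}$, the citation of \cite{guiraud}~2.16, and the length-two structure from \cite{BDC}---are precisely what your sketch lacks.
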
 
\begin{proof}
It is clear that $\mathscr{V}$ is finitely generated and projective, as these properties are respected by
parahoric induction (and as by \cite{VigLivre}, the projective cover is finite-length, hence finitely generated).
Now, by Frobenius reciprocity and transitivity of parahoric induction, we have
\begin{equation*}
\operatorname{Hom}_G(\mathscr{V}, W) = 
\operatorname{Hom}_{\mathcal{G}}(\textit{i}_{\mathcal{M}}^{\mathcal{G}}(V), W^{\mathscr{K}})
\end{equation*}
for any non-zero $W\in \mathfrak{R}^{[M, \pi]}(G)$, 
where $\mathscr{K}=\GL_n(\mathcal{O}_F)$ denotes the maximal compact subgroup 
and we can assume that $W$ is irreducible.
%where we can assume $W$ to be irreducible and 
%$\mathscr{K}$ to denote the maximal compact subgroup $\GL_n(\mathcal{O}_F)$.
As $W$ is level-$0$, we have $W^{\mathscr{K}} \neq 0$. Thus, in order to show that the above Hom-set
does not vanish, it is sufficient (by \cite{guiraud}, 2.16) to show that there exists a non-zero map 
\begin{equation*}
\textit{i}_{\mathcal{M}}^{\mathcal{G}}(V) \rightarrow X
\end{equation*}
for any subquotient $X$ of $\textit{i}_{\mathcal{M}}^{\mathcal{G}}(\rho)$.
By (\cite{BDC}, 2.4a), this representation has precisely two subquotients, each one occurring with
multiplicity $1$. So we can write 
\begin{equation*}
0 \rightarrow Y \rightarrow 
\textit{i}_{\mathcal{M}}^{\mathcal{G}}(\rho) \rightarrow Z 
\rightarrow 0.
\end{equation*}
Now, as $P\twoheadrightarrow \rho$ and Harish-Chandra induction is exact and respects direct sums, we get a surjection
$\textit{i}_{\mathcal{M}}^{\mathcal{G}}(V) \twoheadrightarrow \rho$. Composing this with the 
projection from the short exact sequence, we get a non-zero map if $X=Z$.\\
For $X=Y$, we use that taking the contragredient is an exact contravariant functor, so there is a projection
$\textit{i}_{\mathcal{M}}^{\mathcal{G}}((\rho)^{\ast})\cong  \textit{i}_{\mathcal{M}}^{\mathcal{G}}(\rho)^{\ast} 
\twoheadrightarrow Y$ (the isomorphism is \cite{BDC}, 2.2e; note that we have used here that an irreducible
representation is isomorphic to its contragredient, as mentioned in the remarks before Lemma 2.2e in 
\cite{BDC}).
By \cite{VigLivre}, III.2.9 Theoreme 2.b, all irreducible constituents of $P$ are of
the form $\rho$, i. e. $\rho \hookrightarrow P$. But this
implies the existence of a surjection $P^{\ast}\twoheadrightarrow (\rho)^{\ast}$. This can again be induced and composed
with the map above to finish off the case $X=Y$. 
%(Note that we have used in this proof that the an irreducible
%representation is isomorphic to its contragredient, as mentioned in the remarks before Lemma 2.2e in 
\cite{BDC}.)
\end{proof}
%{\color{green} TODO: Should mention at the end of the proof that $Y=Y^{\ast}$ by (BDK, remark before Lemma 2.2e)..}

\section{Functional Hecke-algebras}\label{funcHecke}
We use the notation from above, except that we allow
$(V,\sigma)$ to be any finite-dimensional
$\mathcal{M}$-representation which we assume to be cuspidal (in the sense that all 
non-trivial Harish-Chandra functors vanish on it) but not necessarily irreducible. 
We will not distinguish between $V$ and its inflation along $\mathscr{P}(1)$ to a representation
of $\mathscr{P}$. Our aim is to study the $R$-algebra
\begin{equation*}
\mathscr{H}_R(G, \mathscr{P}, V) = 
\operatorname{End}_G \left( \operatorname{ind}_{\mathscr{P}}^G(V) \right).
\end{equation*}
Following \cite{VigLivre}, I.8.5, this algebra can be characterized as the convolution algebra of functions
\begin{equation*}
\varphi: G \rightarrow \operatorname{End}_R(V)
\end{equation*}
supported on a finite number of double cosets $\mathscr{P}g\mathscr{P}$ such that 
\begin{equation}\label{HeckeChar}
\varphi(p_1gp_2) = \sigma(p_1)\varphi(g)\sigma(p_2)\qquad \text{ for all }g\in G, p_1, p_2 \in \mathscr{P}.
\end{equation}
If $\mathscr{H}_R(\mathscr{P}g\mathscr{P}, \mathscr{P}, V)\subset \mathscr{H}_R(G, \mathscr{P}, V)$ denotes the subspace of functions
supported on the single coset $\mathscr{P}g\mathscr{P}$, we have a vector space decomposition
\begin{equation*}
\mathscr{H}_R(G, \mathscr{P}, V) = \bigoplus_{\eta\in \tilde{\mathscr{W}}}\mathscr{H}_R(\mathscr{P}\eta \mathscr{P}, \mathscr{P}, V),
\end{equation*}
where $\tilde{\mathscr{W}}$ denotes a set of representatives for $\mathscr{P}\backslash G /\mathscr{P}$. With $\varpi = \varpi_F$, let
\begin{equation*}
\mathscr{W} = \left\{\text{ monomial $2k\times 2k$-matrices with entries in $\varpi^{\mathbb{Z}}$}\right\}
\end{equation*}
be the affine Weyl group of $G$. Then, according to e. g. \cite{vigneras2003schur}, p. 43, we can 
take as $\tilde{\mathscr{W}}$ a set of representatives of $Y\backslash \mathscr{W}/Y$, where
$Y\cong S_k\times S_k$ denotes the Young subgroup associated to the partition $(k,k)$ of $n=2k$.
This means we can take
\begin{equation*}
\tilde{\mathscr{W}} = \left\{ v.\operatorname{diag}(\varpi^{a_1}, \ldots, \varpi^{a_{2k}})\;\bigl|\;
a_i\in \mathbb{Z}, v\in \mathbb{W}\right\},
\end{equation*}
where $\mathbb{W}$ is a fixed set of representatives for $Y\backslash S_{n}/Y$, which we assume for 
convenience to include $1 = \operatorname{diag}(1, \ldots, 1)$ and $w = (1, k+1)(2, k+2) \ldots (k, 2k)$, or,
in matrix notation,
\begin{equation*}
w = \left( \begin{matrix}
& \begin{smallmatrix} 1 &&\\ & \ddots\\ &&1\end{smallmatrix}\\
\begin{smallmatrix} 1 &&\\ & \ddots\\ &&1\end{smallmatrix}
\end{matrix}\right).
\end{equation*} 
Denote
\begin{equation*}
W = \left\{v.\operatorname{diag}(\varpi^{a_1}, \ldots, \varpi^{a_{2k}})\in \tilde{\mathscr{W}}
\;\bigl|\; a_1 = a_2 = \ldots = a_k, a_{k+1} = a_{k+2} = \ldots = a_{2k}, v\in \{1, w\}\right\}
\end{equation*}
and remark that this forms a subgroup of $\mathscr{W}$ isomorphic to the affine Weyl group of 
$\GL_2(F)$.
\begin{Lem}
The support of $\mathscr{H}_R(G, \mathscr{P}, V)$ is contained in $\mathscr{P}W\mathscr{P}$, i. e. 
\begin{equation*}
\mathscr{H}_R(\mathscr{P}\eta \mathscr{P}, \mathscr{P}, V) \neq 0 \text{ and } \eta\in \tilde{\mathscr{W}} \Rightarrow
\eta \in W.
\end{equation*}
\end{Lem}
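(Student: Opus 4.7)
The strategy is to use cuspidality of $V$ to rule out non-zero intertwiners at $\eta$ outside $W$.

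By (\ref{HeckeChar}), any $\varphi \in \mathscr{H}_R(\mathscr{P}\eta\mathscr{P}, \mathscr{P}, V)$ is determined by $T := \varphi(\eta) \in \operatorname{End}_R(V)$, and (\ref{HeckeChar}) reduces to $\sigma(p)\, T = T\, \sigma(\eta^{-1}p\eta)$ for all $p$ in $H_\eta := \mathscr{P} \cap \eta\mathscr{P}\eta^{-1}$. So $T$ is an $H_\eta$-intertwiner from $V$ to the twist ${}^\eta V$, and I need to show that no non-zero such $T$ exists when $\eta \in \tilde{\mathscr{W}} \setminus W$.

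I would introduce two unipotent subgroups of $\mathcal{M}$ built from the intersection: let $N^+ \subset \mathcal{M}$ be the image of $\eta^{-1}(H_\eta \cap \mathscr{P}(1))\eta$ under $\mathscr{P} \twoheadrightarrow \mathcal{M}$, and $N^- \subset \mathcal{M}$ the image of $H_\eta \cap \eta\mathscr{P}(1)\eta^{-1}$. For $p \in H_\eta \cap \mathscr{P}(1)$ we have $\sigma(p) = 1$, so the intertwining relation forces $T$ to factor through the $N^+$-coinvariants $V_{N^+}$; dually, the relation forces $\operatorname{im}(T) \subseteq V^{N^-}$. Cuspidality gives $V_U = 0$ for the unipotent radical $U$ of any proper parabolic of $\mathcal{M}$ and, via the self-duality of cuspidal representations and \cite{BDC}, 2.2e, also $V^U = 0$. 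Hence if $N^+$ (resp.\ $N^-$) contains such a $U$, then $T$ vanishes.

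The remaining combinatorial step is to show that for $\eta = v \cdot \operatorname{diag}(\varpi^{a_1}, \ldots, \varpi^{a_{2k}}) \in \tilde{\mathscr{W}} \setminus W$, at least one of $N^{\pm}$ contains the unipotent radical of a proper parabolic of $\mathcal{M}$. Using the explicit formula $\eta^{-1} U_{ij}(\varpi^m \mathcal{O})\eta = U_{v^{-1}(i),v^{-1}(j)}(\varpi^{m + a_{v^{-1}(j)} - a_{v^{-1}(i)}} \mathcal{O})$, one expresses $N^{\pm}$ as the group generated by those root subgroups $U_\alpha^{\mathcal{M}}$ with $\alpha$ a root of $\mathcal{M}$ satisfying certain inequalities among the $a_i$'s and the sign of $v(\alpha)$. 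Two cases arise: if $(a_i)$ is not constant on one of the two blocks, then a full ``half'' of intra-block root subgroups of that block lands in $N^{\pm}$, producing the unipotent radical of a proper parabolic of the corresponding $\mathcal{G}_k$-factor; if $v \notin \{1, w\}$ modulo $Y$, then $v$ sends some intra-block pair of indices to an inter-block pair, and the same formula sweeps the affected intra-block root subgroups into $N^{\pm}$ so as to complete another unipotent radical.

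The main obstacle is verifying this last combinatorial step in full generality --- in particular, confirming that the root subgroups so produced do not merely form a single $U_\alpha^{\mathcal{M}}$ but the full unipotent radical of some proper parabolic of $\mathcal{M}$, which is what the cuspidality argument requires in order to kill $T$. Since $\mathcal{M} = \mathcal{G}_k \times \mathcal{G}_k$ has only finitely many types of proper parabolics (products of parabolics of the two $\GL_k$ factors), this should reduce to a finite but non-trivial case analysis guided by the chosen normal form of $\tilde{\mathscr{W}}$.
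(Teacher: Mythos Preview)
Your outline is correct and is precisely the argument of Howe--Moy \cite{Howe}, Lemma~1.1, which is all the paper invokes here; the paper gives no independent proof beyond remarking that the cited argument uses only cuspidality, not irreducibility --- exactly the feature your proposal exploits.

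On the combinatorial step you flag as the obstacle: it resolves once you normalise representatives. Since $Y=S_k\times S_k$ permutes indices within each block, you may arrange the $a_i$ to be weakly increasing in each block. If they are not constant on, say, the first block, choose $r$ with $a_r<a_{r+1}$; then for all $1\le i\le r<j\le k$ one has $a_i<a_j$, and your conjugation formula places $U_{ij}^{\mathcal M}$ inside $N^-$. These root subgroups are exactly the unipotent radical of the standard parabolic of the first $\mathcal G_k$-factor with Levi $\mathcal G_r\times\mathcal G_{k-r}$, so cuspidality applies. The case $v\notin\{1,w\}$ (mod~$Y$) is similar, using that a distinguished representative $v$ sends some but not all of $\{1,\dots,k\}$ into $\{k+1,\dots,2k\}$. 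One small simplification: every unipotent $U\subset\mathcal M$ is a $p$-group and $\ell\ne p$, so the averaging idempotent gives $V^U\cong V_U$ directly; you do not need self-duality or \cite{BDC}, 2.2e for the passage from $V_U=0$ to $V^U=0$.
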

\begin{proof}
This is Lemma 1.1 (p. 15) in \cite{Howe}. (The representation there is irreducible, but the argument
only needs cuspidality.)
\end{proof}
Now, if $\eta = v.\operatorname{diag}(\varpi^{a_1}, \ldots, \varpi^{a_{2k}})$, write 
$\delta = a_1 - a_{k+1}$. Moreover, abbreviate $\mathcal{O} = \mathcal{O}_F$ and 
$\mathfrak{P} = \varpi.\mathcal{O}$.
\begin{Lem} Denote $\mathscr{P}^{(\eta)} = \mathscr{P}\cap \eta \mathscr{P}\eta^{-1}$, then we have
\begin{itemize}
\item \begin{equation*}
\mathscr{P}^{(\eta)} = \left(\begin{smallmatrix}
\GL_k(\mathcal{O}) & \mathbb{M}_{k\times k}(\varpi^{\delta}\mathcal{O})\\
\mathbb{M}_{k\times k}(\mathfrak{P}) & \GL_k(\mathcal{O})
\end{smallmatrix}\right) \qquad \text{if $v = 1$ and $\delta \geq 0$};
\end{equation*}
\item \begin{equation*}
\mathscr{P}^{(\eta)} = \left(\begin{smallmatrix}
\GL_k(\mathcal{O}) & \mathbb{M}_{k\times k}(\mathcal{O})\\
\mathbb{M}_{k\times k}(\varpi^{1-\delta}\mathfrak{P}) & \GL_k(\mathcal{O})
\end{smallmatrix}\right) \qquad \text{if $v = 1$ and $\delta < 0$};
\end{equation*}
\item \begin{equation*}
\mathscr{P}^{(\eta)} = \left(\begin{smallmatrix}
\GL_k(\mathcal{O}) & \mathbb{M}_{k\times k}(\varpi^{1-\delta}\mathcal{O})\\
\mathbb{M}_{k\times k}(\mathfrak{P}) & \GL_k(\mathcal{O})
\end{smallmatrix}\right) \qquad \text{if $v = w$ and $\delta \leq 0$};
\end{equation*}
\item \begin{equation*}
\mathscr{P}^{(\eta)} = \left(\begin{smallmatrix}
\GL_k(\mathcal{O}) & \mathbb{M}_{k\times k}(\mathcal{O})\\
\mathbb{M}_{k\times k}(\mathfrak{P}^{\delta}) & \GL_k(\mathcal{O})
\end{smallmatrix}\right) \qquad \text{if $v = w$ and $\delta > 0$}.
\end{equation*}
\end{itemize}
\end{Lem}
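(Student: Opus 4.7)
The proof is a direct block-matrix computation, so the plan is to organize it cleanly rather than to find a clever argument. First I would record the block description
\begin{equation*}
\mathscr{P} = \left(\begin{smallmatrix}
\GL_k(\mathcal{O}) & \mathbb{M}_{k\times k}(\mathcal{O})\\
\mathbb{M}_{k\times k}(\mathfrak{P}) & \GL_k(\mathcal{O})
\end{smallmatrix}\right),
\end{equation*}
set $a = a_1 = \cdots = a_k$ and $b = a_{k+1} = \cdots = a_{2k}$, so that $\delta = a-b$, and observe that only the difference $\delta$ matters, since scalar multiplication by $\varpi^{b}$ commutes with everything. Thus I can replace $\eta$ by $v\cdot\operatorname{diag}(\varpi^{\delta}I_k, I_k)$ without loss of generality.

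Next I would split into the two cases $v=1$ and $v=w$ and compute $\eta g \eta^{-1}$ for a general $g=\left(\begin{smallmatrix}A&B\\C&D\end{smallmatrix}\right)\in \mathscr{P}$. For $v=1$ this yields $\left(\begin{smallmatrix}A&\varpi^{\delta}B\\\varpi^{-\delta}C&D\end{smallmatrix}\right)$, so
\begin{equation*}
\eta\mathscr{P}\eta^{-1} = \left(\begin{smallmatrix}
\GL_k(\mathcal{O}) & \mathbb{M}_{k\times k}(\varpi^{\delta}\mathcal{O})\\
\mathbb{M}_{k\times k}(\varpi^{1-\delta}\mathcal{O}) & \GL_k(\mathcal{O})
\end{smallmatrix}\right).
\end{equation*}
For $v=w=\left(\begin{smallmatrix}0&I_k\\I_k&0\end{smallmatrix}\right)$ the same calculation (with the diagonal blocks swapped) gives
\begin{equation*}
\eta\mathscr{P}\eta^{-1} = \left(\begin{smallmatrix}
\GL_k(\mathcal{O}) & \mathbb{M}_{k\times k}(\varpi^{1-\delta}\mathcal{O})\\
\mathbb{M}_{k\times k}(\varpi^{\delta}\mathcal{O}) & \GL_k(\mathcal{O})
\end{smallmatrix}\right).
\end{equation*}

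Intersecting with $\mathscr{P}$ then amounts, block by block, to taking the maximum of the two valuation-constraints on the off-diagonal entries (the diagonal blocks are $\GL_k(\mathcal{O})$ in both $\mathscr{P}$ and $\eta\mathscr{P}\eta^{-1}$, so they are unaffected). For $v=1$ the upper-right block carries exponent $\max(0,\delta)$ and the lower-left block carries exponent $\max(1,1-\delta)$; splitting according to the sign of $\delta$ gives the first two cases of the lemma. For $v=w$ the upper-right exponent is $\max(0,1-\delta)$ and the lower-left exponent is $\max(1,\delta)$; the split $\delta\leq 0$ versus $\delta>0$ gives the last two cases, with $\mathbb{M}_{k\times k}(\mathfrak{P}^{\delta})=\mathbb{M}_{k\times k}(\varpi^{\delta}\mathcal{O})$ in the final case.

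There is really no hard step here: the only mild subtlety is remembering that the lower-left block of $\mathscr{P}$ lives in $\mathfrak{P}$ rather than $\mathcal{O}$, which is what produces the asymmetry between the two off-diagonal exponents (and hence the slightly different thresholds $\delta\geq 0$ versus $\delta\leq 0$ in the two cases). Everything else is bookkeeping of $\max$'s.
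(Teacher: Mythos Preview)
Your proposal is correct and is exactly the approach the paper takes: the paper's own proof is the single phrase ``Straight-forward matrix calculation.'' Your explicit computation in fact shows that the lower-left block in the second bullet should read $\mathbb{M}_{k\times k}(\varpi^{-\delta}\mathfrak{P})=\mathbb{M}_{k\times k}(\varpi^{1-\delta}\mathcal{O})$ rather than $\mathbb{M}_{k\times k}(\varpi^{1-\delta}\mathfrak{P})$ as printed --- a harmless typo in the statement that does not affect anything later in the paper.
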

\begin{proof}
Straight-forward matrix calculation.
\end{proof}
We easily get an identification
\begin{equation*}
\mathscr{H}_R(\mathscr{P}\eta \mathscr{P}, \mathscr{P}, V) = \left\{ \psi: V\rightarrow V \;\Bigl|\; \psi \circ\sigma(x) = 
\sigma(\eta x \eta^{-1})\circ \psi \text{ for all }x\in \mathscr{P}^{(\eta^{-1})}\right\}
\end{equation*}
\begin{equation*}
\cong
\begin{cases}
I_1 =\operatorname{End}_{\mathcal{M}}(V) & \text{ if $\eta$ diagonal;}\\
I_w = \operatorname{Hom}_{\mathcal{M}}(V^w, V) \cong \operatorname{Hom}_{\mathcal{M}}(V, V^w) & \text{ otherwise;}
\end{cases}
\end{equation*}
where $V^w$ denotes the $w$-conjugate representation of $V$, i. e. we let $m\in\mathcal{M}$ act on $v\in V$ as $(wmw^{-1}).v$.
We will consequently use the notation $[\eta]_f$ for the element of $\mathscr{H}_R(P\eta P, P, V)$
with $\varphi(\eta) = \psi = f$, where $f$ is an element of $I_1$ or $I_w$ depending on whether $\eta$ is
diagonal or not.\\
Fix the following notation, which is clearly motivated by the isomorphism of $W$ with the affine Weyl group of $\GL_2(F)$:
\begin{itemize}
\item $t = \left(\begin{matrix} & \begin{smallmatrix}
1 &&\\
& \ddots \\
&&1
\end{smallmatrix}
\\
\begin{smallmatrix}
\varpi &&\\
& \ddots \\
&&\varpi
\end{smallmatrix}\end{matrix}\right)$;
\item $w' = twt^{-1}$;
\item $W_0 = \{x \in W \;|\; \operatorname{det}(x) = \pm 1\}$.
\end{itemize}
Using this isomorphism, $W$ is an affine Coxeter group with length function $l$
satisfying $l(w) = l(w') = 1, l(t)=0$. On the other hand, $\mathscr{W}$ is also
an affine Coxeter group equiped with a length function $l'$. 
\begin{Prop}
$l'|W$ is a constant multiple of $l$.
\end{Prop}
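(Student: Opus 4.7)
The plan is to establish $l'(x) = k^2 \cdot l(x)$ for every $x \in W$, where the constant $k^2$ equals the $l'$-length of the block-permutation $w$.

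First, I would verify the equality on the generators $w, w', t$. For $w$: since $w = (1, k+1)(2, k+2)\cdots(k, 2k)$ is a pure permutation (no translation part), its length in $\mathscr{W}$ equals its inversion number in $S_{2k}$; all inversions are contributed by the $k^2$ pairs $(i, j)$ with $i \leq k < j$, giving $l'(w) = k^2$. For $t$: the identity $t^2 = \operatorname{diag}(\varpi, \varpi, \ldots, \varpi)$ shows that $t^2$ is central in $\mathscr{W}$, hence $t$ modulo the center has order two, and it corresponds to $\tau^k$ where $\tau$ generates the subgroup $\Omega \cong \mathbb{Z}/2k$ of Dynkin-diagram rotations of the affine Coxeter group underlying $\mathscr{W}$ (modulo center). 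Since elements of $\Omega$ stabilize the fundamental alcove of $\mathscr{W}$ (permuting its vertices cyclically), they are length-zero; thus $l'(t) = 0$. From $w' = twt^{-1}$ and the conjugation-invariance of length under length-zero elements, we then obtain $l'(w') = l'(w) = k^2$, matching $l(w) = l(w') = 1$.

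Second, we extend to all of $W$ using its affine Coxeter structure. Since $W \cong \tilde{W}_{\mathrm{GL}_2(F)}$ is generated by the simple reflections $w, w'$ (of $l$-length $1$) and the length-zero rotation $t$, every element admits a unique normal form $x = g \cdot t^n$ where $g$ is a reduced alternating word in $\{w, w'\}$ and $l(x) = l(g)$. By induction on $l(g)$, using that both simple reflections $w$ and $w'$ have $l'$-length $k^2$, reduced $\{w, w'\}$-expressions in $W$ remain $l'$-reduced when viewed in $\mathscr{W}$, so $l'(x) = l'(g) = k^2 \cdot l(g) = k^2 \cdot l(x)$.

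The main obstacle is verifying the preservation of reducedness in the last step — that is, showing that no length-collapse of the form $l'(uv) < l'(u) + l'(v)$ can occur when $u, v$ are written as reduced $\{w, w'\}$-expressions in $W$. This amounts to identifying $W$ with the setwise stabilizer (modulo $\mathscr{P}$) of a convex two-dimensional subapartment $\mathcal{A}_W$ of the Bruhat–Tits apartment of $G$, and observing that the walls of $\mathscr{W}$ restricted to $\mathcal{A}_W$ come in $k^2$-fold bundles, each bundle corresponding to a single wall of $W$ (the $k^2$ hyperplanes $x_i - x_j = m$ with $i \leq k < j$ all restrict to the same affine hyperplane in $\mathcal{A}_W$). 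This geometric picture simultaneously justifies the length-zero status of $t$ and the preservation-of-reducedness under the embedding.
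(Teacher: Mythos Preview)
The paper does not give its own argument here: it simply cites (5.6.14) of Bushnell--Kutzko, adding a remark on the constant. Your proposal is therefore a genuinely different, self-contained route.

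Your computations on the generators are correct. The inversion count for $w$ gives $l'(w)=k^2$, and the identification $t=\Pi^k$ (with $\Pi$ the standard length-zero generator of $\Omega$ in $\mathscr{W}$) is right, so $l'(t)=0$ and $l'(w')=l'(twt^{-1})=k^2$. This constant $k^2$ is also the one the paper uses implicitly later (see the computation $\#\mathscr{P}w\mathscr{P}/\mathscr{P}=q^{k^2}$ combined with $\#\mathscr{P}x\mathscr{P}/\mathscr{P}=q^{l'(x)}$).

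The only point that deserves more than a sketch is your final paragraph. Your geometric picture is correct: the sub-apartment $\mathcal{A}_W=\{(a_1,\ldots,a_{2k}):a_1=\cdots=a_k,\ a_{k+1}=\cdots=a_{2k}\}$ is crossed non-trivially only by the $\mathscr{W}$-walls $a_i-a_j=m$ with $i\le k<j$, and for each fixed $m$ there are exactly $k^2$ of them, all restricting to the single $W$-wall $a_1-a_{k+1}=m$. Since length equals the number of walls separating the base alcove from its image, this immediately gives $l'(x)=k^2\,l(x)$ for all $x\in W$, and in particular shows that $W$-reduced words stay $\mathscr{W}$-reduced. If you prefer to avoid the building, an equivalent algebraic check is to compute the translation $(ww')^m=\operatorname{diag}(\varpi^{m},\ldots,\varpi^{m},\varpi^{-m},\ldots,\varpi^{-m})$ and use the standard translation-length formula $\sum_{i<j}|a_i-a_j|=2mk^2$; together with $l'(w)=k^2$ this forces additivity along every alternating word in $w,w'$.

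In short: your argument is sound and more elementary than the paper's citation; just make the wall-counting (or the translation-length) step explicit rather than leaving it as a sketch.
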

\begin{proof}
This is (5.6.14) (p. 192) of \cite{bushnell1993admissible}.
Notice that there it is written that $l'|W = n.l$, but this is not correct and it should be $l'|W = n^2.l$.
\end{proof}
Of fundamental importance is the operator
\begin{equation*}
T^{\ast} = \sum_{g\in \GL_k(q)} \left( \begin{smallmatrix} g & \\
 & -g^{-1} \end{smallmatrix}\right) \in R[\mathcal{M}] \subset R[\mathcal{G}],
\end{equation*}
which already appears in \cite{Howe}, p. 10, (5.13). $T^{\ast}$ lies in $I_w$ but generally not in 
$I_1$, as a quick calculation reveals.
\begin{Prop}
\begin{enumerate}[label={\upshape(\roman*)}]
\item If $\ell$ divides $q-1$, then $(T^{\ast})^2 = 0$.
\item If $f$ is in $I_1$ (resp. $I_w$), then $T^{\ast}f = fT^{\ast}$ in $I_w$ (resp. $I_1$).
\end{enumerate}
\end{Prop}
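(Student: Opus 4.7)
For part (ii), I handle the two cases separately. When $f \in I_1 = \operatorname{End}_{\mathcal{M}}(V)$, the map $f$ commutes with $\pi(m)$ for every $m \in \mathcal{M}$ by definition, and since $T^{\ast}$ is a sum of such $\pi(m)$'s inside $R[\mathcal{M}]$, one gets $fT^{\ast} = T^{\ast}f$ immediately; a short verification shows this common value satisfies the $w$-twisted intertwining property and so lies in $I_w$. When $f \in I_w$, the defining relation $f\pi(m) = \pi(wmw^{-1})f$ applied to $m = (g, -g^{-1})$ (which $w$ conjugates to $(-g^{-1}, g)$) and summed over $g \in \GL_k(q)$ gives $fT^{\ast} = \sum_g \pi((-g^{-1}, g))f$; reindexing via the involution $g \mapsto -g^{-1}$ of $\GL_k(q)$ turns the right-hand side back into $T^{\ast}f$, proving the equality in $I_1$.

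For part (i), my strategy is to first absorb the sign: I write $T^{\ast} = z\cdot \widetilde{T}$, with $z = (1, -I) \in R[\mathcal{M}]$ central and $\widetilde{T} = \sum_{g \in \GL_k(q)}(g, g^{-1})$. Because $z^2 = 1$, this reduces the claim to $\widetilde{T}^2 = 0$. Expanding
\begin{equation*}
\widetilde{T}^2 = \sum_{g, h \in \GL_k(q)}(gh, g^{-1}h^{-1})
\end{equation*}
and reindexing by $x = gh$ (so that $g^{-1}h^{-1} = g^{-1}x^{-1}g$) yields $\widetilde{T}^2 = \sum_{x, g \in \GL_k(q)}(x, g^{-1}x^{-1}g)$. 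The coefficient of a fixed basis element $(x, y) \in \mathcal{M}$ in this expansion equals the number of $g \in \GL_k(q)$ satisfying $g^{-1}x^{-1}g = y$, which is either $0$ (if $y$ is not conjugate to $x^{-1}$) or $|C_{\GL_k(q)}(x)|$.

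The key arithmetic input is that every centralizer $C_{\GL_k(q)}(x)$ contains the scalar subgroup $\mathbb{F}_q^{\times}\cdot I$ of order $q-1$. Hence $(q-1) \mid |C_{\GL_k(q)}(x)|$ for every $x$, and under the hypothesis $\ell \mid q-1$ every coefficient in the above expansion of $\widetilde{T}^2$ vanishes in $R$. Therefore $\widetilde{T}^2 = 0 \in R[\mathcal{M}]$ and $(T^{\ast})^2 = 0$ as an operator on $V$, i.e. as an element of $I_1$. I expect this centralizer observation to be the only non-formal ingredient; the rest is bookkeeping in $R[\mathcal{M}]$, provided one keeps straight that $(T^{\ast})^2$ denotes the composition of the operators $T^{\ast}: V \to V$ coming from the $R[\mathcal{M}]$-module structure, which is precisely the action of the group-algebra product $\widetilde{T}^2$.
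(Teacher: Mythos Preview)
Your proof is correct and follows essentially the same route as the paper: for (i), both expand the square in $R[\mathcal{M}]$, collect the coefficient of each group element as the order of a centralizer (or conjugacy-class stabilizer), and then invoke that the scalar subgroup $\mathbb{F}_q^{\times}\cdot I$ sits inside every centralizer to force divisibility by $q-1$; your preliminary absorption of the sign via $T^{\ast}=z\cdot\widetilde{T}$ is a harmless cosmetic simplification. For (ii) the paper simply declares the statement obvious, and your explicit verification (commutation with $\mathcal{M}$ for $f\in I_1$, and the reindexing $g\mapsto -g^{-1}$ for $f\in I_w$) is exactly the intended unpacking.
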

\begin{proof}
(ii) is obvious, so we only treat (i). First, we have
\begin{equation*}
(T^{\ast})^2 = 
\sum_{g\in \GL_k(q)} \sum_{h\in \GL_k(q)} \left( \begin{smallmatrix}
gh \\ & g^{-1} h^{-1}\end{smallmatrix}\right)
= 
\sum_{g\in \GL_k(q)} \sum_{h\in \GL_k(q)} \left( \begin{smallmatrix}
g \\ & hg^{-1} h^{-1}\end{smallmatrix}\right)
= 
\sum_{g\in \GL_k(q)} \sum_{h\in C(g)} (\# Z_h)\left( \begin{smallmatrix}
g \\ & h\end{smallmatrix}\right),
\end{equation*}
where $C(g) = \{ hgh^{-1} \;|\; h\in \GL_k(q)\}$ and 
$Z_h = \{x\in \GL_k(q) \;|\; xhx^{-1} = h\}$.
As $k_F^{\times}.\mathbf{1}$ is a subgroup of
$Z_h$ for any $h\in G$, $\ell$ divides all the coefficients
$\#Z_h$ in the above expression.
\end{proof}
It is easy to see that the first condition in the Proposition is not true in general. In fact, one can produce 
examples (i. e. list combinations of $\ell, q, n$) where $T^{\ast}$ is not even nilpotent.
\begin{Def}
\begin{enumerate}[label={\upshape(\roman*)}]
\item Denote by $\tau = \tau_{\ell,q, k} := q^{(k^2)} \operatorname{mod}\ell$. It follows by our assumption $(\ell, q) = 1$ that
%$\ell$ divides the order of $G$ that 
$\tau$ is non-zero, so $\tau\in\{1,\ldots, \ell -1\}$.
\item If $f$ is in $I_1$ or $I_w$, $\eta \in W$ and $j\in \mathbb{N}$, we write $[\eta]_f^j$ for $[\eta]_{(T^{\ast})^jf}$
and $[\eta]^j$ for $[\eta]^j_1$. Observe that this makes sense only
for even $j$ (in the case that $f\in I_1$ and $\eta$ diagonal or $f\in I_w$ and $\eta$ non-diagonal) or
for odd $j$ (in the case that $f\in I_w$ and $\eta$ diagonal or $f\in I_1$ and $\eta$ non-diagonal).
Clearly, $[\eta]_f = [\eta]_f^0$.
\end{enumerate}
\end{Def}

Our aim is to prove the following:
\begin{Thm}\label{weiobg084ht802hrth3rieh98z25zt934hg98rghwibte32}
Let $[\eta]_f, [\delta]_g\in \mathscr{H}_R(G, \mathscr{P}, V)$ and, in the case $l(\eta) >1$, write
$\eta = \eta' v$ with $v\in S = \{w,w'\}$. Then
\begin{equation*}
[\eta]_f\ast [\delta]_g = \begin{cases}
[\eta\delta]_{fg} & \text{ if }l(\eta\delta) = l(\eta)+l(\delta);\\
\tau.[\eta\delta]_{fg} + [\eta'\delta]_{fg}^1 & \text{ if }l(\eta) = l(\delta) =1 \text{ and } l(\eta\delta) = 0.
\end{cases}
\end{equation*}
(The theorem does not make a statement for the case $l(\eta\delta) \neq l(\eta)+l(\delta)$ and
$\operatorname{max}(l(\eta), l(\delta)) >1$.)
\end{Thm}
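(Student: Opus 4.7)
The plan is to unfold the convolution
\begin{equation*}
([\eta]_f \ast [\delta]_g)(x) = \sum_{y\in \mathscr{P}\backslash G} \varphi_{[\eta]_f}(y)\circ \varphi_{[\delta]_g}(y^{-1}x)
\end{equation*}
using explicit left coset representatives for $\mathscr{P}$ inside $\mathscr{P}\eta\mathscr{P}$, read off from the description of $\mathscr{P}^{(\eta^{-1})}$ in the preceding lemma. The support of the product lies in $\mathscr{P}\eta\mathscr{P}\cdot\mathscr{P}\delta\mathscr{P}$ and, by the support lemma, is a union of double cosets indexed by $W$, so it suffices to evaluate the convolution at a set of representatives of these double cosets.

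For Case 1 (lengths add), I would invoke the standard Iwahori--Matsumoto calculus, transferred to $\mathscr{W}$ via the length identity $l'|_W = n^2 l$ of the preceding proposition: when $l(\eta\delta)=l(\eta)+l(\delta)$, the product $\mathscr{P}\eta\mathscr{P}\cdot\mathscr{P}\delta\mathscr{P}$ reduces to the single double coset $\mathscr{P}\eta\delta\mathscr{P}$ and the multiplication map on left cosets is a bijection. The convolution then collapses at $x = \eta\delta$ to one term in which the contributions pair off to give precisely $f \circ g$, yielding $[\eta\delta]_{fg}$.

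For Case 2, a Case-1 reduction by length-$0$ translations lets me assume $\eta=\delta=w$, so $\eta' = 1$ and the target formula is $[w]_f \ast [w]_g = \tau[1]_{fg} + [w]^1_{fg}$. By the preceding lemma, $\mathscr{P}^{(w)}$ is an Iwahori-like subgroup of index $q^{k^2}$ in $\mathscr{P}$, and the cosets of $\mathscr{P}$ inside $\mathscr{P}w\mathscr{P}$ are parameterized by the upper-right block modulo $\mathfrak{P}$, i.e.\ by $X \in \mathbb{M}_{k\times k}(k_F)$. Evaluating the convolution at $x = 1$, all $q^{k^2}$ representatives contribute $f \circ g$ (the extra unipotent factor lies in $\mathscr{P}(1)$ and acts trivially on $V$), giving the coefficient $q^{k^2} \equiv \tau \pmod{\ell}$. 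Evaluating at $x = w$, one performs an Iwahori-style triangular decomposition of $w\cdot\left(\begin{smallmatrix} I & X \\ & I\end{smallmatrix}\right)\cdot w$: the invertible $X$ produce contributions of the form $\sigma\!\left(\begin{smallmatrix} X & \\ & -X^{-1}\end{smallmatrix}\right)\circ f\circ g$, while the non-invertible $X$ contribute zero by cuspidality of $V$. Summing over $X\in \GL_k(q)$ yields precisely the operator $T^\ast$, producing the second summand $[w]^1_{fg}$.

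The main obstacle is the explicit matrix bookkeeping in Case 2: carrying out the triangular decomposition cleanly to see the sign $-X^{-1}$ in the lower block (and not just some element of $\GL_k(\mathcal{O})$), and verifying that the non-invertible parameters drop out even when $V$ is merely cuspidal. The irreducibility-based vanishing argument of \cite{Howe} must be replaced by a direct appeal to the vanishing of proper Harish-Chandra restrictions of $V$; adapting the combinatorics to this weaker hypothesis is where the substantive work lies.
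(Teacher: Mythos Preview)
Your approach is essentially the paper's: both handle the additive case via the Iwahori--Matsumoto length argument and the non-additive case via an explicit coset computation. Two points deserve comment, though.

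First, your reduction of all length-$1$ pairs to $\eta=\delta=w$ does not go through in the stated generality (arbitrary cuspidal $V$). To move, say, $[tw]_f$ (with $tw$ diagonal, $f\in I_1$) to something supported at $w$ you must factor through $[t]$ or $[w]$, both non-diagonal; this forces subscripts in $I_w$, and splitting $1\in I_1$ as a product of two $I_w$-elements requires an \emph{invertible} element of $I_w$, i.e.\ $V^w\cong V$. For general cuspidal $V$ this can fail (already when $I_w=0$, the diagonal length-$1$ elements $tw$, $wt^{-1}$ still give a non-trivial Case-2 product). The paper sidesteps this by directly computing a second base case $[tw]_1[wt^{-1}]_1$ with the lower-unipotent representatives $\beta_y$, and only then reduces the remaining cases by length-$0$ moves that never require inverting in $I_w$. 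Your method handles that extra computation just as well; you simply cannot omit it.

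Second, your worry about the non-invertible parameters at $x=w$ is misplaced. With $y=\alpha_X w$ one finds $y^{-1}w=\left(\begin{smallmatrix} I&0\\-X&I\end{smallmatrix}\right)$, which lies in $\mathscr{P}w\mathscr{P}$ precisely when the reduction $\bar X$ is invertible; for singular $\bar X$ it lands in a smaller Bruhat cell and $\psi(y^{-1}w)=0$ for pure support reasons---no cuspidality needed. Cuspidality enters exactly where you already invoke it, through the support lemma: it guarantees the product has no components on double cosets outside $W$, so evaluating at $x=1$ and $x=w$ captures everything. The paper makes this step explicit by recording the inclusion $\mathscr{P}w\mathscr{P}w\mathscr{P}\subset\mathscr{P}\cup\mathscr{P}w\mathscr{P}\cup\bigcup_{x\notin W}\mathscr{P}x\mathscr{P}$ before the computation.
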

\begin{Cor}
The above theorem completely determines the multiplication in $\mathscr{H}_R(G, \mathscr{P}, V)$.
\end{Cor}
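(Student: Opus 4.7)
The plan is to reduce every product $[\eta]_f \ast [\delta]_g$ to a sequence of pairwise multiplications of basis elements of length at most one, each of which the theorem handles directly. Recall that $W$ is (isomorphic to) the extended affine Weyl group of $\GL_2(F)$; it is generated by the length-zero element $t$ together with the length-one reflection $w$ (whose conjugate $w' = twt^{-1}$ also has length one). Consequently, every $\eta \in W$ admits a reduced expression $\eta = s_1 s_2 \cdots s_n$ in which each factor $s_i$ has length $\le 1$.

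Applying case (1) of Theorem \ref{weiobg084ht802hrth3rieh98z25zt934hg98rghwibte32} repeatedly, one may then write any basis element $[\eta]_f$ as a product $[s_1]_{f_1} \ast [s_2]_{f_2} \ast \cdots \ast [s_n]_{f_n}$ for a suitable distribution of the decoration $f$ among the $f_i$. Here one uses the commutation relation $T^{\ast} h = h T^{\ast}$ from the preceding proposition in order to move any $T^{\ast}$-factors into whichever single slot is convenient, and one typically takes most of the $f_i$ to be the identity in $I_1$ or $I_w$. Decomposing $[\delta]_g$ similarly and invoking associativity, the computation of $[\eta]_f \ast [\delta]_g$ becomes a finite sequence of pairwise multiplications of basis elements of length $\le 1$. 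For such pairs, either the lengths add (in which case (1) applies), or we are in the unique remaining configuration where both factors have length one and their product has length zero (in which case (2) applies); no other possibility can occur with length-$\le 1$ factors.

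It remains to argue termination. Order products of basis elements lexicographically by the pair (number of factors, total length). Each application of case (1) strictly decreases the number of factors while preserving the total length, and each application of case (2) replaces two factors of combined length $2$ by a sum of basis elements of total length strictly smaller than $2$. Iterating, we arrive after finitely many steps at an $R$-linear combination of basis elements, uniquely determined by the formulas of the theorem. The main piece of bookkeeping — tracking whether each intermediate decoration $f_i$ or $(T^{\ast})^j f_i$ belongs to $I_1$ or to $I_w$ — is managed uniformly by the parity convention on $j$ laid down in the definition just before the theorem, so no additional difficulty arises.
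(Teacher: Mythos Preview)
Your argument is correct and follows essentially the same strategy as the paper: reduce via case (1) of the theorem to products of basis elements of length at most one, and then invoke case (2) for the only remaining configuration. The paper organises this as an induction on $l(\eta)+l(\delta)$, peeling off a single factor $\eta_1$ from $\eta$ at each step, whereas you decompose both $\eta$ and $\delta$ completely at the outset and then multiply out; the two schemes are equivalent.

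One point deserves more care than you give it. When you write $[\eta]_f = [s_1]_{f_1}\ast\cdots\ast[s_n]_{f_n}$ ``for a suitable distribution of the decoration'', you implicitly need most of the $f_i$ to be the identity of $I_1$; but if $s_i$ is non-diagonal then $[s_i]_1$ requires $1\in I_w$, which is not available in general. The paper handles exactly this by insisting that the split-off piece $\eta_1$ be \emph{diagonal} (see the explicit case distinction producing $(\eta_1)(\eta_2)$ in the paper's proof), so that $[\eta_1]_1$ with $1\in I_1$ is always meaningful and the original decoration $f$ stays on $[\eta_2]_f$, where the diagonality of $\eta_2$ automatically matches that of $\eta$. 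Your decomposition can be repaired in the same way---group the $s_i$ into diagonal blocks of length $\le 1$, leaving at most one non-diagonal factor to carry $f$---but this is the substantive bookkeeping step, not merely the parity convention on $j$.
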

\begin{proof}[Proof of the corollary.]
We show how the theorem can be used inductively to evaluate the product $[\eta]_f\ast[\delta]_g$, 
where the induction is on the number $l_{\eta,\delta} = l(\eta) + l(\delta)$. The above theorem 
allows evaluation if $l_{\eta, \delta} \leq 2$. Moreover, the above theorem establishes
\begin{equation*}
\text{Fact A}:\qquad l(\eta \delta)<l_{\eta,\delta} \Rightarrow [\eta]_f\ast[\delta]_g = \sum_I
[\epsilon_i]_{h_i} \text{ with } l(\epsilon_i)<l_{\eta,\delta} \text{ for all } i\in I
\end{equation*}
under the condition that $l_{\eta, \delta} \leq 2$.\\
For the induction step, assume that Fact A and multiplication of $[\eta]_f$ and $[\delta]_g$
are established as long as $l_{\eta, \delta}<m$ for some $m$.
We explain how to prove Fact A and how to evaluate $[\eta]_f\ast[\delta]_g$ if 
$l_{\eta,\delta} = m$: \\
If $l(\eta\delta) = m$, we can directly apply the above theorem, so assume
inequality.
Moreover, assume for the moment that $l(\eta)\geq 2$. This means, we can break up $\eta = \eta_1 \eta_2$ in a way that 
$l(\eta_1)+l(\eta_2) = l_{\eta_1,\eta_2}$, both $l(\eta_1)$ and $l(\eta_2)$ are nonzero,
and $\eta_1$ is diagonal. Explicitly, if $\eta = t^{\alpha}w_1\ldots w_a$ is a minimal expression, we can take
\begin{equation*}
(\eta_1) (\eta_2) = 
\begin{cases}
(t^{\alpha}w_1)(w_2\ldots w_a) & \text{ if $\alpha$ is odd};\\
(t^{\alpha - 1}w_1')(tw_2\ldots w_a) & \text{ if $\alpha>0$ is even};\\
(t^{-1}w_1')(t w_2\ldots w_a) & \text{ if $\alpha=0, w_1 = w$};\\
(tw_1')(t^{-1} w_2\ldots w_a)  & \text{ if $\alpha=0, w_1 = w'$}.
\end{cases}
\end{equation*}
Here, the operator $w_1\mapsto w_1'$ is given by interchanging the symbols ``$w$'' and ``$w'$''.

 In this situation, the inequality 
$l(\eta\delta)<m$ implies $l(\eta_2\delta) < l_{\eta_2, \delta} = l_{\eta,\delta} - l(\eta_1) < m$ (as can
be easily seen from writing $\delta = w_{a+1}\ldots w_{a+b}t^{\beta}$ and noting $
l(\eta\delta)<m \Leftrightarrow w_a = w_{a+1}$).
So, in 
\begin{equation*}
[\eta]_f\ast [\delta]_g = [\eta_1]_{1}\ast([\eta_2]_f\ast[\delta]_g),
\end{equation*}
the expression in the brackets can be calculated because
$l_{\eta_2, \delta}<m$:
\begin{equation*}
[\eta]_f\ast [\delta]_g =
[\eta_1]_1 \ast \sum_I
[\epsilon_i]_{h_i},
\end{equation*}
where $l(\epsilon_i) <l_{\eta_2, \delta} = m - l(\eta_1)$ (because of Fact A applied to 
$[\eta_2]_f\ast[\delta]_g$). Therefore, each summand 
$[\eta_1]_1 \ast [\epsilon_i]_{h_i}$ can be calculated. Moreover,
if 
\begin{equation*}
[\eta_1]_1 \ast [\epsilon_i]_{h_i} = \sum_{j\in J_i} [\nu_j^i]_{h_j^i},
\end{equation*}
we can use the inequality $l_{\eta_1, \epsilon_i} < m - l(\eta_1) + l(\eta_1) = m$
to apply either Fact A (if $l_{\eta_1, \epsilon_i} < l(\eta_1)+l(\epsilon_i)$) or
the above theorem (if $l_{\eta_1, \epsilon_i} = l(\eta_1)+l(\epsilon_i)$), 
giving $l(\nu_j^i) < m$ and establishing Fact A for $[\eta]_f\ast[\delta]_g$.

If the initial assumption on $\eta$ is not met, we can use an analogous argument from the right
(i. e. breaking up $\delta = \delta_2 \delta_1$).
\end{proof}
\subsection{Proof of Theorem \ref{weiobg084ht802hrth3rieh98z25zt934hg98rghwibte32}}
Our main tool is the following formula:
\begin{Prop}
Let $[\eta]_f, [\delta]_g \in \mathscr{H}_R(G, \mathscr{P}, V)$. 
Then
\begin{equation*}
[\eta]_f \ast [\delta]_g = \sum_{\epsilon\in W\cap \mathscr{P}\eta \mathscr{P} \delta \mathscr{P}} [\epsilon]_{h_{\epsilon}}, 
\end{equation*}
where
\begin{equation}\label{VignerasFormula}
h_{\epsilon} = \sum_{\substack{ (k_1, k_2)\in \left(\mathscr{P}^{(\eta^{-1})}\backslash \mathscr{P}\right)\times \left(\mathscr{P}^{(\delta^{-1})}\backslash \mathscr{P}\right)\\ \text{ such that}\;
k_0 := \epsilon k_2^{-1} \delta^{-1}k_1^{-1} \eta^{-1}\in \mathscr{P}}} \rho(k_0) \circ f \circ \rho(k_1) \circ g \circ \rho(k_2).
\end{equation}
\end{Prop}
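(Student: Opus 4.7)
My plan is to derive this identity by direct calculation from the identification of $\mathscr{H}_R(G, \mathscr{P}, V)$ with the convolution algebra of $\operatorname{End}_R(V)$-valued $(\mathscr{P},\mathscr{P})$-biequivariant functions on $G$ (\cite{VigLivre}, I.8.5). Normalising $\operatorname{vol}(\mathscr{P}) = 1$, the convolution reads
\begin{equation*}
(\varphi_1 \ast \varphi_2)(\epsilon) = \sum_{y \in \mathscr{P}\backslash G} \varphi_1(\epsilon y^{-1})\,\varphi_2(y),
\end{equation*}
and specialising $\varphi_1 = [\eta]_f$, $\varphi_2 = [\delta]_g$ will produce the stated formula for $h_\epsilon$ once the coset decompositions of $\mathscr{P}\eta\mathscr{P}$ and $\mathscr{P}\delta\mathscr{P}$ are inserted.

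The first step is to use the single-coset decomposition $\mathscr{P}\delta\mathscr{P} = \bigsqcup_{k_2 \in \mathscr{P}^{(\delta^{-1})}\backslash \mathscr{P}} \mathscr{P}\delta k_2$. Only representatives of the form $y = \delta k_2$ contribute, and $\varphi_2(\delta k_2) = g \circ \sigma(k_2)$. For each such $k_2$, I would then write $\epsilon k_2^{-1}\delta^{-1}$ (whenever it lies in $\mathscr{P}\eta\mathscr{P}$) using the analogous decomposition $\mathscr{P}\eta\mathscr{P} = \bigsqcup_{k_1 \in \mathscr{P}^{(\eta^{-1})}\backslash \mathscr{P}} \mathscr{P}\eta k_1$: this yields a unique $k_1$ and a unique $p \in \mathscr{P}$ with $\epsilon k_2^{-1}\delta^{-1} = p\cdot \eta k_1$. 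Solving for $p$ identifies it with $k_0 = \epsilon k_2^{-1}\delta^{-1}k_1^{-1}\eta^{-1}$ from the statement, so the non-vanishing condition is exactly $k_0 \in \mathscr{P}$, and $\varphi_1(\epsilon k_2^{-1}\delta^{-1}) = \sigma(k_0) \circ f \circ \sigma(k_1)$. Assembling the two expansions reproduces (\ref{VignerasFormula}) verbatim.

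It remains to identify the set of relevant $\epsilon$. By construction, the support of $[\eta]_f \ast [\delta]_g$ is contained in $\mathscr{P}\eta\mathscr{P}\delta\mathscr{P}$, a finite union of $(\mathscr{P},\mathscr{P})$-double cosets. By the previous Lemma (applicable since $V$ is cuspidal), only those double cosets with a representative in $W$ can contribute. Hence a system of double-coset representatives inside $\mathscr{P}\eta\mathscr{P}\delta\mathscr{P}$ can be chosen in $W \cap \mathscr{P}\eta\mathscr{P}\delta\mathscr{P}$, which gives the summation range stated.

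The main obstacle, as far as I can see, is purely notational: matching the composition order $\sigma(k_0)\circ f\circ \sigma(k_1)\circ g\circ \sigma(k_2)$ to the convention chosen for the convolution (left vs.\ right summation, and whether one uses $\varphi_1(y)\varphi_2(y^{-1}\epsilon)$ or $\varphi_1(\epsilon y^{-1})\varphi_2(y)$). Once the convention is fixed to agree with the isomorphism of \cite{VigLivre}, I.8.5, no representation-theoretic input beyond the definition of $[\eta]_f$ is needed, and the computation goes through mechanically.
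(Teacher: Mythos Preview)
Your derivation is correct and complete. The paper does not actually prove this proposition; it simply cites it as ``a slightly reformulated version of the formula given in \cite{vigneras1998induced}, II.2.'' Your argument supplies exactly the standard unpacking of that reference: convolution in the model of \cite{VigLivre}, I.8.5, the single-coset decomposition $\mathscr{P}\delta\mathscr{P}=\bigsqcup_{k_2}\mathscr{P}\delta k_2$ (and likewise for $\eta$), and the observation that for fixed $k_2$ there is at most one $k_1$ with $k_0\in\mathscr{P}$, which is why the double sum over $(k_1,k_2)$ with the constraint $k_0\in\mathscr{P}$ is the right index set. Your remark about the summation range in $\epsilon$ using cuspidality and the earlier support lemma is also the intended reading.
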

For notational simplicity, we will refer to the pairs $(k_1, k_2)$ which fulfill the condition described under the summation sign
as ``admissible'' pairs (with respect to the problem of evaluating $h_{\epsilon}$).
\begin{proof} This is a slightly reformulated version of the formula given in \cite{vigneras1998induced}, II.2.
\end{proof}
It is important to note that the indices under both summation signs do not depend on $R$, $V$,$f$ and $g$. This means, we can get a 
lot of information just from looking at the spherical integral Hecke algebra $\mathscr{H}_{\mathbb{Z}}(G, \mathscr{P}, 1)$.
\begin{Prop}
$\# (\mathscr{P}x\mathscr{P}/\mathscr{P}) = q^{l'(x)}$ for $x\in W$.
\end{Prop}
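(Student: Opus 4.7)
The plan is to combine the standard identity
\begin{equation*}
\#(\mathscr{P}x\mathscr{P}/\mathscr{P}) = [\mathscr{P} : \mathscr{P}^{(x)}]
\end{equation*}
with the explicit description of $\mathscr{P}^{(x)}$ given by the previous Lemma. The identity is a one-line check: the map $\mathscr{P} \twoheadrightarrow \mathscr{P}x\mathscr{P}/\mathscr{P}$, $p \mapsto px\mathscr{P}$, is surjective, and $px\mathscr{P} = p'x\mathscr{P}$ iff $(p')^{-1}p \in x\mathscr{P}x^{-1} \cap \mathscr{P} = \mathscr{P}^{(x)}$, so each fiber has size $|\mathscr{P}^{(x)}|$. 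This reduces the proposition to a finite-index computation inside $\mathscr{P}$.

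Next I will feed $x = \eta \in W$ into the four cases of the Lemma. In each case the only difference between $\mathscr{P}$ and $\mathscr{P}^{(\eta)}$ is confined to a single off-diagonal $k \times k$ block, so the index is the codimension of that $\mathcal{O}$-lattice inside its ambient $\mathbb{M}_{k\times k}(\mathcal{O})$ (or $\mathbb{M}_{k\times k}(\mathfrak{P})$). Routine counting yields
\begin{equation*}
[\mathscr{P} : \mathscr{P}^{(\eta)}] = \begin{cases} q^{k^2 \delta} & \text{Case 1,} \\ q^{-k^2 \delta} & \text{Case 2,} \\ q^{k^2 (1-\delta)} & \text{Case 3,} \\ q^{k^2 (\delta - 1)} & \text{Case 4.} \end{cases}
\end{equation*}

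To finish I need to match these exponents with $l'(\eta)$. By the previous Proposition, $l'|_W = c \cdot l$ for some constant $c$, where $l$ is the Coxeter length on $W$ viewed as the affine Weyl group of $\GL_2(F)$. Evaluating at $\eta = w$ (which is Case 3 with $\delta = 0$) pins down the constant: since $l(w) = 1$ and the index computation gives $q^{k^2}$, one reads off $c = k^2$. It then remains to verify that $l(\eta)$ equals $\delta, -\delta, 1-\delta$, $\delta-1$ in Cases 1--4 respectively. This is a short check against the standard length formula for $W \cong \mathbb{Z}^2 \rtimes S_2$: the powers of $t$ are length $0$, the simple reflections $w, w'$ are length $1$, translations $\pi^{(a,b)}$ have length $|a-b|$, and reflected translations $w \cdot \pi^{(a,b)}$ have length $|1-\delta|$, which splits into $1-\delta$ for $\delta \leq 0$ and $\delta - 1$ for $\delta > 0$, in perfect agreement with the four indices above.

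The main obstacle is purely bookkeeping, namely tracking the sign of $\delta$ and the value of $v$ through the four cases and lining them up with the abstract Coxeter length on $W$. A more conceptual shortcut would be to argue that $(\mathscr{P}, W)$ behaves as a Tits-style system, so that $\#(\mathscr{P}xs\mathscr{P}/\mathscr{P}) = \#(\mathscr{P}x\mathscr{P}/\mathscr{P}) \cdot \#(\mathscr{P}s\mathscr{P}/\mathscr{P})$ whenever $l(xs) = l(x) + 1$, reducing the statement by induction to the two generator computations $\#(\mathscr{P}w\mathscr{P}/\mathscr{P}) = \#(\mathscr{P}w'\mathscr{P}/\mathscr{P}) = q^{k^2}$; however, justifying this multiplicativity directly would essentially duplicate the case analysis already encapsulated in the Lemma, so the direct approach seems preferable.
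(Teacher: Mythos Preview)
Your approach is genuinely different from the paper's. The paper does not touch the explicit description of $\mathscr{P}^{(\eta)}$ at all; instead it reduces to the Iwahori case. Using the factorisation $\mathscr{P}=\mathfrak{M}^{\times}\mathscr{I}$ (with $\mathfrak{M}^{\times}=\GL_k(\mathcal{O})\times\GL_k(\mathcal{O})$ normalised by $W$) one gets $\mathscr{P}x\mathscr{P}/\mathscr{P}=\mathscr{I}x\mathscr{P}/\mathscr{P}$, and then the matrix identity $\mathscr{I}\cap x\mathscr{P}x^{-1}=\mathscr{I}\cap x\mathscr{I}x^{-1}$ (valid precisely because $x\in W$ has block shape) gives a bijection with $\mathscr{I}x\mathscr{I}/\mathscr{I}$, whose cardinality is $q^{l'(x)}$ by the standard Iwahori--Bruhat theory. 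This avoids all four cases and any explicit length formula on $W$. Your route, by contrast, recycles the Lemma you already have and is entirely self-contained once the index counts are done; the paper's route is slicker but imports the Iwahori result from Bushnell--Kutzko.

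There is, however, a small circularity in your argument that you should repair. You write that evaluating at $\eta=w$ ``pins down the constant: since $l(w)=1$ and the index computation gives $q^{k^2}$, one reads off $c=k^2$''. But reading off $c$ from the index presupposes $[\mathscr{P}:\mathscr{P}^{(w)}]=q^{l'(w)}$, which is exactly the proposition at $\eta=w$. What you actually need is an independent computation of $l'(w)$. This is immediate: $w\in S_n$ is the permutation $(1,k{+}1)(2,k{+}2)\cdots(k,2k)$, and its Coxeter length in $S_n$ is the number of inversions, namely $k^2$. With that one extra sentence your argument is complete: you have shown $[\mathscr{P}:\mathscr{P}^{(\eta)}]=q^{k^2\,l(\eta)}$ case by case, and $l'(\eta)=k^2\,l(\eta)$ by the previous Proposition together with $l'(w)=k^2$. (Incidentally, this also shows that the constant quoted in the paper's remark on that Proposition should be $k^2$, not $n^2$.)
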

\begin{proof}
By \cite{bushnell1993admissible}, Chapter 5.5, there is a subgroup $\mathfrak{M}^{\times}\subset \mathscr{P}$ such that 
$\mathscr{P} = \mathfrak{M}^{\times}\mathscr{I}$ (with $\mathscr{I}$ denoting the Iwahori subgroup) 
and $W$ normalizes $\mathfrak{M}^{\times}$.
(In our case, $\mathfrak{M}^{\times}$ is $\GL_k(\mathcal{O})\times \GL_k(\mathcal{O})$ understood
as block matrices.)
So we can use the solid bijections in
\begin{equation*}
\xymatrix{
\mathscr{P}x\mathscr{P}/\mathscr{P} \ar@{=}[r] & \mathscr{I}x\mathscr{P}/\mathscr{P} \ar@{<.>}[r]\ar@{<->}[d] & \mathscr{I}x\mathscr{I}/\mathscr{I}\ar@{<->}[d]\\
& \mathscr{I}/(\mathscr{I}\cap x\mathscr{P}x^{-1}) \ar@{<->}[r]& \mathscr{I}/(\mathscr{I}\cap x\mathscr{I}x^{-1})
}
\end{equation*}
to establish that the dotted arrow is a bijection.
The nature of the vertical arrows is clear. The bottom horizontal arrow is a bijection
by use of the identity
\begin{equation*}
\mathscr{I}\cap x\mathscr{P}x^{-1} = \mathscr{I}\cap x\mathscr{I}x^{-1},
\end{equation*}
which follows from a straightforward matrix computation, invoking the block 
structure of $x$ (i. e. the fact that $x\in W$).
Finally, the cardinality of $\mathscr{I}x\mathscr{I}/\mathscr{I}$ is 
$q^{l'(x)}$ e. g. by \cite{bushnell1993admissible},
5.4.3.(ii).
\end{proof}
\begin{Thm}\label{Theorem310}
If $l(\eta\delta) = l(\eta)+l(\delta)$ and
$\eta, \delta \in W$, then
$[\eta]\ast[\delta] = [\eta\delta]$ in $\mathscr{H}_{\mathbb{Z}}(G, \mathscr{P},1)$.
\end{Thm}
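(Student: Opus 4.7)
The plan is to apply the convolution formula of the preceding Proposition, specialized to $R=\mathbb{Z}$, $V=1$, and $f=g=1$. In this setting every summand $\rho(k_0)\circ f\circ \rho(k_1)\circ g\circ \rho(k_2)$ collapses to $1\in\mathbb{Z}$, so the formula becomes
\begin{equation*}
[\eta] \ast [\delta] \;=\; \sum_{\varepsilon \in W \cap \mathscr{P}\eta\mathscr{P}\delta\mathscr{P}} h_{\varepsilon} \cdot [\varepsilon], \qquad h_{\varepsilon} = \#\{\text{admissible pairs for }\varepsilon\}.
\end{equation*}
The theorem will follow once I establish (i) the only $\varepsilon\in W$ contributing is $\eta\delta$, and (ii) $h_{\eta\delta}=1$.

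For (i) I would prove the double-coset identity $\mathscr{P}\eta\mathscr{P}\delta\mathscr{P}=\mathscr{P}\eta\delta\mathscr{P}$, the parahoric analogue of the usual braid multiplication on Iwahori double cosets. To do this I would invoke the decomposition $\mathscr{P}=\mathfrak{M}^{\times}\mathscr{I}$ used in the proof of Proposition 3.9, together with the fact that $\mathfrak{M}^{\times}$ is normalized by $W$. Combined with the cited proportionality $l'|_{W}=n^{2}\cdot l$, the hypothesis $l(\eta\delta)=l(\eta)+l(\delta)$ lifts to $l'(\eta\delta)=l'(\eta)+l'(\delta)$, and the Iwahori version $\mathscr{I}\eta\mathscr{I}\delta\mathscr{I}=\mathscr{I}\eta\delta\mathscr{I}$ (standard, e.g.\ from Chapter 5.4 of \cite{bushnell1993admissible}) transfers to $\mathscr{P}$.

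For (ii), the pair $(1,1)$ is visibly admissible since $k_0=\eta\delta\cdot\delta^{-1}\eta^{-1}=1\in\mathscr{P}$. To rule out any other pair, I would use a cardinality comparison via Proposition 3.9: the total number of pairs equals
\begin{equation*}
\bigl|\mathscr{P}/\mathscr{P}^{(\eta^{-1})}\bigr|\cdot\bigl|\mathscr{P}/\mathscr{P}^{(\delta^{-1})}\bigr|=q^{l'(\eta)+l'(\delta)}=q^{l'(\eta\delta)}=\bigl|\mathscr{P}\eta\delta\mathscr{P}/\mathscr{P}\bigr|.
\end{equation*}
By (i) the assignment $(k_1,k_2)\mapsto \eta k_1\delta k_2\cdot\mathscr{P}$ lands in $\mathscr{P}\eta\delta\mathscr{P}/\mathscr{P}$. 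Once one shows this map is surjective, the equality of cardinalities forces it to be a bijection, so exactly one pair---necessarily $(1,1)$---maps to the coset $\eta\delta\cdot\mathscr{P}$, giving $h_{\eta\delta}=1$ as required.

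The main technical obstacle is the surjectivity of the coset map in (ii): one must write an arbitrary $p_1\eta\delta p_2\in\mathscr{P}\eta\delta\mathscr{P}$ in the form $p\cdot\eta k_1\cdot\delta k_2$, which involves sliding $\mathscr{P}$-factors past $\eta$ and $\delta$ with a remainder that can be absorbed into $\mathscr{P}$ on the left. This is precisely the place where the length-additivity hypothesis is actually used: without it, the double cosets $\mathscr{P}\eta k_1\delta\mathscr{P}$ could collapse in uncontrolled ways and the map would fail to be a bijection. Everything else is either cardinality bookkeeping or a direct citation.
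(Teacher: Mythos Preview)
Your step (i) is exactly the paper's: the double-coset identity $\mathscr{P}\eta\mathscr{P}\delta\mathscr{P}=\mathscr{P}\eta\delta\mathscr{P}$ is obtained from the proportionality $l'|_W=c\cdot l$ and Bushnell--Kutzko (the paper cites Lemma~5.6.12 rather than \S5.4, but the content is the same). So after (i) one has $[\eta][\delta]=\lambda[\eta\delta]$ for some $\lambda\in\mathbb{Z}$, and only the determination of $\lambda$ differs.

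For (ii) the paper does \emph{not} count admissible pairs directly. Instead it applies the augmentation map $\epsilon:\mathscr{H}_{\mathbb Z}(G,\mathscr{P},1)\to\mathbb C$, $f\mapsto\int_G f$, which is a ring homomorphism, and reads off $q^{l'(\eta)}q^{l'(\delta)}=\lambda\,q^{l'(\eta\delta)}$ from Proposition~3.9. Your bijection argument is this same bookkeeping unpacked, and it works, but two remarks are in order. First, the admissibility condition $k_0=\eta\delta\,k_2^{-1}\delta^{-1}k_1^{-1}\eta^{-1}\in\mathscr{P}$ is equivalent to $\eta k_1\delta k_2\in\mathscr{P}\cdot\eta\delta$, a \emph{left} coset; so the natural target of your map is $\mathscr{P}\backslash\mathscr{P}\eta\delta\mathscr{P}$, not $\mathscr{P}\eta\delta\mathscr{P}/\mathscr{P}$. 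Second, the surjectivity you flag as the ``main technical obstacle'' is in fact immediate once (i) is known: $p_1\eta\delta p_2=p_1\cdot(\eta\cdot 1\cdot\delta\cdot p_2)$ already has the required shape with $k_1=1$, $k_2=p_2$. The actually delicate point in your formulation is the \emph{well-definedness} of the map on the second coset factor: replacing $k_2$ by $pk_2$ with $p\in\mathscr{P}^{(\delta^{-1})}$ inserts $\delta p\delta^{-1}\in\mathscr{P}$ in the middle, and there is no obvious reason this is absorbed by the left $\mathscr{P}$ across $\eta k_1$. One can still push the counting through (e.g.\ by showing all fibres of the map from $\mathscr{P}\times\mathscr{P}$ have the same size), but this is exactly what the augmentation homomorphism packages for free. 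In short: same proof, but the paper's phrasing via $\epsilon$ is cleaner and sidesteps the issue you misidentified.
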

\begin{proof}
Using that $l$ and $l'$ differ by a non-zero factor, we can use Lemma 5.6.12 of \cite{bushnell1993admissible} 
to deduce from the addition of lengths that $\mathscr{P}\eta \mathscr{P} \delta \mathscr{P}  = \mathscr{P}\eta \delta \mathscr{P}$. This implies
that
$[\eta][\delta] = \lambda[\eta\delta]$ for some $\lambda\in \mathbb{Z}$. Now,
for a fixed complex-valued Haar measure $d$ on $G$, one defines the augmentation map
\begin{equation*}
\epsilon: \mathscr{H}_{\mathbb{Z}}(G, \mathscr{P},1)\rightarrow \mathbb{C} \qquad  f\mapsto \int_G f(t) dt
\end{equation*}
and checks that it respects the multiplication. (This follows from Proposition 4.2 of \cite{BH},
if one takes for $V$ the trivial representation and if one uses the embedding
$\mathscr{H}_{\mathbb{Z}}(G, \mathscr{P},1)\subset \mathscr{H}_{\mathbb{C}}(G, \mathscr{P},1)$.) Therefore,
\begin{equation*}
q^{l'(\eta)}q^{l'(\delta)} = \lambda
q^{l'(\eta\delta)}.
\end{equation*}
This proves $\lambda = 1$.
\end{proof}
This is sufficient to deduce the first part of the theorem:
\begin{Cor} Let $[\eta]_f, [\delta]_g \in \mathscr{H}(G, \mathscr{P}, V)$ with $\epsilon, \delta\in 
W$ and such that $l(\eta\delta) = l(\eta)+ l(\delta)$.
Then
\begin{equation*}
[\eta]_f\ast [\delta]_g = [\eta\delta]_{fg}.	
\end{equation*}
\end{Cor}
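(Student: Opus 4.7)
The plan is to deduce this general-coefficient statement from the trivial-coefficient statement (Theorem \ref{Theorem310}) by exploiting the crucial observation made after Vignéras's formula: the set of admissible pairs $(k_1, k_2)$ appearing in \eqref{VignerasFormula}, as well as the set of double cosets $W \cap \mathscr{P}\eta\mathscr{P}\delta\mathscr{P}$, depends only on the pair $(\eta, \delta)$ and not on $R$, $V$, $f$, or $g$. So the structural information extracted from $\mathscr{H}_{\mathbb{Z}}(G, \mathscr{P}, 1)$ transfers verbatim.

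Concretely, I would first apply the formula to $[\eta]_f \ast [\delta]_g$, obtaining
\begin{equation*}
[\eta]_f \ast [\delta]_g = \sum_{\epsilon \in W \cap \mathscr{P}\eta\mathscr{P}\delta\mathscr{P}} [\epsilon]_{h_\epsilon}.
\end{equation*}
Applying the same formula in the integral spherical case (with $V$ trivial and $f = g = 1$), Theorem \ref{Theorem310} gives $[\eta] \ast [\delta] = [\eta\delta]$. Comparing with the Vignéras sum, this forces two things simultaneously: the indexing set $W \cap \mathscr{P}\eta\mathscr{P}\delta\mathscr{P}$ equals $\{\eta\delta\}$, and for the single $\epsilon = \eta\delta$ the number of admissible pairs is exactly $1$ (since in the trivial case each summand in $h_\epsilon$ equals $1$).

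To identify this unique admissible pair, I would simply check that $(k_1, k_2) = (1, 1)$ is admissible: indeed
\begin{equation*}
k_0 = \epsilon \, k_2^{-1} \delta^{-1} k_1^{-1} \eta^{-1} = (\eta\delta)\delta^{-1}\eta^{-1} = 1 \in \mathscr{P}.
\end{equation*}
By uniqueness, $(1, 1)$ is \emph{the} admissible pair. Substituting back into \eqref{VignerasFormula} in the general case yields
\begin{equation*}
h_{\eta\delta} = \sigma(1) \circ f \circ \sigma(1) \circ g \circ \sigma(1) = f \circ g,
\end{equation*}
so $[\eta]_f \ast [\delta]_g = [\eta\delta]_{fg}$, as desired.

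There is no real obstacle here: the whole combinatorial difficulty has already been absorbed into Theorem \ref{Theorem310} via the Bushnell–Kutzko length-additivity result and the augmentation trick. The only point that needs to be spelled out carefully is the independence of the admissibility condition from $(V, f, g)$, which is what permits the transfer of the count ``$\#\{\text{admissible pairs}\} = 1$'' from the trivial coefficient case to the general one.
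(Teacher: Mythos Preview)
Your proposal is correct and follows essentially the same approach as the paper: deduce from Theorem~\ref{Theorem310} that in the integral spherical computation there is a unique admissible pair, identify it as $(1,1)$, and then use the coefficient-independence of the admissibility condition to transfer this to $\mathscr{H}_R(G,\mathscr{P},V)$, giving $h_{\eta\delta}=f\circ g$. Your write-up is slightly more explicit than the paper's in also pinning down the indexing set $W\cap\mathscr{P}\eta\mathscr{P}\delta\mathscr{P}=\{\eta\delta\}$, but the argument is the same.
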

\begin{proof}
Theorem \ref{Theorem310} implies that there occurs only one admissible pair $(k_1, k_2)$ when
using formula (\ref{VignerasFormula}) to calculate $[\eta]\ast[\delta]$ in $\mathscr{H}_{\mathbb{Z}}(G, \mathscr{P}, 1)$.
It is obvious that the choice $k_1 = 1, k_2 = 1$ is admissible, hence this is the unique admissible
pair. By the remark below the formula, this fact is still valid when  
calculating $[\eta]_f\ast [\delta]_g$, hence we get $h_{\eta\delta} = f\circ g$.
\end{proof}
In order to establish the second part -- as $t^a$ is universally length-adding and central if $2|a$ --
it suffices to evaluate
\begin{equation*}
[\eta]_f\ast[\delta]_g
\end{equation*}
(where we will suppress the symbol $\ast$ from now on) for the 8 choices for $\eta, \delta \in \{w, w', tw = w't, t^{-1}w' = wt^{-1}\}$ where there
is no addition of lengths.
We first calculate
\begin{itemize}
\item $\mathscr{P}^{(w)} = \mathscr{P}^{(wt^{-1})} = 
\left(\begin{smallmatrix}
\GL_k(\mathcal{O}) & \mathbb{M}_{k\times k}(\mathfrak{P})\\
\mathbb{M}_{k\times k}(\mathfrak{P}) & \GL_k(\mathcal{O})
\end{smallmatrix}\right)$;
\item $\mathscr{P}^{(w')} = \mathscr{P}^{(w't)} = 
\left(\begin{smallmatrix}
\GL_k(\mathcal{O}) & \mathbb{M}_{k\times k}(\mathcal{O})\\
\mathbb{M}_{k\times k}(\mathfrak{P}^2) & \GL_k(\mathcal{O}).
\end{smallmatrix}\right)$.
\end{itemize}
Therefore, in the first two cases we can take $\{\alpha_x = \left(\begin{smallmatrix}1 & x \\ & 1\end{smallmatrix}\right)\;|\;
x\in\mathbb{M}_{k\times k}(\mathbb{F}_q) \}$ and in the last two cases we can take
$\{\beta_y = \left(\begin{smallmatrix}1 &  \\ \varpi y & 1\end{smallmatrix}\right)\;|\;
y\in\mathbb{M}_{k\times k}(\mathbb{F}_q) \}$ as set of representatives for $\mathscr{P}^{(\eta)}\backslash \mathscr{P}$.
\begin{Prop}
\begin{equation*}
\mathscr{P}w\mathscr{P}w\mathscr{P} \subset \left(\mathscr{P} \cup \mathscr{P}w\mathscr{P} \cup \bigcup_{x\in X} \mathscr{P}x\mathscr{P}\right),
\end{equation*}
where $X$ can be taken as a subset of $\tilde{\mathscr{W}} - W$.
\end{Prop}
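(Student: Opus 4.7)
The plan is to bound $\mathscr{P}w\mathscr{P}w\mathscr{P}$ above by the maximal compact $\mathscr{K}=\GL_n(\mathcal{O})$ and then use the explicit parametrization of double cosets by $\tilde{\mathscr{W}}$ to rule out the ``bad'' $\eta$ (those in $W\setminus\{1,w\}$). The containment $\mathscr{P}w\mathscr{P}w\mathscr{P}\subset \mathscr{K}$ is immediate: $w$ is a permutation matrix and $\mathscr{P}\subset\mathscr{K}$, and $\mathscr{K}$ is closed under multiplication.

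Next I would show that a double coset $\mathscr{P}\eta\mathscr{P}$ with $\eta=v\cdot\operatorname{diag}(\varpi^{a_1},\ldots,\varpi^{a_{n}})\in\tilde{\mathscr{W}}$ meets $\mathscr{K}$ only if $a_i=0$ for all $i$, i.e., only if $\eta\in\mathbb{W}$. Indeed, if $p_1\eta p_2\in\mathscr{K}$ for some $p_1,p_2\in\mathscr{P}\subset\mathscr{K}$, then $\eta=p_1^{-1}\cdot (p_1\eta p_2)\cdot p_2^{-1}\in \mathscr{K}$ as well; but the non-zero entries of $\eta$ are precisely the $\varpi^{a_i}$ (placed by the monomial matrix $v$), so membership in $\mathscr{K}$ forces $a_i\geq 0$ for every $i$, while the unit-determinant condition $\det(\eta)=\pm\varpi^{\sum a_i}\in \mathcal{O}^{\times}$ forces $\sum a_i=0$. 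Together these give $a_i=0$ for all $i$. Consequently
\begin{equation*}
\mathscr{P}w\mathscr{P}w\mathscr{P}\subset \bigcup_{\eta\in \mathbb{W}}\mathscr{P}\eta\mathscr{P}.
\end{equation*}

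To conclude I would observe that $\mathbb{W}\cap W=\{1,w\}$. An element of $W$ has the form $v\cdot\operatorname{diag}(\varpi^{a_1},\ldots,\varpi^{a_n})$ with $v\in\{1,w\}$ and $a_1=\cdots=a_k$, $a_{k+1}=\cdots=a_n$; imposing additionally $a_i=0$ for all $i$ (membership in $\mathbb{W}$) reduces $W$ to $\{1,w\}$, and by the explicit choice of $\mathbb{W}$ fixed earlier both $1$ and $w$ do lie in $\mathbb{W}$. Setting $X:=\mathbb{W}\setminus\{1,w\}$ then gives $X\subset\tilde{\mathscr{W}}\setminus W$ and the desired inclusion. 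I do not anticipate any serious obstacle; the only point that deserves care is the implication ``$\eta\in\mathscr{K}\Rightarrow$ all $a_i=0$'', which depends on the particular normal form of representatives of $\tilde{\mathscr{W}}$ fixed in the paper. A more hands-on alternative would be to decompose $\mathscr{P}w\mathscr{P}=\bigsqcup_x\mathscr{P}w\alpha_x$ and compute $w\alpha_x w=\left(\begin{smallmatrix}I&0\\x&I\end{smallmatrix}\right)$ directly, identifying the $\mathscr{P}$-double coset of this matrix via the Bruhat decomposition of $\overline{x}\in\mathbb{M}_{k\times k}(\mathbb{F}_q)$ in the reductive quotient (ranks $0$, $k$ and intermediate $r$ giving $\mathscr{P}$, $\mathscr{P}w\mathscr{P}$, and cosets outside $W$ respectively); this is less slick but would pin down an explicit $X$ should that be needed later.
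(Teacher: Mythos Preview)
Your proof is correct and follows essentially the same route as the paper: bound $\mathscr{P}w\mathscr{P}w\mathscr{P}$ by $\mathscr{K}=\GL_n(\mathcal{O})$, observe that the only representatives $\eta\in\tilde{\mathscr{W}}$ whose double cosets lie in $\mathscr{K}$ are those with all $a_i=0$ (i.e.\ $\eta\in\mathbb{W}=\tilde{\mathscr{W}}\cap S_n$), and then note $\mathbb{W}\cap W=\{1,w\}$. The paper phrases the middle step via the Iwahori decomposition $\GL_n(\mathcal{O})=\bigcup_{x\in S_n}\mathscr{I}x\mathscr{I}=\bigcup_{x\in\tilde{\mathscr{W}}\cap S_n}\mathscr{P}x\mathscr{P}$, whereas you argue it directly from the entries and determinant of $\eta$; both are fine.
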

\begin{proof}
This follows easily from the observation
\begin{equation*}
\mathscr{P}w\mathscr{P}w\mathscr{P} \subset \GL_{n}(\mathcal{O}) = \bigcup_{x\in S_{n}} \mathscr{I}x\mathscr{I} = \bigcup_{x\in \tilde{\mathscr{W}}\cap S_{n}} \mathscr{P}x\mathscr{P}
\end{equation*}
and the decompositon
\begin{equation*}
\tilde{\mathscr{W}}\cap S_{n} = \bigl((\tilde{\mathscr{W}} - W)\cap S_{n}\bigr) \sqcup (W \cap S_{n}) =
\bigl((\tilde{\mathscr{W}} - W)\cap S_{n}\bigr) \sqcup \{1, w\}.
\end{equation*}
\end{proof}
\begin{Thm}[Case 1: $\eta = \delta = w$]
In $\mathscr{H}_R(G, \mathscr{P}, V)$, we have the identity
\begin{equation*}
[w]_f[w]_g = \tau[1]_{fg} + [w]_{fg}^1.
\end{equation*}
\end{Thm}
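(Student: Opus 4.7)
My plan is to apply the convolution formula (\ref{VignerasFormula}) directly, using the preceding proposition to control which $\epsilon\in W$ can contribute. Since $\mathscr{P}w\mathscr{P}w\mathscr{P}\cap W\subseteq\{1,w\}$, only two terms $h_1$ and $h_w$ can appear, and the task reduces to computing these sums. Since $w=w^{-1}$, the indexing sets $\mathscr{P}^{(w^{-1})}\backslash\mathscr{P}$ for both $k_1$ and $k_2$ can be taken to be $\{\alpha_x\mid x\in\mathbb{M}_{k\times k}(\mathbb{F}_q)\}$ as already identified in the excerpt.

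First I would carry out the key matrix computation: for $(x,y)\in\mathbb{M}_{k\times k}(\mathbb{F}_q)^2$, I compute
\begin{equation*}
\alpha_{-y}\,w\,\alpha_{-x}\,w=\begin{pmatrix} yx+I & -y\\ -x & I\end{pmatrix},
\end{equation*}
so the admissibility condition $k_0=\epsilon\,\alpha_y^{-1}w^{-1}\alpha_x^{-1}w^{-1}\in\mathscr{P}$ becomes an explicit block-matrix condition.

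For the case $\epsilon=1$, the lower-left block $-x$ must lie in $\mathbb{M}_{k\times k}(\mathfrak{P})$, forcing $x=0$ in $\mathbb{F}_q$; then $k_0=\begin{pmatrix} I & -y\\ 0 & I\end{pmatrix}\in\mathscr{P}(1)$ for all $y$, so $\rho(k_0)$ and $\rho(\alpha_y)$ both act trivially on $V$. This yields $q^{k^2}$ admissible pairs each contributing $f\circ g$, so $h_1=\tau\, fg$, giving the summand $\tau[1]_{fg}$.

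For the case $\epsilon=w$, left-multiplying by $w$ swaps the block rows; admissibility forces $x\in\GL_k(\mathbb{F}_q)$ and $y=-x^{-1}$. Writing $m=\bar x$, the reduction of $k_0$ modulo $\mathscr{P}(1)$ equals $(-m,m^{-1})\in\mathcal{M}$, while $\rho(\alpha_x)$ and $\rho(\alpha_y)$ act trivially. Summing over $m\in\GL_k(\mathbb{F}_q)$ and substituting $m\mapsto-m$ identifies the sum with $T^{\ast}$ as given in the definition (using that $-(-m)^{-1}=m^{-1}$). Hence $h_w=T^{\ast}\circ f\circ g$, producing the summand $[w]_{fg}^{1}$, and combining the two cases gives the claimed identity.

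The main obstacle is the bookkeeping around lifting from $\mathbb{F}_q$ to $\mathcal{O}$ and verifying that the off-diagonal unipotent factors genuinely lie in $\mathscr{P}(1)$ (and therefore act trivially on the inflation of $V$); once this is clear the admissibility condition picks out exactly one $y$ (resp.\ one $x$) for each $x$ (resp.\ $y$) and the match with $T^{\ast}$ is essentially a re-indexing. No nontrivial algebraic identities beyond the definition of $T^{\ast}$ are needed.
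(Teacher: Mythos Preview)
Your proof is correct and follows essentially the same route as the paper: restrict the possible $\epsilon$ via the double-coset proposition, then evaluate $h_1$ and $h_w$ from the convolution formula using the coset representatives $\alpha_x$. The only noteworthy difference is in the $\epsilon=1$ step: the paper first invokes the integral Hecke algebra count $[w][w]=q^{k^2}[1]+\ldots$ (via \cite{VigLivre}, I.3.4(iv)) to know there are exactly $q^{k^2}$ admissible pairs and then exhibits them by fixing $k_1=1$, whereas you read off the condition $x=0$ directly from the lower-left block of $k_0$ and so enumerate the admissible pairs without that detour. Your variant is slightly more self-contained; the paper's has the advantage that the same counting trick is reused verbatim in the later cases.
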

\begin{proof}
By the above proposition, we know that $[w]_f [w]_g = [1]_{X} + [w]_Y$, so we just have to work out $X$ and $Y$.
For the first summand, we first remark that by \cite{VigLivre}, I.3.4.(iv), in $\mathscr{H}_{\mathbb{Z}}(G, P, 1)$, 
we have $[w][w] = \lambda[1] + \ldots$, where $\lambda = \# \mathscr{P}w\mathscr{P}/\mathscr{P} = \#\mathscr{P}/\mathscr{P}^{(w)} = \#\mathscr{P}^{(w)}\backslash \mathscr{P} = q^{(k^2)}$.
This means, we have to locate $q^{(k^2)}$ admissible pairs in formula (\ref{VignerasFormula}). This is easily achieved by fixing
$k_1 = 1$ and letting $k_2$ run through all $q^{(k^2)}$ possible values in $\mathscr{P}^{(w)}\backslash \mathscr{P}$. Applied to 
$\mathscr{H}_R(G, \mathscr{P}, V)$, this means we get $X = q^{(k^2)} f\circ g = \tau fg$.\\
For the second factor, the condition of the formula is $k_0= wk_{x_1}^{-1}wk_{x_2}^{-1}w \in \mathscr{P}$, which 
boils down to
\begin{equation*}
\left( \begin{smallmatrix} x_2 &\textbf{1} \\ \textbf{1}+x_1x_2 & x_1\end{smallmatrix} \right) \in \mathscr{P}.
\end{equation*}
This is possible if and only if $x_1, x_2$ are invertible and fulfill $x_1x_2 = -\textbf{1}$.
This gives $\#\operatorname{GL}_k(q)$-many admissible pairs and rise to 
\begin{equation*}
Y = \sum_{z\in \operatorname{GL}_k(q)} \rho\left(\begin{smallmatrix} z &\textbf{1} \\ 
 & -z^{-1}\end{smallmatrix} \right)\circ f \circ g = T^{\ast}fg.
\end{equation*}
\end{proof}
\begin{Prop}
\begin{equation*}
\mathscr{P}tw\mathscr{P}wt^{-1}\mathscr{P} \subset \mathscr{P} \cup \mathscr{P}w'\mathscr{P} \cup \bigcup_{x\in Y} \mathscr{P}x\mathscr{P},
\end{equation*} 
where $Y$ can be taken as a subset of $\tilde{\mathscr{W}} - W$.
\end{Prop}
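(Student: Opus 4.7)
The plan is to reduce the new proposition to the previous one by exploiting the fact that $t$ normalizes $\mathscr{P}$. Concretely, a direct block-matrix computation shows that conjugation by $t$ preserves $\mathscr{P} = \left(\begin{smallmatrix} \GL_k(\mathcal{O}) & \mathbb{M}_{k\times k}(\mathcal{O}) \\ \mathbb{M}_{k\times k}(\mathfrak{P}) & \GL_k(\mathcal{O}) \end{smallmatrix}\right)$: if $p = \left(\begin{smallmatrix} A & B \\ C & D \end{smallmatrix}\right) \in \mathscr{P}$, then $tpt^{-1} = \left(\begin{smallmatrix} D & \varpi^{-1}C \\ \varpi B & A \end{smallmatrix}\right)$, and each entry lies in the appropriate submodule because $C\in \mathbb{M}_{k\times k}(\mathfrak{P})$ and $B\in\mathbb{M}_{k\times k}(\mathcal{O})$. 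Hence $t\mathscr{P}t^{-1} = \mathscr{P}$, so that $\mathscr{P}t = t\mathscr{P}$ and $\mathscr{P}t^{-1} = t^{-1}\mathscr{P}$.

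The second step is to move the $t$'s to the outside of the triple product. Using the normalization just established,
\begin{equation*}
\mathscr{P}tw\mathscr{P}wt^{-1}\mathscr{P} = (t\mathscr{P}) w \mathscr{P} w (\mathscr{P}t^{-1}) = t \cdot \mathscr{P}w\mathscr{P}w\mathscr{P} \cdot t^{-1}.
\end{equation*}
Applying the previous proposition yields
\begin{equation*}
\mathscr{P}tw\mathscr{P}wt^{-1}\mathscr{P} \subset t\mathscr{P}t^{-1} \;\cup\; t(\mathscr{P}w\mathscr{P})t^{-1} \;\cup\; \bigcup_{x\in X} t(\mathscr{P}x\mathscr{P})t^{-1}.
\end{equation*}
Simplifying the first two summands using the normalization gives $t\mathscr{P}t^{-1} = \mathscr{P}$ and $t(\mathscr{P}w\mathscr{P})t^{-1} = \mathscr{P}(twt^{-1})\mathscr{P} = \mathscr{P}w'\mathscr{P}$, matching exactly the first two terms in the claimed inclusion.

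It remains to argue that each of the conjugated cosets $\mathscr{P}(txt^{-1})\mathscr{P}$ for $x\in X \subset \tilde{\mathscr{W}} - W$ is of the form $\mathscr{P}y\mathscr{P}$ for a suitable $y\in\tilde{\mathscr{W}} - W$. Since $t\in \mathscr{W}$ and $\mathscr{W}$ is closed under conjugation, $txt^{-1}\in \mathscr{W}$, and therefore its double coset has a representative $y\in \tilde{\mathscr{W}}$. The representative $y$ cannot lie in $W$: if it did, then $\mathscr{P}y\mathscr{P} = \mathscr{P}txt^{-1}\mathscr{P}$ together with $t\in W$ would force $\mathscr{P}x\mathscr{P} = \mathscr{P}(t^{-1}yt)\mathscr{P}$ to be a double coset of an element of $W$ (as $W$ is a subgroup), contradicting $x\in \tilde{\mathscr{W}} - W$. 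Defining $Y$ to be the resulting subset of $\tilde{\mathscr{W}} - W$ completes the proof.

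The only mild subtlety is the last paragraph — one must verify that the $t$-conjugate of a coset not lying in $\mathscr{P}W\mathscr{P}$ again fails to lie in $\mathscr{P}W\mathscr{P}$, but this follows directly from $t\in W$ and $W$ being a subgroup. Beyond that, the heart of the argument is the one-line matrix check that $t$ normalizes $\mathscr{P}$, which is the geometric reason the whole reduction to the previous proposition succeeds.
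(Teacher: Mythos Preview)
Your proof is correct and follows essentially the same approach as the paper. The paper phrases the reduction via the length-additivity $l(tw)=l(t)+l(w)$ (which, since $l(t)=0$, amounts precisely to your observation that $t$ normalizes $\mathscr{P}$), then applies the previous proposition and notes that conjugation by $t$ preserves membership in $W$; your explicit block-matrix verification and your more careful justification of the last step are a slightly more elementary packaging of the same argument.
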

\begin{proof}
We use that $l(tw) = l(t)+l(w)$ to write
\begin{equation*}
\mathscr{P}tw\mathscr{P}wt^{-1}\mathscr{P} = \mathscr{P}t\mathscr{P}w\mathscr{P}w\mathscr{P}t^{-1}\mathscr{P} \subset \mathscr{P}t\mathscr{P}t^{-1}\mathscr{P} \cup \mathscr{P}t\mathscr{P}w\mathscr{P}t^{-1}\mathscr{P} \cup \bigcup_{x\in X} \mathscr{P}t\mathscr{P}x\mathscr{P}t^{-1}\mathscr{P}
\end{equation*}
\begin{equation*}
= \mathscr{P} \cup \mathscr{P}w'\mathscr{P} \cup \bigcup_{x\in X} \mathscr{P}txt^{-1}\mathscr{P}.
\end{equation*}
It is clear that conjugation by $t$ preserves the property of being a member of $W$ or not.
\end{proof}
\begin{Thm}
In $\mathscr{H}_R(G, \mathscr{P}, V)$ we have the identity
\begin{equation*}
[tw]_1[wt^{-1}]_1 = \tau [1]_1 + [w']^1.
\end{equation*}
\end{Thm}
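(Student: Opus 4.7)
My plan is to deduce this identity by reducing to the Case 1 theorem just proven, exploiting that $t$ has length zero in $W$. First I would observe that $l(tw) = l(t) + l(w)$ and $l(wt^{-1}) = l(w) + l(t^{-1})$, so the corollary of Theorem \ref{Theorem310} permits the factorization
\[
[tw]_1 \ast [wt^{-1}]_1 \;=\; [t]_1 \ast [w]_1 \ast [w]_1 \ast [t^{-1}]_1.
\]
Next I would substitute the Case 1 formula $[w]_1 \ast [w]_1 = \tau [1]_1 + [w]_{T^\ast}$ for the inner pair and distribute, obtaining
\[
\tau\,[t]_1 \ast [t^{-1}]_1 \;+\; [t]_1 \ast [w]_{T^\ast} \ast [t^{-1}]_1.
\]
A final use of length-additivity (via $l(tt^{-1}) = 0 = l(t) + l(t^{-1})$ and $l(twt^{-1}) = 1 = l(t) + l(w) + l(t^{-1})$) should collapse the two terms respectively into $\tau [1]_1$ and $[twt^{-1}]_{?} = [w']_?$, where the parameter $?$ must still be identified.

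The only real obstacle is verifying that this parameter equals $T^\ast$, so that the second summand is indeed $[w']^1$. This should reduce to showing that $T^\ast$ is invariant under $t$-conjugation. The key computation is $tmt^{-1} = wmw^{-1}$ for $m$ block-diagonal in $\mathcal{M}$ (so that $t$-conjugation on $\mathcal{M}$ agrees with $w$-conjugation), and when applied to $T^\ast = \sum_{g \in \GL_k(q)} \sigma\!\left(\begin{smallmatrix} g & \\ & -g^{-1}\end{smallmatrix}\right)$ this sends the $g$-summand to the $-g^{-1}$-summand, a mere reindexing. A minor subsidiary point is that the symbol $1$ appearing in $[t]_1$ and $[t^{-1}]_1$ refers to an $I_w$-element whereas the $1$ in $[1]_1$ lies in $I_1$; these can be taken compatibly via the identification $V \cong V^w$ squaring to the identity on $V$, which is available because $V = P \oplus P^\ast$ is symmetric.

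If one preferred a proof paralleling the direct template used for Case 1, one could instead invoke the preceding proposition to pin the support of $[tw]_1 \ast [wt^{-1}]_1$ to $\{1, w'\}$, then use the integral identity
\[
[tw] \ast [wt^{-1}] \;=\; [t][w][w][t^{-1}] \;=\; q^{k^2}[1] + \#\GL_k(q) [w'] \quad \text{in}\ \mathscr{H}_{\mathbb{Z}}(G, \mathscr{P}, 1),
\]
(a consequence of length-additivity and the $\mathbb{Z}$-version of Case 1), to count that there are $q^{k^2}$ admissible pairs for $\epsilon = 1$ and $\#\GL_k(q)$ for $\epsilon = w'$; explicit enumeration of these pairs in $\mathscr{P}^{(\eta^{-1})}\backslash \mathscr{P} \times \mathscr{P}^{(\delta^{-1})}\backslash \mathscr{P}$ and substitution into formula (\ref{VignerasFormula}) would then yield $X = \tau\cdot 1$ and $Y = T^\ast$ by the same mechanism as in Case 1.
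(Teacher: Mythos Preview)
Your primary approach---factoring $[tw]_1 = [t]_{\iota}\ast[w]_{\iota}$ with $\iota \in I_w$ an involution---has a genuine gap in the generality required. Section~\ref{funcHecke} treats an \emph{arbitrary} finite-dimensional cuspidal $\mathcal{M}$-representation $V$, not only the specific $V = P \oplus P^{\ast}$ of Section~\ref{sec_progen}. Since $t$ is non-diagonal, the symbol $[t]_{\iota}$ only makes sense for $\iota \in I_w = \operatorname{Hom}_{\mathcal{M}}(V^w, V)$, and for the factorization to recover $[tw]_1$ you need $\iota^2 = 1 \in I_1$. But for a general cuspidal $V$ (say $V = \rho_1 \boxtimes \rho_2$ with $\rho_1 \not\cong \rho_2$) one can have $I_w = 0$, so no such $\iota$ exists. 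Your parenthetical appeal to the symmetry of $P \oplus P^{\ast}$ therefore imports a hypothesis that the theorem does not assume. Note that the paper's own reductions via $t$ (e.g.\ in the proof of Case~3) are always arranged so that the $t^{\pm 1}$-factor is paired with a \emph{diagonal} partner, keeping all subscripts in $I_1$; your pairing $[t]_{\iota}\ast[w]_{\iota}$ does not have this safeguard.

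Your alternative outline is essentially the paper's own argument: use the preceding proposition to restrict the support to $\{1, w'\}$, count admissible pairs in $\mathscr{H}_{\mathbb{Z}}(G,\mathscr{P},1)$ to know how many to look for, and then identify them explicitly. The paper carries this out directly: fixing $k_1 = 1$ exhausts the $q^{k^2}$ pairs for $\epsilon = 1$ (giving $X = \tau$), while for $\epsilon = w'$ one takes $k_1 = \alpha_x$, $k_2 = \beta_y$ and checks that the admissibility condition $w' k_2^{-1}(tw) k_1^{-1}(wt^{-1}) \in \mathscr{P}$ reduces to the matrix condition $x,y \in \GL_k(q)$ with $xy = -1$, yielding $Y = T^{\ast}$. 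Your sketch omits this explicit enumeration, and the phrase ``by the same mechanism as in Case~1'' glosses over the fact that the coset representatives here are of mixed type ($\alpha_x$ for $k_1$ but $\beta_y$ for $k_2$), unlike Case~1; the computation is parallel in spirit but not identical.
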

\begin{proof}
By the proposition and the same reasoning as above, we see that
$[tw]_1 [wt^{-1}]_1 = [1]_X + [w']_Y$, and that we have to 
locate $q^{(k^2)}$ admissible pairs in formula (\ref{VignerasFormula})
in order to calculate $X$, which can again achieved by fixing
$k_1 = 1$, yielding $X = q^{(k^2)}$.
For the second summand, we can take $k_1 = \alpha_x$
and $k_2 = \beta_y$. We have
to work out for which values of $x,y$ we have
\begin{equation*}
twt^{-1}k_2^{-1}twk_1^{-1}wt^{-1} \in \mathscr{P}.
\end{equation*}
This boils down to
\begin{equation*}
\left(\begin{smallmatrix} y & \varpi^{-1}(\textbf{1}+xy)\\ \varpi & x \end{smallmatrix}\right) \in \mathscr{P}.
\end{equation*}
This is clearly fulfilled if and only if $x, y\in \operatorname{GL}_k(q)$ and fulfill
$xy = -1$. This gives the second summand as $[twt^{-1}]_{T^{\ast}} = [w']^1$.
\end{proof}
\begin{Cor}[Case 2: $\eta = tw, \delta = tw'$]
\begin{equation*}
[tw]_f[wt^{-1}]_g = \tau[1]_{fg} + [twt^{-1}]_{fg}^1.
\end{equation*}
\end{Cor}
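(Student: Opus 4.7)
The plan is to mimic the proof of Case 1 almost verbatim, invoking the remark just after the statement of Vignéras's formula: the admissible pairs $(k_1,k_2)$ in (\ref{VignerasFormula}) and the associated elements $k_0$ are determined by the group-theoretic data $\eta,\delta,\epsilon$ alone, and in particular are independent of $R$, $V$, $f$, $g$. Hence we may transport the computation already performed in the preceding theorem (where $f=g=1$) and reinterpret each occurrence of $\rho(k_0)\circ 1\circ \rho(k_1)\circ 1\circ\rho(k_2)=\rho(k_0k_1k_2)$ as $\rho(k_0)\circ f\circ \rho(k_1)\circ g\circ \rho(k_2)$.

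The first step is to appeal to the proposition on $\mathscr{P}tw\mathscr{P}wt^{-1}\mathscr{P}$ to reduce the support of $[tw]_f[wt^{-1}]_g$ to $\mathscr{P}\cup\mathscr{P}w'\mathscr{P}$, so that the product is of the form $[1]_X+[w']_Y$ for some $X\in I_1$ and $Y\in I_w$. The second step is to identify $X$: from the previous theorem we know that in the universal case there are exactly $q^{k^2}$ admissible pairs contributing to $\epsilon=1$, given by $k_1=1$ and $k_2$ running through $\mathscr{P}^{(\eta^{-1})}\backslash\mathscr{P}$, with corresponding $k_0\in\mathscr{P}$. Plugging these into (\ref{VignerasFormula}) and using the equivariance (\ref{HeckeChar}) to push the $\rho(k_0)$ and $\rho(k_2)$ factors to the identity (exactly as in the Case~1 computation of $X=q^{k^2}f\circ g$), we obtain $X=\tau\cdot fg$.

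For the third step, the $[w']$-summand, the preceding theorem located the admissible pairs as $(k_1,k_2)=(\alpha_x,\beta_y)$ with $x,y\in\operatorname{GL}_k(q)$ satisfying $xy=-\mathbf{1}$, and computed the associated $k_0=twt^{-1}\beta_y^{-1}twk_1^{-1}wt^{-1}$ to be of block form $\bigl(\begin{smallmatrix} z & *\\ * & -z^{-1}\end{smallmatrix}\bigr)$. Summing $\rho(k_0)\circ f\circ\rho(\alpha_x)\circ g\circ\rho(\beta_y)$ over these pairs and simplifying via (\ref{HeckeChar}) collapses the sum to $T^{\ast}\circ fg$, giving $Y=T^{\ast}fg$ and hence $[w']_Y=[w']_{T^{\ast}fg}=[twt^{-1}]^1_{fg}$ in the notation of the definition preceding Theorem \ref{weiobg084ht802hrth3rieh98z25zt934hg98rghwibte32}.

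The only non-routine part is bookkeeping: one must carefully verify that the various $\rho(k_i)$ factors, which were invisible scalars in the universal case, really do telescope to leave just $fg$ in the middle after applying (\ref{HeckeChar}); this is where the assumption $f,g\in I_w$ and the explicit shape of $k_0$ matter. Since the combinatorics of admissible pairs is identical to that of Case~1, no new coset enumeration is needed, and the corollary follows by collecting the two contributions.
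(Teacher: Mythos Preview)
Your argument is correct, but it takes a noticeably longer route than the paper's. The paper's proof is a single line: using the already-established length-additive case of Theorem~\ref{weiobg084ht802hrth3rieh98z25zt934hg98rghwibte32}, one factors
\[
[tw]_f[wt^{-1}]_g = [1]_f\,[tw]_1\,[wt^{-1}]_1\,[1]_g,
\]
applies the preceding theorem to the middle product to get $\tau[1]_1 + [w']^1$, and then reattaches $[1]_f$ and $[1]_g$ (using again length-additivity and the commutation $fT^\ast = T^\ast f$). This avoids any new coset or admissible-pair analysis. Your approach, re-running the Vign\'eras formula with general $f,g$ in place of $1,1$, works because the chosen representatives $\alpha_x,\beta_y$ act trivially on $V$; it is self-contained but duplicates effort that the factorisation trick bypasses.

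One small correction to your justification: the reason the $\rho(k_1)$ and $\rho(k_2)$ factors collapse is not the equivariance condition~(\ref{HeckeChar}) but rather that the representatives $\alpha_x,\beta_y$ lie in the pro-unipotent radical $\mathscr{P}(1)$, on which $\rho$ is trivial since $V$ is inflated from $\mathcal{M}$. With this adjustment your bookkeeping goes through, and in particular the $[w']$-summand becomes $\sum_x \rho(k_0)\circ f\circ g = T^\ast fg$ as claimed.
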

\begin{proof}
Write
\begin{equation*}
[tw]_f[wt^{-1}]_g = [1]_f [tw]_1 [wt^{-1}]_1 [1]_g
\end{equation*}
and apply the theorem above.
\end{proof}
\begin{Cor}[Case 3: $\eta = \delta = w'$]
\begin{equation*}
[w']_f[w']_g = \tau[1]_{fg} + [w']_{fg}^1.
\end{equation*}
\end{Cor}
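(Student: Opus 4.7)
The plan is to reduce Case~3 directly to Case~2 (the corollary just proved above) by exploiting the identity $w' = twt^{-1}$ together with the fact that $l(t) = l(t^{-1}) = 0$, which makes products involving $t^{\pm 1}$ length-adding and hence controlled by the first (already established) case of Theorem~\ref{weiobg084ht802hrth3rieh98z25zt934hg98rghwibte32}.

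First I would rewrite each factor $[w']_f$ as a length-adding product. Since $l(tw) + l(t^{-1}) = 1 + 0 = 1 = l(w')$ and $l(t) + l(wt^{-1}) = 0 + 1 = 1 = l(w')$, the length-adding part of Theorem~\ref{weiobg084ht802hrth3rieh98z25zt934hg98rghwibte32} (applied with the trivial coefficient on the length-zero factor) yields
\begin{equation*}
[w']_f \;=\; [tw]_f \, [t^{-1}]_1 \qquad \text{and} \qquad [w']_g \;=\; [t]_1 \, [wt^{-1}]_g .
\end{equation*}
Multiplying and using the (trivially length-adding) identity $[t^{-1}]_1 [t]_1 = [1]_1$, which follows from the same part of the theorem applied with $\eta = t^{-1}, \delta = t$, the two inner factors collapse and give
\begin{equation*}
[w']_f \, [w']_g \;=\; [tw]_f \, [wt^{-1}]_g .
\end{equation*}

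Finally, I would invoke the preceding Case~2 corollary on the right-hand side to conclude
\begin{equation*}
[w']_f \, [w']_g \;=\; \tau\, [1]_{fg} \;+\; [twt^{-1}]_{fg}^{\,1} \;=\; \tau\, [1]_{fg} \;+\; [w']_{fg}^{\,1} ,
\end{equation*}
which is exactly the claimed identity. There is no real obstacle here: all the representation-theoretic work was carried out in Case~1 and Case~2, and what remains is a purely formal manipulation inside the Hecke algebra using that $t$ has length zero and that conjugation by $t$ sends $w$ to $w'$. The only small thing to verify is that the length-adding case of the theorem is legitimately applicable with a length-zero factor carrying the trivial coefficient $1 \in I_1$, but this is immediate from the statement.
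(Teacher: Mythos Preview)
Your proof is correct and follows essentially the same route as the paper. Both use $w' = twt^{-1}$ and the length-zero property of $t$ to reduce to Case~2; the only cosmetic difference is that the paper places $f,g$ on the length-zero factors (writing $[w']_f[w']_g = [tw]_1[t^{-1}]_f[t]_g[wt^{-1}]_1 = [tw]_1[wt^{-1}]_1[1]_{fg}$) and then invokes the bare Theorem $[tw]_1[wt^{-1}]_1 = \tau[1]_1 + [w']^1$, whereas you keep $f,g$ on $[tw]$ and $[wt^{-1}]$ and cite the Case~2 Corollary directly.
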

\begin{proof}
Write
\begin{equation*}
[w']_f[w']_g = 
[tw]_1[t^{-1}]_f[t]_g[wt^{-1}]_1 = 
[tw]_1[1]_{fg}[wt^{-1}]_1 
= [tw]_1 [wt^{-1}]_1 [1]_{fg}
\end{equation*}
and apply the theorem above.
\end{proof}
\begin{Prop}
\begin{equation*}
\mathscr{P}wt\mathscr{P}t^{-1}w\mathscr{P} \subset \mathscr{P} \cup \mathscr{P}w\mathscr{P} \cup \bigcup_{x\in Y} \mathscr{P}x\mathscr{P},
\end{equation*} 
where $Y$ can be taken as a subset of $\tilde{\mathscr{W}} - W$.
\end{Prop}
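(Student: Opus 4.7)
The plan is to follow closely the proof of the earlier proposition for $\mathscr{P}tw\mathscr{P}wt^{-1}\mathscr{P}$, but exploiting the structural difference that here the two copies of $t^{\pm 1}$ lie in the \emph{interior} of the product rather than flanking it. As a result, the $t$'s cancel directly rather than conjugating $w$ into $w'$, and the proof becomes strictly easier than its mirror.

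First, using $l(t)=l(t^{-1})=0$, one has additivity of lengths $l(wt)=l(w)+l(t)=1$ and $l(t^{-1}w)=l(t^{-1})+l(w)=1$, so Lemma 5.6.12 of \cite{bushnell1993admissible} (applied via the identification of $l$ with a non-zero multiple of $l'$ that was used in the proof of Theorem \ref{Theorem310}) yields
\begin{equation*}
\mathscr{P}wt\mathscr{P}=\mathscr{P}w\mathscr{P}t\mathscr{P},\qquad \mathscr{P}t^{-1}w\mathscr{P}=\mathscr{P}t^{-1}\mathscr{P}w\mathscr{P}.
\end{equation*}
Next, $t$ normalises $\mathscr{P}$ --- a one-line block-matrix verification using the explicit form of $t$ and $\mathscr{P}$, alternatively a consequence of $t$ being a length-zero element of the affine Weyl group --- so $\mathscr{P}t\mathscr{P}=t\mathscr{P}=\mathscr{P}t$ and consequently
\begin{equation*}
\mathscr{P}t\mathscr{P}t^{-1}\mathscr{P}=t\mathscr{P}t^{-1}\mathscr{P}=\mathscr{P}.
\end{equation*}

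Combining these two facts, the full quintuple product telescopes:
\begin{equation*}
\mathscr{P}wt\mathscr{P}t^{-1}w\mathscr{P}=\mathscr{P}w\mathscr{P}t\mathscr{P}t^{-1}\mathscr{P}w\mathscr{P}=\mathscr{P}w\mathscr{P}\cdot\mathscr{P}\cdot\mathscr{P}w\mathscr{P}=\mathscr{P}w\mathscr{P}w\mathscr{P}.
\end{equation*}
At this point the already-established proposition for $\mathscr{P}w\mathscr{P}w\mathscr{P}$ (proved just before Case 1) is directly applicable and gives the inclusion into $\mathscr{P}\cup\mathscr{P}w\mathscr{P}\cup\bigcup_{x\in X}\mathscr{P}x\mathscr{P}$ with $X\subset\tilde{\mathscr{W}}-W$; taking $Y:=X$ finishes the argument.

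There is really no genuine obstacle here: the only step that is not purely formal is the verification that $t$ normalises $\mathscr{P}$, and that is routine. It is worth noting that the statement features $\mathscr{P}w\mathscr{P}$ rather than $\mathscr{P}w'\mathscr{P}$ in the middle term precisely because the internal position of $t,t^{-1}$ produces cancellation instead of the conjugation $w\mapsto w'=twt^{-1}$ that appeared in the mirror case $\mathscr{P}tw\mathscr{P}wt^{-1}\mathscr{P}$.
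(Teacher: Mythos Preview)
Your proof is correct and follows exactly the route the paper takes: reduce $\mathscr{P}wt\mathscr{P}t^{-1}w\mathscr{P}$ to $\mathscr{P}w\mathscr{P}t\mathscr{P}t^{-1}\mathscr{P}w\mathscr{P}$ via length additivity, collapse the middle using that $t$ normalises $\mathscr{P}$, and then invoke the earlier proposition for $\mathscr{P}w\mathscr{P}w\mathscr{P}$. The paper simply writes down the chain of equalities without spelling out the justifications you provide, so your version is in fact more explicit than the original.
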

\begin{proof}
This becomes clear when writing
\begin{equation*}
\mathscr{P}wt\mathscr{P}t^{-1}w\mathscr{P} = \mathscr{P}w\mathscr{P}t\mathscr{P}t^{-1}\mathscr{P}w\mathscr{P} = \mathscr{P}w\mathscr{P}w\mathscr{P}.
\end{equation*}
\end{proof}

\begin{Thm}
\begin{equation*}
[wt^{-1}]_1 [tw]_1 = \tau[1]_1 + [w]^1.
\end{equation*}
\end{Thm}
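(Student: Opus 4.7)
The plan is to bypass any direct admissible-pair bookkeeping in formula (\ref{VignerasFormula}) by reducing to the already-proven Case~1 ($[w]_f[w]_g$) via length-additive factorizations. Because $l(t) = l(t^{-1}) = 0$ and $l(w) = 1$, each of the pairs $(w, t^{-1})$, $(t, w)$, and $(t^{-1}, t)$ satisfies $l(\eta\delta) = l(\eta) + l(\delta)$. The Corollary following Theorem~\ref{Theorem310} therefore yields the three length-additive identities
\[ [w]_1[t^{-1}]_1 = [wt^{-1}]_1, \qquad [t]_1[w]_1 = [tw]_1, \qquad [t^{-1}]_1[t]_1 = [1]_1. \]

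Substituting into the product under study, we would get
\[ [wt^{-1}]_1[tw]_1 \;=\; [w]_1 [t^{-1}]_1 [t]_1 [w]_1 \;=\; [w]_1 [1]_1 [w]_1 \;=\; [w]_1 [w]_1. \]
Invoking Case~1 of the theorem with $f = g = 1$ then gives $[w]_1[w]_1 = \tau[1]_1 + [w]_1^1 = \tau[1]_1 + [w]^1$, which is exactly the identity we want.

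No substantial obstacle arises; the only delicate point is checking that multiplication by $t^{\pm 1}$ preserves length, which is immediate from the fact that $t$ represents the length-zero generator of the extended affine Weyl group of $\GL_2$ (so $l(wt^{-1}) = l(w') = l(w) = 1$ and analogously for $tw$). Note that the preceding Proposition on $\mathscr{P}wt\mathscr{P}t^{-1}w\mathscr{P}$ is not strictly needed for this route: it would come in only if one preferred the more laborious admissible-pair count \emph{\`a la} the direct proof of Case~1. The present reduction is essentially one line once the factorizations are spelled out.
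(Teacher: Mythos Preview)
Your reduction has a genuine gap: the elements $[w]_1$, $[t]_1$, and $[t^{-1}]_1$ that you invoke do not exist in $\mathscr{H}_R(G,\mathscr{P},V)$ in general. Recall from the paper's identification
\[
\mathscr{H}_R(\mathscr{P}\eta\mathscr{P},\mathscr{P},V)\;\cong\;
\begin{cases}
I_1=\operatorname{End}_{\mathcal{M}}(V) & \text{$\eta$ diagonal},\\
I_w=\operatorname{Hom}_{\mathcal{M}}(V^w,V) & \text{$\eta$ non-diagonal},
\end{cases}
\]
that the subscript in $[\eta]_f$ must lie in $I_w$ whenever $\eta$ is non-diagonal. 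Now $w$ is non-diagonal, and so are $t$ and $t^{-1}$ (indeed $t=w\cdot\operatorname{diag}(\varpi,\dots,\varpi,1,\dots,1)$ has $v=w$). The identity map $1_V$ lies in $I_w$ only if $\sigma(m_1,m_2)=\sigma(m_2,m_1)$ for all $(m_1,m_2)\in\mathcal{M}$, which fails already for $V=\rho_0\boxtimes\rho_0$ with $\dim\rho_0>1$, and certainly for the general cuspidal $V$ allowed in Section~\ref{funcHecke}. So your three ``length-additive identities'' are formal equalities between symbols that are not elements of the Hecke algebra, and the application of Case~1 with $f=g=1$ is illegitimate.

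The idea can be rescued, but only under an extra hypothesis. If $I_w$ contains an \emph{invertible} element $f$ (equivalently $V^w\cong V$ as $\mathcal{M}$-modules), then $f^{-1}\in I_w$ as well, and one may write
\[
[wt^{-1}]_1=[w]_f[t^{-1}]_{f^{-1}},\qquad [tw]_1=[t]_f[w]_{f^{-1}},
\]
whence $[wt^{-1}]_1[tw]_1=[w]_f[1]_1[w]_{f^{-1}}=[w]_f[w]_{f^{-1}}$, and Case~1 gives $\tau[1]_{ff^{-1}}+[w]_{ff^{-1}}^1=\tau[1]_1+[w]^1$. This is how the paper proves the Corollaries for Cases~2 and~3 (note that there the length-zero factors $[tw]_1$, $[wt^{-1}]_1$ carried the subscript $1\in I_1$, which is legitimate since $tw$ and $wt^{-1}$ are diagonal). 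But Section~\ref{funcHecke} treats an arbitrary cuspidal $V$, for which $I_w$ may even be zero; the paper's proof (``analogous to the theorem above'') therefore repeats the direct admissible-pair computation via formula~(\ref{VignerasFormula}), using the preceding Proposition on $\mathscr{P}wt\mathscr{P}t^{-1}w\mathscr{P}=\mathscr{P}w\mathscr{P}w\mathscr{P}$ to control the support. That route needs no invertibility assumption on $I_w$.
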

\begin{proof}
Analogous to the theorem above.
\end{proof}
\begin{Cor}[Case 4: $\eta = tw', \delta = tw$]
\begin{equation*}
[wt^{-1}]_f [tw]_g = \tau[1]_{fg} + [w]_{fg}^1.
\end{equation*}
\end{Cor}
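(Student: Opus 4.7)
The proof will mirror the strategy used for Cases 2 and 3: reduce to the unadorned version (the theorem just proved) by extracting the subscripts $f$ and $g$ via length-additive multiplications on both sides.

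Concretely, the plan is to factor
\begin{equation*}
[wt^{-1}]_f [tw]_g = [1]_f \cdot [wt^{-1}]_1 \cdot [tw]_1 \cdot [1]_g.
\end{equation*}
This factorization is justified by the already-established first part of Theorem \ref{weiobg084ht802hrth3rieh98z25zt934hg98rghwibte32} (its corollary for length-adding products): since $l(1\cdot wt^{-1}) = 0 + l(wt^{-1})$ and $l(tw \cdot 1) = l(tw) + 0$, we have $[1]_f[wt^{-1}]_1 = [wt^{-1}]_f$ and $[tw]_1[1]_g = [tw]_g$.

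Next, I would apply the preceding theorem $[wt^{-1}]_1[tw]_1 = \tau[1]_1 + [w]^1$ to the two middle factors and distribute:
\begin{equation*}
[wt^{-1}]_f[tw]_g = \tau\,[1]_f[1]_1[1]_g + [1]_f[w]^1[1]_g.
\end{equation*}
The first term collapses trivially to $\tau[1]_{fg}$ by the length-additive rule. For the second term, write $[w]^1 = [w]_{T^{\ast}}$ and use length-additivity twice to obtain $[1]_f[w]_{T^{\ast}}[1]_g = [w]_{fT^{\ast}g}$. Then Proposition (ii) lets us commute $T^{\ast}$ past $f\in I_1$ (with the output landing in $I_w$, as required for the subscript of $[w]$), giving $fT^{\ast}g = T^{\ast}fg$ and hence $[w]_{fT^{\ast}g} = [w]^1_{fg}$. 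Summing the two contributions yields the claimed identity.

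There is no real obstacle: every ingredient (length-additive product rule, the preceding theorem on $[wt^{-1}]_1[tw]_1$, and the commutation $fT^{\ast} = T^{\ast}f$) has been established above, and the argument is a verbatim adaptation of the proofs of Corollaries for Cases 2 and 3.
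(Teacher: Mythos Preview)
Your proposal is correct and follows exactly the approach the paper intends: the paper omits an explicit proof for Case~4, but the pattern established in the proof of Case~2 (factor out $[1]_f$ on the left and $[1]_g$ on the right via length-additivity, then apply the preceding theorem on $[wt^{-1}]_1[tw]_1$) is precisely what you carry out. The additional justification you supply for commuting $T^{\ast}$ past $f$ via Proposition~(ii) is the natural unpacking of the step the paper leaves implicit.
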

\begin{Thm}[Case 5: $\eta = w, \delta = wt^{-1}$]
\begin{equation*}
[w]_f [wt^{-1}]_g = \tau[t^{-1}]_{fg} + [wt^{-1}]_{fg}^1.
\end{equation*}
\end{Thm}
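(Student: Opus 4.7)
The plan is to reduce Case~5 to Case~1 by associativity of the convolution together with the length-additive multiplication rule (the Corollary following Theorem~\ref{Theorem310}). Since $l(t^{-1}) = 0$, we have $l(wt^{-1}) = l(w) + l(t^{-1})$, so that Corollary factors
\[
[wt^{-1}]_g \;=\; [w]_g \ast [t^{-1}]_{\operatorname{id}_V},
\]
where $g \in I_w$ and $\operatorname{id}_V \in I_1$. Multiplying on the left by $[w]_f$ and using associativity of the Hecke algebra,
\[
[w]_f \ast [wt^{-1}]_g \;=\; \bigl([w]_f \ast [w]_g\bigr) \ast [t^{-1}]_{\operatorname{id}_V}.
\]

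Case~1 (already proved) evaluates the bracketed factor to $\tau[1]_{fg} + [w]_{fg}^{1}$, with $fg \in I_1$ and $T^{\ast}(fg) \in I_w$ by part~(ii) of the Proposition on $T^{\ast}$. Since both $1 \cdot t^{-1}$ and $w \cdot t^{-1}$ are length-additive (trivially, as $l(t^{-1}) = 0$), a second application of the same Corollary gives
\[
[1]_{fg} \ast [t^{-1}]_{\operatorname{id}_V} \;=\; [t^{-1}]_{fg}, \qquad
[w]_{fg}^{1} \ast [t^{-1}]_{\operatorname{id}_V} \;=\; [wt^{-1}]_{T^{\ast} fg} \;=\; [wt^{-1}]_{fg}^{1},
\]
and summing with the factor $\tau$ yields the asserted identity.

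The only care needed is parity bookkeeping for the decorated brackets: at each step, the coefficient of $[\eta]_{(-)}$ must lie in $I_1$ for diagonal $\eta$ and in $I_w$ for non-diagonal $\eta$, and the superscript~$1$ must be applied to an element of the appropriate space. Both checks are immediate from part~(ii) of the Proposition on $T^{\ast}$, which says that $T^{\ast}$ swaps $I_1$ and $I_w$. I do not anticipate any genuine obstacle: once Case~1 is in hand, Case~5 is a purely formal consequence of shifting the length-zero element $t^{-1}$ between the two factors.
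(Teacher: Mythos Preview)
Your argument is correct. The factorisation $[wt^{-1}]_g = [w]_g\ast[t^{-1}]_1$ is legitimate because $l(wt^{-1}) = 1 = l(w) + l(t^{-1})$, and the length-additive multiplication rule (the Corollary to Theorem~\ref{Theorem310}) is established independently of, and prior to, the eight case computations. Associativity together with Case~1 then yields the result exactly as you describe; the parity bookkeeping is fine.

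The paper takes a different route here: it redoes the direct computation with formula~(\ref{VignerasFormula}), first bounding the support via $\mathscr{P}w\mathscr{P}wt^{-1}\mathscr{P} = \mathscr{P}w\mathscr{P}w\mathscr{P}t^{-1}\mathscr{P}$, then counting admissible pairs in $\mathscr{H}_{\mathbb{Z}}(G,\mathscr{P},1)$ to get the coefficient $\tau$, and finally solving the matrix condition $wt^{-1}k_2^{-1}twk_1^{-1}w\in\mathscr{P}$ to identify the $T^{\ast}$-term. This is essentially a repetition of the Case~1 calculation, shifted by $t^{-1}$. Your reduction makes that shift explicit at the level of the algebra rather than at the level of coset representatives, which is shorter and entirely in the spirit of how the paper itself derives Cases~2, 3, 4, 6 and 8 as corollaries of a direct computation. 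The paper's direct approach has the minor advantage of being self-contained (one sees the admissible pairs for this particular product), but your argument is the cleaner one and incurs no loss.
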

\begin{proof}
The reasoning is now standard: We first check that
\begin{equation*}
\mathscr{P}w\mathscr{P}wt^{-1}\mathscr{P} = \mathscr{P}w\mathscr{P}w\mathscr{P}t^{-1}\mathscr{P} \subset 
\mathscr{P}t^{-1}\mathscr{P} \cup \mathscr{P}wt^{-1}\mathscr{P} \cup \bigcup_{x\in Y} \mathscr{P}x\mathscr{P},
\end{equation*}
where $Y\cap W = \emptyset$ (this follows from the observation that multiplication by
$t^{-1}$ does respect the property of being contained in $W$ or not). Now, in 
$\mathscr{H}_{\mathbb{Z}}(G, \mathscr{P}, 1)$, we have $[w][wt^{-1}] = \lambda[t^{-1}] + \ldots$ 
with $\lambda = \#(\mathscr{P}w\mathscr{P} \cap t^{-1}\mathscr{P}tw\mathscr{P})\backslash \mathscr{P}$. We use that $\mathscr{P}$ is stable under 
conjugation with $t$, so $\lambda = q^{(n^2)}$, and we can use the same reasoning as 
above to establish the first summand as $[t^{-1}]_{\tau fg}$.\\
For the second summand, we start with $k_1 = \alpha_x, k_2 = \beta_y$ and have
to figure out when $wt^{-1}k_2^{-1}twk_1^{-1}w \in \mathscr{P}$. This boils down to the condition
\begin{equation*}
\left( \begin{smallmatrix}
x & \textbf{1} \\ \textbf{1}+xy & y
\end{smallmatrix}\right)\in \mathscr{P}.
\end{equation*}
We get $[wt^{-1}]_{fg}^1$.

\end{proof}
\begin{Cor}[Case 6: $\eta = t^{-1}w', \delta = w'$]
\begin{equation*}
[t^{-1}w']_f [w']_g = \tau[t^{-1}]_{fg} + [t^{-1}w']_{fg}^1.
\end{equation*}
\end{Cor}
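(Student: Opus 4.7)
My plan is to mirror the derivation of the corollaries for Cases~2 and 3 from their underlying theorems: decompose both $[t^{-1}w']_f$ and $[w']_g$ as length-additive products of simpler Hecke elements, so that the Case~4 theorem $[wt^{-1}]_1[tw]_1 = \tau[1]_1 + [w]^1$ can be applied to an inner, length-non-additive factor.

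The key identity is $t^{-1}w' = wt^{-1}$, immediate from $w' = twt^{-1}$. Combined with the length-additive factorizations $[wt^{-1}]_f = [1]_f\,[wt^{-1}]_1$ and $[w']_g = [tw]_1\,[t^{-1}]_g$ (both valid by the first clause of Theorem~\ref{weiobg084ht802hrth3rieh98z25zt934hg98rghwibte32}, since $l(1)+l(wt^{-1})=1=l(wt^{-1})$ and $l(tw)+l(t^{-1})=1=l(w')$), I would rewrite
\begin{equation*}
[t^{-1}w']_f\,[w']_g \;=\; [1]_f\,\bigl([wt^{-1}]_1\,[tw]_1\bigr)\,[t^{-1}]_g
\end{equation*}
and substitute the Case~4 theorem. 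After distributing, each of the two resulting products is again length-additive (using $l(1)+l(t^{-1})=0$ and $l(w)+l(t^{-1})=1=l(wt^{-1})$), so collapsing via the first clause of the main theorem yields $\tau[t^{-1}]_{fg} + [wt^{-1}]_{fT^{\ast}g}$.

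The last step is to recognise $[wt^{-1}]_{fT^{\ast}g} = [wt^{-1}]_{T^{\ast}(fg)} = [t^{-1}w']_{fg}^1$, which comes down to property~(ii) of the proposition on $T^{\ast}$ (namely $fT^{\ast}=T^{\ast}f$ for $f\in I_w$). No genuine obstacle is anticipated: the argument is routine bookkeeping on top of Cases~1 and~4, entirely analogous to the derivations already given for Cases~2 and~3.
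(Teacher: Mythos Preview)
Your proof is correct and follows the same general strategy as the paper---decompose into length-additive pieces and invoke an earlier case---but you reduce to a different earlier result. The paper writes
\[
[t^{-1}w']_f[w']_g = [wt^{-1}]_f[t]_g[wt^{-1}]_1 = [w]_{fg}[wt^{-1}]_1
\]
and then applies Case~5 (the immediately preceding theorem, $[w]_f[wt^{-1}]_g = \tau[t^{-1}]_{fg} + [wt^{-1}]_{fg}^1$) directly. You instead sandwich the theorem underlying Case~4, $[wt^{-1}]_1[tw]_1 = \tau[1]_1 + [w]^1$, between $[1]_f$ and $[t^{-1}]_g$. The paper's route is marginally shorter since Case~5 already has the $T^{\ast}$ absorbed into the subscript, whereas you have to invoke the commutation $fT^{\ast}=T^{\ast}f$ separately at the end; but both arguments are entirely valid and of the same spirit.
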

\begin{proof}
This follows from the above when writing
\begin{equation*}
[t^{-1}w']_f [w']_g = [wt^{-1}]_f [t]_g[wt^{-1}]_1 = [w]_{fg} [wt^{-1}]_1.
\end{equation*}
\end{proof}
The following is proved completely analogously:
\begin{Thm}[Case 7: $\eta = tw, \delta = w$]
\begin{equation*}
[tw]_f [w]_g = \tau[t]_{fg} + [tw]_{fg}^1.
\end{equation*}
\end{Thm}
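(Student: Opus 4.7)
The plan is to reduce the identity to the already-established Case~1 by factoring $[t]_1$ out on the left. Since $l(t) = 0$, any product of the form $[t]_1 \ast [\xi]_h$ with $\xi\in W$ is automatically length-additive, so the length-addition corollary collapses it to $[t]_1 \ast [\xi]_h = [t\xi]_h$. Applying this corollary three times will reduce the whole computation to the Case~1 formula $[w]_f \ast [w]_g = \tau\,[1]_{fg} + [w]^1_{fg}$.

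Concretely, I first rewrite
\[
[tw]_f \ast [w]_g \;=\; [t]_1 \ast [w]_f \ast [w]_g
\]
using $[t]_1 \ast [w]_f = [tw]_f$. Case~1 then supplies $[w]_f \ast [w]_g = \tau\,[1]_{fg} + [w]^1_{fg}$, and I distribute the outer $[t]_1$: length-additivity yields $[t]_1 \ast [1]_{fg} = [t]_{fg}$ and $[t]_1 \ast [w]^1_{fg} = [t]_1 \ast [w]_{T^\ast fg} = [tw]_{T^\ast fg} = [tw]^1_{fg}$, which together produce the claimed identity.

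There is essentially no obstacle here, since no new geometric input beyond Case~1 is required; the only bookkeeping point is that, by the definitional convention of the paper, the shorthand $[\,\cdot\,]^1$ commutes through a length-additive left multiplication by $[t]_1$. As an alternative that matches the paper's Case~5 template, one could instead verify $\mathscr{P}tw\mathscr{P}w\mathscr{P}\subset \mathscr{P}t \cup \mathscr{P}tw\mathscr{P}\cup(\text{cosets outside }W)$ directly from $l(tw) = l(t) + l(w)$ together with the Case~1 support proposition, count admissible pairs to extract $\tau$ as the coefficient of $[t]$, and parametrise $k_1 = \alpha_{x'}$, $k_2 = \alpha_x$ (noting that $\mathscr{P}^{(wt^{-1})}$ coincides with $\mathscr{P}^{(w)}$, so both sets of coset representatives are of $\alpha$-type) to check that $k_0 = tw\,\alpha_x^{-1}w\alpha_{x'}^{-1}wt^{-1}\in\mathscr{P}$ forces $x\in\GL_k(\mathbb{F}_q)$ and $x' = -x^{-1}$, whence $T^\ast$ reappears after the substitution $g = -x$.
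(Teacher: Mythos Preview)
Your proposal is correct. Your primary argument---factor $[tw]_f = [t]_1\ast[w]_f$ via the length-additive corollary (using $l(t)=0$), apply Case~1 to $[w]_f\ast[w]_g$, then distribute $[t]_1$ across the result---is valid and shorter than what the paper does. The paper instead states that Case~7 is ``proved completely analogously'' to Case~5, i.e.\ by a direct computation: determine the relevant double-coset support $\mathscr{P}tw\mathscr{P}w\mathscr{P}$, read off the coefficient $\tau$ from the integral Hecke algebra, and enumerate admissible pairs in formula~(\ref{VignerasFormula}) to obtain the $T^\ast$-term. Your alternative sketch is exactly this route, and the details you give (both coset sets are of $\alpha$-type since $\mathscr{P}^{(wt^{-1})}=\mathscr{P}^{(w)}$; the membership condition forces $x\in\GL_k(\mathbb{F}_q)$, $x'=-x^{-1}$; the substitution $g=-x$ recovers $T^\ast$) are right. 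So your main argument is a genuine simplification over the paper's intended proof, while your alternative reproduces it; note that the same factoring trick would equally shorten the paper's treatment of Case~5.
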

\begin{Cor}[Case 8: $\eta = w', \delta = w't$]
\begin{equation*}
[w']_f  [w't]_g = \tau[t]_{fg} + [w't]_{fg}^1.
\end{equation*}
\end{Cor}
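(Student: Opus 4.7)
The plan is to deduce this corollary from Case~7 in exactly the same way Case~6 was deduced from Case~5. The key identity is $w't = twt^{-1}\cdot t = tw$, which holds in $W$; it lets me rewrite $[w't]_g = [tw]_g$ on the left-hand side.

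Next I would decompose $[w']_f = [tw]_f\,[t^{-1}]_1$. This is legitimate because $tw\cdot t^{-1} = w'$ and the lengths add, $l(tw)+l(t^{-1}) = 1+0 = 1 = l(w')$, so the length-adding corollary (the first half of Theorem~\ref{weiobg084ht802hrth3rieh98z25zt934hg98rghwibte32}) applies. Substituting both rewrites gives
\begin{equation*}
[w']_f\,[w't]_g \;=\; [tw]_f\,[t^{-1}]_1\,[tw]_g,
\end{equation*}
and the middle pair again combines by length-addition, since $l(t^{-1})+l(tw)=0+1=1=l(w)$ and $t^{-1}\cdot tw = w$; the result is $[w]_g$. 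The expression therefore collapses to $[tw]_f\,[w]_g$.

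Finally I apply Case~7, which yields $[tw]_f\,[w]_g = \tau\,[t]_{fg} + [tw]^1_{fg}$, and use $tw = w't$ once more to rewrite $[tw]^1_{fg} = [w't]^1_{fg}$, giving the claimed identity. No genuine obstacle arises: the whole argument is a bookkeeping reduction to Case~7, and all four appeals to length-additivity are immediate from $l(t)=l(t^{-1})=0$ and $l(w)=l(w')=1$.
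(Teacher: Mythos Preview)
Your argument is correct and is precisely the reduction the paper intends: Case~8 is stated as a corollary of Case~7 with no separate proof, to be handled ``completely analogously'' to the way Case~6 was derived from Case~5, and your decomposition $[w']_f[w't]_g = [tw]_f[t^{-1}]_1[tw]_g = [tw]_f[w]_g$ followed by Case~7 is exactly that analogous reduction.
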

\section{Tensor decomposition}\label{tensor_dec}
\subsection{Twisted tensor products}\label{twistedTensorprods}
For our decomposition of the Hecke algebra we need a mild generalization of the twisted tensor 
product. This is a construction originating from the theory of quantum groups and treated 
e. g. in \cite{twisted}.

For this subsection, let $A, B$ be associative $R$-algebras over some
commutative unital ring $R$. Denote the multiplication maps by $\mu_A: A\times A\rightarrow A$ and
$\mu_B: B\times B \rightarrow B$. Let $S$ be an associative unital $R$-algebra
together with $R$-algebra homomorphisms $S\rightarrow A, S\rightarrow B$.
% such that the $R$-actions 
%on $A$ and $B$ extend via these maps to $S$-actions. In other words, 
This allows us to regard $A$ and $B$ as $S$-$S$-bimodules 
in a way which is compatible with their structures as $R$-algebras.
\begin{Def}[Twisting map]
A twisting map (over $S$) is an $S$-linear map
\begin{equation*}
\psi: B\otimes_S A \rightarrow A\otimes_S B 
\end{equation*}
which fulfills $\psi(1_B\otimes a) = a\otimes 1_B$ and $\psi(b\otimes 1_A) = 1_A \otimes b$.
\end{Def}
If $\psi$ is a twisting map, 
the multiplication $\mu_{\psi} := (\mu_A \otimes \mu_B)\circ (\operatorname{id}_A \otimes \psi \otimes \operatorname{id}_B)$ makes 
$A\otimes_S B$ into an $R$-algebra and $S$-$S$-bimodule denoted by $A\overset{\psi}{\otimes}_S B$.
We can write the product explicitly as
\begin{equation*}
(a_1 \otimes b_1) \ast (a_2 \otimes b_2) = \sum_i (a_1 \ast_A a_2^i) \otimes (b_1^i \ast_B b_2)
\end{equation*}
if $\psi(b_1\otimes a_2) = \sum_i a_2^i \otimes b_1^i$.

\begin{Rem}
The question whether $A\overset{\psi}{\otimes}_S B$ is associative can easily be translated into 
a property of the map $\psi$, cf. sections 2.3-2.4 
of \cite{twisted}. In our applications we will be constructing $R$-algebra isomorphisms
\begin{equation*}
A\overset{\psi}{\otimes}_S B \overset{\cong}{\longrightarrow} C
\end{equation*}
where $C$ is known to be associative. This is why we do not have to check associativity separately for 
the twisted tensor products we are about to consider. 
\end{Rem}

\begin{Prop}\label{iteratedproduct}
Let $R, S, A, B$ as above and consider twisting maps $\psi_1: S\otimes_R A \rightarrow A\otimes_R S$ (over $R$) and 
$\psi_2: B \otimes_S (A\otimes_R^{\psi_1}S) \rightarrow (A\otimes_R^{\psi_1}S) \otimes_S B$ (over $S$). Then there is an 
isomorphism of $R$-algebras 
\begin{equation}\label{ejogwhi3h4903gh0394hgh40}
 A\otimes_R^{\psi_3} B \cong (A\otimes_R^{\psi_1}S) \otimes_S^{\psi_2} B,
\end{equation} 
where $\psi_3 = \gamma \circ \psi_2 \circ (\operatorname{id}_B \otimes \iota_A)$. Here, we 
made use of the maps
\begin{equation*}
\iota_A : A\rightarrow A\otimes_R^{\psi_1} S \qquad a\mapsto a\otimes 1
\end{equation*}
and
\begin{equation*}
\gamma: (A\otimes_R^{\psi_1}S) \otimes_S B \rightarrow A\otimes_R B \qquad \sum_i a_i\otimes s_i \otimes b_i 
\mapsto \sum_i a_i \otimes s_i.b_i.
\end{equation*} 
\end{Prop}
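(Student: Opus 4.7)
The plan is to exhibit an explicit $R$-module isomorphism between the two sides and check it is multiplicative; the map $\gamma$ itself is the candidate, with inverse $a \otimes b \mapsto (a \otimes 1_S) \otimes_S b$ (well-defined because in a tensor product over $S$ the factor $s$ can always be absorbed into $B$).

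First I would check that $\gamma$ is a well-defined $R$-module isomorphism. The key point is that the right $S$-action on $A \otimes_R^{\psi_1} S$ has the elementary form $(a \otimes s) \cdot s' = a \otimes s s'$: indeed, computing the $\psi_1$-twisted product $(a \otimes s)(1 \otimes s')$ uses only $\psi_1(s \otimes 1_A) = 1_A \otimes s$, which is forced by the unit axiom for $\psi_1$. Consequently every element of $(A \otimes_R^{\psi_1} S) \otimes_S B$ can be normalized to the form $(a \otimes 1_S) \otimes_S b$, and the two compositions with $\gamma$ are identities.

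Next I would verify the unit axioms for $\psi_3$: $\psi_3(1_B \otimes a) = a \otimes 1_B$ and $\psi_3(b \otimes 1_A) = 1_A \otimes b$. Both follow from the fact that $\iota_A(a) = a \otimes 1_S$ is a unit-preserving algebra map, the unit axioms for $\psi_2$, and the compatibility of $\gamma$ with units.

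The heart of the argument is showing that $\gamma$ intertwines the products. I would take elements $(a_1 \otimes 1_S) \otimes_S b_1$ and $(a_2 \otimes 1_S) \otimes_S b_2$ on the right-hand side, apply $\mu_{\psi_2}$, and use the explicit expansion
\[
\psi_2(b_1 \otimes (a_2 \otimes 1_S)) = \sum_i (a_2^i \otimes s_i) \otimes b_1^i
\]
followed by the fact that $(a_1 \otimes 1_S)(a_2^i \otimes s_i) = a_1 a_2^i \otimes s_i$ (again because multiplying by a unit on one side is untwisted). The resulting element is $\sum_i (a_1 a_2^i \otimes s_i) \otimes_S (b_1^i b_2)$, and applying $\gamma$ collapses $s_i$ into $B$ to give $\sum_i a_1 a_2^i \otimes (s_i \cdot b_1^i) \cdot b_2$. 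By the very definition $\psi_3 = \gamma \circ \psi_2 \circ (\operatorname{id}_B \otimes \iota_A)$, the sum $\sum_i a_2^i \otimes (s_i \cdot b_1^i)$ is exactly $\psi_3(b_1 \otimes a_2)$, so this equals the $\psi_3$-twisted product $(a_1 \otimes b_1) \ast (a_2 \otimes b_2)$ in $A \otimes_R^{\psi_3} B$, as required.

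The main obstacle is purely the bookkeeping in the last step: one needs to be careful that all the $S$-balancing is used correctly, in particular that the right $S$-action on $A \otimes_R^{\psi_1} S$ really is untwisted so that the $\psi_1$ data enters only through how $S$ acts on $B$ after applying $\gamma$. Once the multiplicativity is in place, associativity on the right-hand side is automatic (from associativity of the $\psi_2$-twisted product, itself inherited through the same isomorphism from the known associative $(A \otimes_R^{\psi_1} S) \otimes_S^{\psi_2} B$), and is transported to $A \otimes_R^{\psi_3} B$ by $\gamma$, consistent with the remark preceding the proposition.
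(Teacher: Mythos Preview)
Your proposal is correct and follows essentially the same approach as the paper: the paper defines the inverse map $f:\sum_i a_i\otimes b_i \mapsto \sum_i a_i\otimes 1\otimes b_i$, asserts it is an $R$-module isomorphism, and says the multiplicativity check ``boils down to the formal calculation $(f\times f)\circ\mu_{\psi_3}=\mu_{\psi_2}\circ(f\times f)$''. You have simply spelled out that formal calculation in detail (working with $\gamma=f^{-1}$ instead), including the observation that the right $S$-action on $A\otimes_R^{\psi_1}S$ is untwisted, which is the point that makes the bookkeeping go through.
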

\begin{proof}
Clearly, $\psi_3$ defines a twisting map over $R$. Moreover
it is clear that the assignment $f: \sum_i a_i \otimes b_i \mapsto \sum_i a_i \otimes 1 \otimes b_i$ establishes 
the isomorphism (\ref{ejogwhi3h4903gh0394hgh40}) on the level of $R$-modules. Thus we have to check that $f$ commutes
with the algebra multiplication, which boils down to the formal calculation
\begin{equation*}
(f\times f) \circ \mu_{\psi_3} = \mu_{\psi_2}\circ (f\times f).\qedhere
\end{equation*}
\end{proof}

\subsection{Decomposing $\mathscr{H}(G,\mathscr{P},V)$}
\begin{Def}
For $1\leq \tau < \ell$, denote by $R[T]^{\tau}$ the $R$-algebra with underlying $R$-module $R[T]$, but modified 
multiplication defined by
\begin{equation*}
T^a\ast T^b = \begin{cases}
\tau T^{a+b} + T^{a+b+1} &\text{if both $a$ and $b$ are odd;}\\
T^{a+b} & \text{otherwise.}
\end{cases}
\end{equation*}
\end{Def}
\begin{Lem}
$R[T]^{\tau}$ is Euclidean (and, hence, a principal ideal domain).
\end{Lem}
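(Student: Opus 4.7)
The plan is to identify $R[T]^{\tau}$ with a localization of the polynomial ring $R[y]$, and then invoke the fact that such a localization of a Euclidean domain inherits a Euclidean structure.

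First, I would set $y := T$ and $S := T^{2}$ viewed as elements of $R[T]^{\tau}$, and verify from the defining rule that the $R$-module basis $\{T^{i}\}_{i\geq 0}$ coincides with $\{S^{m},\, S^{m}\ast y : m\geq 0\}$, using that $T^{a}\ast T^{b}=T^{a+b}$ whenever $a,b$ are not both odd. The multiplication is commutative by the symmetry of the defining formula, and the unique non-trivial relation among these generators comes from
\begin{equation*}
y\ast y \;=\; T\ast T \;=\; \tau T^{2}+T^{3} \;=\; \tau S+S\ast y,
\end{equation*}
yielding $R[T]^{\tau}\cong R[y,S]/(y^{2}-Sy-\tau S)$ as commutative $R$-algebras.

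Next, I would show that $y+\tau$ is a unit in this algebra. A direct computation using $y^{2}=\tau S+Sy$ and commutativity gives $(y+\tau)(S-y+\tau)=\tau^{2}$, and since $\tau\in\{1,\ldots,\ell-1\}$ is invertible in the field $R$, we obtain $(y+\tau)^{-1}=\tau^{-2}(S-y+\tau)$. The relation $y^{2}=S(y+\tau)$ then allows us to define an $R$-algebra map $R[T]^{\tau}\to R[y,(y+\tau)^{-1}]$ by $y\mapsto y$ and $S\mapsto y^{2}/(y+\tau)$, with inverse $y\mapsto y$ and $(y+\tau)^{-1}\mapsto\tau^{-2}(S-y+\tau)$, establishing
\begin{equation*}
R[T]^{\tau}\;\cong\; R[y,(y+\tau)^{-1}].
\end{equation*}

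Finally, to see that $R[y,(y+\tau)^{-1}]$ is Euclidean, note that any non-zero element admits a unique expression $f=(y+\tau)^{n}g$ with $n\in\mathbb{Z}$ and $g\in R[y]$ coprime to $y+\tau$; I would define $\nu(f):=\deg_{y}g$. Division with remainder then reduces to the Euclidean algorithm in $R[y]$: given $a=(y+\tau)^{n_{a}}a'$ and $b=(y+\tau)^{n_{b}}b'$, divide $a'=q'b'+r'$ in $R[y]$ with $r'=0$ or $\deg r'<\deg b'$; then $a=qb+r$ with $q=(y+\tau)^{n_{a}-n_{b}}q'$ and $r=(y+\tau)^{n_{a}}r'$, and $\nu(r)\leq\deg r'<\nu(b)$ when $r\neq 0$. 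The main technical point in the whole argument is the unit computation in step two, which hinges essentially on the invertibility of $\tau$ in $R$; the remaining passages---the identification as a localization and the descent of the Euclidean function---are formal.
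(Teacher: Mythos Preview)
Your proof is correct and follows essentially the same route as the paper: present $R[T]^{\tau}$ as $R[y,S]/(y^{2}-S(y+\tau))$, identify this with the localization $R[y,(y+\tau)^{-1}]$, and then use that a localization of a Euclidean domain is Euclidean. The only cosmetic differences are that the paper verifies the presentation by computing the kernel of the surjection $R[X,Y]\to R[T]^{\tau}$ and reaches the localization by embedding into $R(X)$ and reading off the image, whereas you exhibit the inverse of $y+\tau$ directly and spell out the Euclidean function on the localization explicitly rather than quoting the general fact.
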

\begin{proof}
First, consider the ring homomorphism
\begin{equation*}
E: R[X, Y]\rightarrow R[T] \qquad X\mapsto T, Y\mapsto T^2.
\end{equation*}
$E$ is clearly surjective, and we want to show that $\operatorname{ker}(E) = (f_0)$ with $f_0 = X^2 - Y(X+\tau)$.
The inclusion ``$\supset$'' is clear, so let $f$ be an element of the kernel. As $E(f_0)=0$, we can successively
replace the symbol $X^2$ in the expression of $f$ by $Y(X+\tau)$, and via this process we will end up with 
a polynomial of the form $f' = \sum_i a_i Y^i + X\sum_i b_iY^i$ such that $E(f) = E(f')$. But we have
\begin{equation*}
E(f') = \sum_i (a_i T^{2i} + b_i T^{2i+1}),
\end{equation*}
so if this expression vanishes we must have $f' = 0$. We conclude that any $f\in \operatorname{ker}(E)$ can be transformed into
the zero polynomial by replacing $X^2$ by $Y(X+\tau)$, i. e. $f\in (f_0)$. Hence
\begin{equation*}
R[X, Y]/(f_0) \cong R[T]^{\tau}.
\end{equation*}
Now, we can embedd
\begin{equation*}
R[X, Y]/(f_0) \hookrightarrow R(X)\qquad X\mapsto X, Y\mapsto \frac{X^2}{X+\tau}.
\end{equation*}
In order to determine the image of this embedding, we write $X^2/(X+\tau) = X -\tau + \tau(X+\tau)^{-1}$, so we get an isomorphism
$R[X, Y]/(f_0) \cong R[X]_{(X+\tau)}$ (localization at $X+\tau$).
As the property of being Euclidean is preserved by localization, the statement follows.
\end{proof}
\begin{Def}
Denote by $\mathscr{H}^{\dagger}_R(G, \mathscr{P}, V)$ the subalgebra of
$\mathscr{H}_R(G, \mathscr{P}, V)$ spanned by all elements of the form $[\eta]^a$
with $a\in \mathbb{N}_0$ and $\eta\in W$ (where, in order to make sense, $a$ must
be even (resp. odd) if $\eta$ is diagonal (resp. non-diagonal)).
\end{Def}
Let $\mathscr{K} = \operatorname{GL}_{n}(\mathcal{O})$ denote the maximal open compact subgroup
of $G$.
\begin{Def}
By $\mathscr{H}^{\dagger}_R(\mathscr{K}, \mathscr{P}, V)$ we denote the intersection 
$\mathscr{H}^{\dagger}_R(G, \mathscr{P}, V)\cap \mathscr{H}_R(\mathscr{K}, \mathscr{P}, V)$.
For $\mathcal{P}$ the standard parabolic subgroup of $\mathcal{G}$
with Levi component $\mathcal{M}$, define the Hecke algebra $\mathscr{H}_R(\mathcal{G}, \mathcal{P}, V)$
consisting of maps $\mathcal{G}\rightarrow V$ fulfilling a bi-invariance property analogous to
(\ref{HeckeChar}) (see  also section \ref{Requirements}).
Then, because of the obvious isomorphism 
\begin{equation*}
\mathscr{H}_R(\mathscr{K}, \mathscr{P}, V) \cong \mathscr{H}_R(\mathcal{G}, \mathcal{P}, V), 
\end{equation*}
%where $\mathcal{P}$ is the standard parabolic subgroup of $\mathcal{G}$
%with Levi component $\mathcal{M}$, 
we can equally well define 
$\mathscr{H}^{\dagger}_R(\mathscr{K}, \mathscr{P}, V)$ as the subalgebra of 
$\mathscr{H}_R(\mathcal{G}, \mathcal{P}, V)$ spanned by all 
elements of the form $[\eta]^a$ with $\eta\in \{1,w\}, a \in \mathbb{N}_0$. This motivates the notation
\begin{equation*} 
\mathscr{H}^{\dagger}_R(\mathcal{G}, \mathcal{P}, V)
= \mathscr{H}^{\dagger}_R(\mathscr{K}, \mathscr{P}, V).
\end{equation*}
\end{Def}
\begin{Def}[Chararacteristic polynomial]
It is clear that the assignment $\sum_I r_i T^i \mapsto
\sum_I r_i[w^i]^i$ defines an $R$-algebra homomorphism
\begin{equation*}
R[T]^{\tau} \rightarrow
\mathscr{H}_R(\mathcal{G}, \mathcal{P}, V) = \operatorname{End}_{\mathcal{G}}\left( \operatorname{ind}_{\mathcal{P}}^{\mathcal{G}}(V)\right)
\end{equation*}
with image 
\begin{equation*}
\mathscr{H}^{\dagger}_R(\mathcal{G}, \mathcal{P}, V) = \,\langle [w^a]^a \,|\,a\in \mathbb{N}_0\rangle_R.
\end{equation*}
We define $\digamma = \digamma_{R, q, k}$ to be the minimal-degree monic polynomial
which generates the kernel of this surjection. It is clear that $\digamma$ depends only on the finite group data $(R, q, k)$.
\end{Def}
\begin{Ex}\label{ExbeforeConj}
If $\ell$ divides $q-1$, we have $\tau = 1$ and we get $\digamma = T^2$ (and, consequently,
an isomorphism between $R[T]^{\tau}$ and the dual numbers $R[\epsilon]$) if $T^{\ast}.V \neq 0$ and 
$\digamma = T$ (and $R[T]^{\tau} \cong R$) if $T^{\ast}.V = 0$.
\end{Ex}
%\begin{Conj}
%I believe that in the non-unipotent (maybe: in the non-strict) case,
%$\digamma$ is of the form $T^a(1-T^{a+1})$ for some $a$. The computer gives this
%for $n = 2, \ell = 13, q = 3$ (here, $\ell | q^3-1$ but $\ell \not| \,q-1$) 
%with $a = 11$. (This is just the kernel of the map into $R[\mathcal{M}]$, 
%so there is much in between...) (ADD: This conjecture is somehow not consistent
%with the next one. The next one is much nicer...)
%\end{Conj} 
%\begin{Conj}\label{PolyConj}
%The polynomial $\digamma$ depends only on the data $R$ and $q^k$.
%In other words, we have $\digamma_{R, q, ab} = \digamma_{R, q^a, b}$ for any $a,b\geq 1$.
%\end{Conj}
It is a natural question to ask if and how these polynomials are related for different choices of 
the data $(R, q, k)$. In particular, one could conjecture that they depend only on $R$ and $q^k$, 
i. e. that $\digamma_{R, q, ab} = \digamma_{R, q^a, b}$ for any choice of $a,b\geq 1$.
\begin{Lem}[Failure of Commutativity]
In $\mathscr{H}_R(G, \mathscr{P}, V)$ we have $[1]_f [\eta]^a = [\eta]^a [1]_f$ and 
\begin{itemize}
\item 
\begin{equation*}
[w]_f [\eta]^a = [w\eta w]^a[w]_f \qquad \text{ if }\eta = t^{2b}w'\ldots w\text{ or } \eta = t^{2b+1}w\ldots w;
\end{equation*}
\item
\begin{equation*}
[w]_f [\eta]^a = (\tau [w\eta w]^a + [\eta w]^{a+1})[w]_f
\end{equation*}
\begin{equation*}
= \tau [w\eta w]^a [w]_f + [\eta]^a [1]_f^1
\qquad\text{ if }\eta = t^{2b}w\ldots w\text{ or } \eta = t^{2b+1}w'\ldots w;
\end{equation*}
\item
\begin{equation*}
[w]_f [\eta]^a = \tau^{-1}([w\eta w]^a - [w\eta ]^{a+1})[w]_f  
\end{equation*}
\begin{equation*}
= \tau^{-1}[w\eta w]^a([w]_f - [1]_{f}^1)
\qquad\text{ if }\eta = t^{2b}w'\ldots w'\text{ or } \eta = t^{2b+1}w\ldots w';
\end{equation*}
\item
\begin{equation*}
[w]_f [\eta]^a = ([w\eta w]^a - [w\eta ]^{a+1} + \tau^{-1}[\eta w]^{a+1} - \tau^{-1}[\eta]^{a+2})[w]_f 
\end{equation*}
\begin{equation*}
= [w\eta w]^a[w]_f + ([\eta]^a - [w\eta w]^a)[1]_{f}^1
\qquad\text{ if }\eta = t^{2b}w\ldots w'\text{ or } \eta = t^{2b+1}w'\ldots w'.
\end{equation*}
\end{itemize}
\end{Lem}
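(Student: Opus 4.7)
The first assertion, $[1]_f [\eta]^a = [\eta]^a [1]_f$, is immediate: the length-adding part of Theorem~\ref{weiobg084ht802hrth3rieh98z25zt934hg98rghwibte32} applied to $[1]_f \ast [\eta]_{(T^\ast)^a}$ (resp.\ to $[\eta]_{(T^\ast)^a} \ast [1]_f$) gives $[\eta]_{f(T^\ast)^a}$ (resp.\ $[\eta]_{(T^\ast)^a f}$), and these agree by Proposition's part~(ii), which states that $T^\ast$ commutes with every $f \in I_1 \cup I_w$.

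For the four cases of $[w]_f \ast [\eta]^a$, my plan is to factor $\eta = \eta_L \cdot \eta_R$ with length addition, where $\eta_L$ is the leftmost Coxeter letter of $\eta$ (so $\eta_L = t^{2b}w_1$ or $t^{2b+1}w_1$, with $w_1 \in \{w,w'\}$ the first reflection in the reduced word of $\eta$, according to the case we are in). By the length-adding corollary, $[\eta]^a$ factors as $[\eta_L]^{j_L} \ast [\eta_R]^{j_R}$ for a suitable splitting $j_L + j_R = a$ with matching parities, so
\[
[w]_f \ast [\eta]^a \;=\; \bigl([w]_f \ast [\eta_L]^{j_L}\bigr) \ast [\eta_R]^{j_R}.
\]
Now $[w]_f \ast [\eta_L]^{j_L}$ is a product of two length-$1$ elements, so Theorem~\ref{weiobg084ht802hrth3rieh98z25zt934hg98rghwibte32} applies directly. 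In Case~1 we have $l(w\eta_L) = l(w) + l(\eta_L)$, and the length-adding formula yields a single term; multiplying by $[\eta_R]^{j_R}$ via the corollary and moving factors of $T^\ast$ across $f$ delivers exactly $[w\eta w]^a \ast [w]_f$. In Cases~2, 3, 4 the product $[w]_f \ast [\eta_L]^{j_L}$ fails length-additivity, and the second formula of the Theorem produces a $\tau$-weighted length-$0$ term together with a $T^\ast$-corrected length-$1$ term.

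To complete each non-trivial case I expand the purported right-hand side in the same way: each of $[w\eta w]^a \ast [w]_f$, $[w\eta]^{a+1} \ast [w]_f$, $[\eta w]^{a+1} \ast [w]_f$ and $[\eta]^{a+2} \ast [w]_f$ is itself a composite product which reduces (via length addition, followed by at most one length-dropping step again handled by the Theorem) to a sum of basis elements of the form $[\xi]_{f(T^\ast)^b}$. Comparing coefficients term by term then verifies the formula. The equivalence of the two displayed presentations in Case~2 is an independent but easy check: $[\eta w]^{a+1} \ast [w]_f = [\eta]_{f(T^\ast)^{a+1}} = [\eta]^a \ast [1]_f^1$, by length-additivity together with the commutation $T^\ast f = f T^\ast$.

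The main obstacle is bookkeeping. In Case~4 the claimed formula is an alternating sum of four terms, and matching it against the direct computation requires carefully tracking how the length-dropping rule inserts factors of $\tau$ and $T^\ast$ at both the ``left'' cancellation (between $[w]_f$ and the first letter of $\eta$) and the ``right'' cancellation (between the last letter of $w\eta w$ and the trailing $[w]_f$). No new structural input is required beyond Theorem~\ref{weiobg084ht802hrth3rieh98z25zt934hg98rghwibte32}, its inductive corollary, and the commutation of $T^\ast$ with $f$: once the decomposition $\eta = \eta_L \cdot \eta_R$ is in place, each case unfolds mechanically.
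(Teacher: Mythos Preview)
Your approach is correct and matches the paper's own proof, which reads in its entirety: ``Straight-forward calculation.'' You have correctly identified that everything reduces to repeated applications of Theorem~\ref{weiobg084ht802hrth3rieh98z25zt934hg98rghwibte32} (and its inductive corollary) together with the commutation $T^\ast f = f T^\ast$.

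One small technical point in your outline deserves care: the factorisation $[\eta]^a = [\eta_L]^{j_L}\ast[\eta_R]^{j_R}$ with the naive choice $\eta_L = t^{2b}w_1$ or $t^{2b+1}w_1$ can fail to exist. For instance if $\eta = t^{2b}w\ldots w$ is diagonal (so $a$ is even) then $\eta_L = t^{2b}w$ is non-diagonal, forcing $j_L$ odd, and likewise $\eta_R$ is non-diagonal, forcing $j_R$ odd; when $a = 0$ there is no such non-negative splitting. The remedy is precisely the device already used in the proof of the corollary to Theorem~\ref{weiobg084ht802hrth3rieh98z25zt934hg98rghwibte32}: one may always rewrite $\eta = \eta_L\eta_R$ with $l(\eta_L) = 1$, lengths additive, and $\eta_L$ \emph{diagonal} (e.g.\ $t^{2b}w\cdot(\text{rest}) = (t^{2b-1}w')\cdot(t\cdot\text{rest})$). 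Then $\eta_R$ inherits the diagonality of $\eta$, the choice $j_L = 0$, $j_R = a$ always works, and your argument proceeds exactly as written.
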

\begin{proof}
Straight-forward calculation.
\end{proof}

\begin{Prop}
A diagonal element $\delta_{x,y} := \operatorname{diag}(\varpi^x, \ldots,\varpi^x, \varpi^y,\ldots, \varpi^y) \in W$ is of
the form $t^{2b}w'\ldots w$ or $t^{2b+1}w\ldots w$ if and only if $x\leq y$
and of the form $ t^{2b}w\ldots w'$ or $t^{2b+1}w'\ldots w'$ iff $x\geq y$.
\end{Prop}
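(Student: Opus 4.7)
The strategy is to put the element $\delta_{x,y}$ into the canonical form $t^{x+y}\pi$, where $\pi$ is a reduced alternating word in $\{w,w'\}$ of length $|x-y|$, and then simply to read off the initial and final letters of $\pi$ and match them against the four described patterns.

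I would begin by recording three basic identities that follow at once from the block descriptions of $w$, $w'$, and $t$: that $\delta_{1,0}=wt=tw'$ and $\delta_{0,1}=tw=w't$ (so both have length one), and that $t^{2}=\varpi_{F}\cdot I_{2k}$ lies in the center of $G$. Centrality of $t^{2}$ gives the commutation rules $tw=w't$ and $tw'=wt$, and these are the only relations I will need. Because $t^{2y}=\delta_{y,y}$ is central, one reduces to the extremal cases via $\delta_{x,y}=t^{2y}\cdot \delta_{x-y,0}$ for $x\ge y$ and $\delta_{x,y}=t^{2x}\cdot \delta_{0, y-x}$ for $x\le y$.

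The heart of the argument is then to compute $\delta_{n,0}=(tw')^{n}$ and $\delta_{0,n}=(tw)^{n}$ for $n\ge 1$ by induction on $n$, using the sweep $w't=tw$ (respectively $wt=tw'$) to migrate every $t$-factor to the left. This gives
\begin{equation*}
\delta_{2m,0}=t^{2m}(ww')^{m},\qquad \delta_{2m+1,0}=t^{2m+1}(w'w)^{m}w',
\end{equation*}
and symmetrically $\delta_{0,2m}=t^{2m}(w'w)^{m}$ and $\delta_{0,2m+1}=t^{2m+1}(ww')^{m}w$. Combined with the reduction above, one obtains for $x>y$ that $\delta_{x,y}=t^{x+y}(ww')^{m}$ when $x-y=2m$ (matching the form $t^{2b}w\ldots w'$) and $\delta_{x,y}=t^{x+y}(w'w)^{m}w'$ when $x-y=2m+1$ (matching $t^{2b+1}w'\ldots w'$); both fall into the second form of the proposition. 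The symmetric computation for $x<y$ produces the two alternatives of the first form. The boundary case $x=y$ reduces to the trivial $\delta_{x,x}=t^{2x}$, which is central and is consistent with either description (and for which both sides of the relevant case in the preceding commutativity lemma agree).

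The main bookkeeping obstacle is tracking how the alternation pattern of $\pi$ shifts with the parity of $|x-y|$ and with the extra trailing $t$-factor that appears in odd degrees. The cleanest way to handle this is to observe that conjugation by $t^{-1}$ acts on words in $\{w,w'\}$ as the involution $w\leftrightarrow w'$, and to organize the induction so that each inductive step increases the length of $\pi$ by exactly one letter and toggles the final symbol. Once this is made explicit, all four equivalences claimed in the proposition follow immediately.
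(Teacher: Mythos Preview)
Your proposal is correct and is essentially the same argument as the paper's own proof: the paper simply records the four explicit expressions $\delta_{x,y}=t^{x+y}(w'w)^{(y-x)/2}$, $t^{x+y}w(w'w)^{(y-x-1)/2}$, $t^{x+y}(ww')^{(x-y)/2}$, $t^{x+y}(w'w)^{(x-y-1)/2}w'$ in the four parity/sign cases and calls it ``an easy calculation''. You have spelled out that easy calculation in detail by reducing via the central element $t^{2}$ to $\delta_{n,0}=(tw')^{n}$ and $\delta_{0,n}=(tw)^{n}$ and then sweeping the $t$'s to the left with $w't=tw$, $wt=tw'$; the resulting formulas agree with the paper's (your $(ww')^{m}w$ equals the paper's $w(w'w)^{m}$).
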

\begin{proof}
It is an easy calculation that 
\begin{itemize}
\item
$\delta_{x,y} = t^{y+x}(w'w)^{(y-x)/2}$ if $y\geq x$ and $y-x \equiv 0 \operatorname{mod} 2$;
\item
$\delta_{x,y} = t^{y+x}w(w'w)^{(y-x-1)/2}$ if $y\geq x$ and  $y-x \equiv 1 \operatorname{mod} 2$;
\item
$\delta_{x,y} = t^{y+x}(ww')^{(x-y)/2}$ if $y\leq x$ and $y-x \equiv 0 \operatorname{mod} 2$;
\item
$\delta_{x,y} = t^{y+x}(w'w)^{(x-y-1)/2}w'$ if $y\leq x$ and  $y-x \equiv 1 \operatorname{mod} 2$.
\end{itemize}\qedhere
\end{proof}
\begin{Thm}
The assignment
\begin{equation*}
\psi: R[T]^{\tau}_{/\digamma}\otimes_R R[\mathbb{Z}^2] \rightarrow
R[\mathbb{Z}^2] \otimes_R  R[T]^{\tau}_{/\digamma}
\end{equation*}
\begin{equation*}
\left(\sum_i r_i T^i\right)\otimes (\alpha, \beta) \mapsto
\begin{cases}
(\alpha, \beta) \otimes (\sum_{2|i} r_i T^i) + (\beta, \alpha) \otimes (\sum_{2\not\;|\,i} r_i T^i) & \text{ if }\alpha \leq \beta;\\
(\alpha, \beta) \otimes (\sum_{2|i} (r_i + r_{i-1}) T^i) + (\beta, \alpha) \otimes (\sum_{2\not\;|\,i} r_i (T^i- T^{i+1}))
& \text{ if } \alpha \geq \beta;
\end{cases}
\end{equation*}
defines a twisting map. (In this formula, the symbol $r_{-1}$ is to be interpreted as $0$.)
Moreover, there is an isomorphism of $R$-algebras
\begin{equation}\label{first_iso}
R[\mathbb{Z}^2]\otimes_R^{\psi}R[T]^{\tau}_{/\digamma}\cong
\mathscr{H}^{\dagger}(G, \mathscr{P}, V)
\end{equation}
induced by the assignment
\begin{equation*}
E:(\alpha, \beta)\otimes\left(\sum_i r_i T^i\right) \mapsto  \delta_{\alpha, \beta}\ast\left(\sum_i r_i [w^i]^i\right).
\end{equation*}
\end{Thm}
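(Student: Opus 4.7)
The plan has three parts: (a) verify that $\psi$ satisfies the unit axioms of a twisting map, (b) show that $E$ descends to a well-defined $R$-module isomorphism, and (c) check that $E$ respects multiplication. Associativity of the twisted product is not a separate concern: once $E$ is established as an $R$-algebra isomorphism onto the manifestly associative Hecke algebra $\mathscr{H}^{\dagger}(G,\mathscr{P},V)$, associativity of the source is automatic, as noted in the remark of Section~\ref{twistedTensorprods}.

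For (a), the unit axioms $\psi(1\otimes(\alpha,\beta))=(\alpha,\beta)\otimes 1$ and $\psi(T^i\otimes(0,0))=(0,0)\otimes T^i$ fall out of the explicit formula by direct inspection (for $\alpha=\beta=0$, both branches of the formula collapse to the same expression). For (b), well-definedness on the quotient by $\digamma$ is the very definition of $\digamma$, namely that it generates the kernel of $T^i\mapsto[w^i]^i$. For bijectivity, the key observation is that $\mathscr{H}^{\dagger}(G,\mathscr{P},V)$ admits an $R$-basis of the form $\{\delta_{\alpha,\beta}\cdot[w^i]^i\}$ with $(\alpha,\beta)\in\mathbb{Z}^2$ and $0\leq i<\deg\digamma$: by repeated use of the length-adding Case~1 of Theorem~\ref{weiobg084ht802hrth3rieh98z25zt934hg98rghwibte32}, every element $[\eta]^a$ with $\eta\in W$ can be rewritten in this form, while $R$-linear independence comes from the double-coset decomposition of $\mathscr{H}^{\dagger}$ combined with the fact that the $R[T]^{\tau}_{/\digamma}$-subalgebra $\mathscr{H}_R^{\dagger}(\mathcal{G},\mathcal{P},V)$ controls the range of $i$.

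The core of the proof is (c), which by $R$-linearity reduces to checking multiplicativity on the generators $(\alpha,\beta)\otimes 1$ and $1\otimes T^i$. Products living entirely inside $R[\mathbb{Z}^2]$ correspond to the length-adding multiplication of diagonal elements, and products entirely inside $R[T]^{\tau}_{/\digamma}$ match by the definition of $\digamma$. The crucial computation is the cross-commutation $(1\otimes T^i)\cdot((\alpha,\beta)\otimes 1)\overset{E}{\longmapsto}[w^i]^i\cdot\delta_{\alpha,\beta}$. For $i$ even, $[w^i]^i=[1]_{(T^{\ast})^i}$ is central by the first assertion of the Failure-of-Commutativity Lemma, and this matches the $2\mid i$ portion of $\psi$ trivially. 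For $i$ odd, $[w^i]^i=[w]_{(T^{\ast})^i}$ is of the form $[w]_f$, and the proposition immediately following the commutation lemma shows that diagonal elements $\delta_{\alpha,\beta}$ fall into exactly bullet~1 (when $\alpha\leq\beta$) or bullet~4 (when $\alpha\geq\beta$) of that lemma. These two formulas translate term-for-term into the two branches of $\psi$: the substitution $[1]_f^1=[1]_{(T^{\ast})^{i+1}}\leftrightarrow T^{i+1}$ explains the $T^i-T^{i+1}$ combinations appearing in the $\alpha\geq\beta$ branch, and the conjugation $w\delta_{\alpha,\beta}w=\delta_{\beta,\alpha}$ accounts for the swap $(\alpha,\beta)\leftrightarrow(\beta,\alpha)$.

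The main obstacle is the case-by-case book-keeping in this final translation: one must track the parity of $i$ (which dictates whether $[w^i]^i$ is in $I_1$ or $I_w$), handle the convention $r_{-1}=0$ at the edge, and repeatedly invoke the length-adding identity $\delta\cdot[w^i]^i=[\delta w^i]^i$ to identify the image of $E$ inside the basis of $\mathscr{H}^{\dagger}$. Once the cross-commutation is verified, multiplicativity on general elements follows by the standard $R$-bilinear extension intrinsic to the twisted-tensor-product construction.
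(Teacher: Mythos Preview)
Your approach is essentially the same as the paper's: both rely on the failure-of-commutativity lemma (together with the proposition identifying which bullet a diagonal $\delta_{\alpha,\beta}$ falls under) to verify that $E$ is multiplicative, and both defer associativity of the twisted product to the target algebra. The one substantive difference is in how bijectivity of $E$ is established. The paper does not argue via a basis; instead it writes down an explicit set-theoretic inverse
\[
G: [\eta]^a \mapsto \begin{cases}
(x,y) \otimes T^a & \text{if } \eta = \delta_{x,y};\\
(x,y) \otimes T^a & \text{if } \eta = \delta_{x,y}w,\ x\geq y;\\
(x,y) \otimes \tau^{-1}(T^a - T^{a+1}) & \text{if } \eta = \delta_{x,y}w,\ x < y;
\end{cases}
\]
and checks $E\circ G = G\circ E = \mathrm{id}$ directly.

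Your basis argument has a small gap at exactly the spot this inverse handles: the claim that ``repeated use of the length-adding Case~1 of Theorem~\ref{weiobg084ht802hrth3rieh98z25zt934hg98rghwibte32}'' suffices to rewrite every $[\eta]^a$ in the form $\delta_{\alpha,\beta}\cdot[w^i]^i$ is not quite right. When $\eta = \delta_{x,y}w$ with $x<y$, the product $[\delta_{x,y}]\ast[w]^i$ is \emph{not} length-additive (since $\delta_{x,y}$ already ends in $w$), so you cannot reach $[\eta]^a$ from your proposed basis using only Case~1. This is precisely why the third branch of $G$ involves the non-trivial combination $\tau^{-1}(T^a-T^{a+1})$, which comes from the non-length-adding relation. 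Once you allow that relation the spanning argument goes through, but at that point the explicit inverse is the cleaner and more transparent route.
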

\begin{proof}
It is clear that $\psi$ defines a twisting map. It follows from the failure-of-commutativity lemma
that $E$ gives rise to an $R$-algebra map. Thus, the isomorphism is established as soon as we find a set-theoretic
inverse mapping $G$ of $E$. For this, consider
\begin{equation*}
G: [\eta]^a \mapsto \begin{cases}
(x,y) \otimes T^a & \text{ if } \eta = \delta_{x,y};\\
(x,y) \otimes T^a & \text{ if } \eta = \delta_{x,y}w \text{ and } x\geq y;\\
(x,y) \otimes \tau^{-1}(T^a - T^{a+1}) & \text{ if } \eta = \delta_{x,y}w \text{ and } x < y.
\end{cases}
\end{equation*}
It is now straightforward to check that $E\circ G = 1$ and $G\circ E = 1$.
\end{proof}
\begin{Cor}
The isomorphism (\ref{first_iso}) is an isomophism of $R[T]^{\tau}_{/ \digamma}$-bimodules via
\begin{equation*}
\left(\sum_i r_i T^i \right) \ast\left( (\alpha, \beta) \otimes h\right) \ast g 
\end{equation*}
\begin{equation*}
= \begin{cases}
(\alpha, \beta) \otimes (\sum_{2|i} r_i T^i)hg + (\beta, \alpha) \otimes (\sum_{2\not\;|\,i} r_i T^i)hg & \text{ if }\alpha \leq \beta;\\
(\alpha, \beta) \otimes (\sum_{2|i} (r_i + r_{i-1}) T^i)hg + (\beta, \alpha) \otimes (\sum_{2\not\;|\,i} r_i (T^i- T^{i+1}))hg
& \text{ if } \alpha \geq \beta;
\end{cases}
\end{equation*}
where $\sum_i r_i T^i, h, g \in R[T]^{\tau}_{/ \digamma}$ and $(\alpha, \beta) \in R[\mathbb{Z}^2]$.
\end{Cor}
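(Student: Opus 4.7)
The plan is to observe that the bimodule structure claimed on the left-hand side of (\ref{first_iso}) is nothing more than the restriction of the algebra structure of $R[\mathbb{Z}^2]\otimes_R^{\psi}R[T]^{\tau}_{/\digamma}$ along the canonical embedding $\iota: R[T]^{\tau}_{/\digamma}\hookrightarrow R[\mathbb{Z}^2]\otimes_R^{\psi}R[T]^{\tau}_{/\digamma}$, $r\mapsto (0,0)\otimes r$. First, I would verify that $\iota$ is indeed a unital $R$-algebra homomorphism; this is immediate from the twisting-map axioms $\psi(b\otimes 1_A)=1_A\otimes b$ and $\psi(1_B\otimes a)=a\otimes 1_B$ applied to $a=(0,0)$. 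Under the isomorphism $E$ of (\ref{first_iso}), $\iota$ corresponds to the inclusion $\mathscr{H}^{\dagger}_R(\mathcal{G},\mathcal{P},V)\hookrightarrow \mathscr{H}^{\dagger}_R(G,\mathscr{P},V)$ (since $\delta_{0,0}=1$), so both sides carry compatible $R[T]^{\tau}_{/\digamma}$-bimodule structures and $E$ respects them.

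It therefore only remains to compute the triple product $\bigl((0,0)\otimes r\bigr)\ast\bigl((\alpha,\beta)\otimes h\bigr)\ast\bigl((0,0)\otimes g\bigr)$ explicitly, using the formula $\mu_{\psi}=(\mu_A\otimes \mu_B)\circ(\mathrm{id}\otimes \psi\otimes \mathrm{id})$. For the rightmost factor, the twisting-map axiom gives $\psi(h\otimes(0,0))=(0,0)\otimes h$, so
\begin{equation*}
\bigl((\alpha,\beta)\otimes h\bigr)\ast\bigl((0,0)\otimes g\bigr)=(\alpha,\beta)\otimes(h\ast g),
\end{equation*}
which takes care of right multiplication. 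For the leftmost factor, I would substitute $r=\sum_i r_i T^i$ directly into the defining formula for $\psi$: in the case $\alpha\leq\beta$ one reads off
\begin{equation*}
\psi\bigl(r\otimes(\alpha,\beta)\bigr)=(\alpha,\beta)\otimes\Bigl(\sum_{2|i}r_i T^i\Bigr)+(\beta,\alpha)\otimes\Bigl(\sum_{2\nmid i}r_i T^i\Bigr),
\end{equation*}
and similarly (with the shifted coefficients $r_i+r_{i-1}$ and the factor $T^i-T^{i+1}$) in the case $\alpha\geq\beta$. Feeding this into $\mu_{\psi}$ and multiplying through by $h\ast g$ on the right produces exactly the two displayed formulae in the statement of the corollary.

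The only subtlety — and the one point where I would pause — is the overlap case $\alpha=\beta$, where both branches of the formula for $\psi$ apply. Here one must check that both expressions yield the same element; this reduces to the identity $r_i+r_{i-1}+r_i(T^i-T^{i+1})/T^i=\ldots$, or more cleanly, to the observation that on the overlap the odd-degree contribution $\sum_{2\nmid i}r_i T^i$ coming from the first branch equals $\sum_{2\nmid i}r_i(T^i-T^{i+1})$ plus the shift $\sum_{2|i}r_{i-1}T^i$ coming from the second, once one remembers that $(\alpha,\beta)=(\beta,\alpha)$ in $R[\mathbb{Z}^2]$. So well-definedness of $\psi$ (which is implicit in the previous theorem) immediately implies that the two branches of the bimodule formula agree on $\alpha=\beta$. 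Since all other verifications are straightforward expansions of $\mu_{\psi}$, the proof is complete.
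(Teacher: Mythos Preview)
Your proposal is correct and is exactly the argument the paper has in mind: the corollary is stated without proof precisely because the bimodule structure is nothing but restriction of the twisted-tensor-product multiplication along the embedding $r\mapsto (0,0)\otimes r$, and the displayed formula is then a direct transcription of the defining formula for $\psi$. Your verification of the overlap at $\alpha=\beta$ (which amounts to the cancellation $\sum_{2|i}r_{i-1}T^i=\sum_{2\nmid j}r_jT^{j+1}$) is the only thing one might want to spell out, and you do.
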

\begin{Cor}
\begin{equation*}
\mathscr{H}^{\dagger}(\mathcal{G}, \mathcal{P}, V) \cong R[T]^{\tau}_{/\digamma}.
\end{equation*}
\end{Cor}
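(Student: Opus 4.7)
The plan is to obtain this corollary as an immediate application of the first isomorphism theorem to the characteristic-polynomial surjection of the preceding definition. That definition exhibits a surjective $R$-algebra map
\[
\Phi: R[T]^\tau \twoheadrightarrow \mathscr{H}^\dagger_R(\mathcal{G}, \mathcal{P}, V), \qquad T^i \mapsto [w^i]^i,
\]
with $\digamma$ declared to be the monic generator of minimal degree of $\ker(\Phi)$. That $\ker(\Phi)$ is in fact principal, so that such a generator exists and pins down the ideal, is exactly what the preceding Lemma provides, namely that $R[T]^\tau$ is Euclidean and hence a principal ideal domain. The first isomorphism theorem then yields $R[T]^\tau_{/\digamma} = R[T]^\tau/(\digamma) \xrightarrow{\sim} \mathscr{H}^\dagger_R(\mathcal{G}, \mathcal{P}, V)$, as claimed.

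A more conceptual route is to derive the corollary by restricting the main tensor decomposition theorem of this subsection. Under the isomorphism $E: R[\mathbb{Z}^2]\otimes_R^{\psi} R[T]^\tau_{/\digamma} \xrightarrow{\sim} \mathscr{H}^\dagger_R(G, \mathscr{P}, V)$, the subalgebra supported on $\mathscr{K}$ corresponds exactly to $\{(0,0)\}\otimes R[T]^\tau_{/\digamma}$, because a diagonal representative $\delta_{\alpha,\beta}$ lies in $\mathscr{K}$ iff $\alpha = \beta = 0$ (while $w \in \mathscr{K}$). Inspection of the explicit formula for $\psi$ at $(\alpha,\beta) = (0,0)$ shows that it restricts to the identity on this subspace, so the multiplication coincides with that of $R[T]^\tau_{/\digamma}$. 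Combined with the identification $\mathscr{H}^\dagger_R(\mathcal{G}, \mathcal{P}, V) = \mathscr{H}^\dagger_R(\mathscr{K}, \mathscr{P}, V)$ recorded in the preceding definition, the corollary follows.

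The main obstacle is essentially absent: the corollary is a formal consequence of the ingredients already in place. The genuine work was done upstream, in verifying via Theorem~\ref{weiobg084ht802hrth3rieh98z25zt934hg98rghwibte32} that the assignment $T^i \mapsto [w^i]^i$ actually respects both the length-adding products and the collapsing products (those arising when both exponents are odd) of $R[T]^\tau$, so that $\Phi$ is a ring homomorphism in the first place, and in proving the twisted tensor decomposition.
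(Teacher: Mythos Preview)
Your proposal is correct, and both routes you outline match what the paper leaves implicit: the corollary is stated without proof, and either your first-isomorphism-theorem argument (which is precisely what the Definition of $\digamma$ together with the Euclidean Lemma sets up) or the restriction of the tensor decomposition to the $(0,0)$-component recovers it immediately. There is nothing to add.
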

%{\color{red} From now on, we consider $\mathscr{H}^{\dagger}_R(G, P,V)$ as a 
%$\mathscr{H}^{\dagger}_R(\mathcal{G}, \mathcal{B},V)$-bimodule, where the action is
%from the left in both cases, i. e. 
%\begin{equation}
%[w^i]^i \bullet [\eta]^l \bullet [w^j]^j :=  [w^i]^ij\ast[w^j]^j[\eta]^l .
%\end{equation}
%This makes sense as $\mathscr{H}^{\dagger}_R(\mathcal{G}, \mathcal{B},V)$ is commutative.}\\
The main decomposition is now
\begin{Thm}
The assignment
\begin{equation*}
[1]_f\otimes [\eta]^a \mapsto [\eta]^a \otimes [1]_f;
\end{equation*}
\begin{equation*}
[w]_f \otimes [\eta]^a \mapsto \begin{cases}
[w\eta w]^a \otimes [w]_f & \text{ if }\eta = t^{2b}w'\ldots w \text{ or } t^{2b+1}w\ldots w;\\
\tau[w\eta w]^a \otimes [w]_f + [\eta]^a\otimes [1]_{f}^1& \text{ if }\eta = t^{2b}w\ldots w \text{ or } t^{2b+1}w'\ldots w;\\
\tau^{-1}[w\eta w]^a \otimes ([w]_f -[1]_{f}^1)& \text{ if }\eta = t^{2b}w'\ldots w' \text{ or } t^{2b+1}w\ldots w';\\
[w\eta w]^a \otimes ([w]_f -[1]_{f}^1) + [\eta]^a\otimes [1]_{f}^1& \text{ if }\eta = t^{2b}w\ldots w' \text{ or } t^{2b+1}w'\ldots w';\\
\end{cases}
\end{equation*}
defines a twisting map
\begin{equation*}
\Psi:
\mathscr{H}_R(\mathcal{G}, \mathcal{P},V) \otimes_{\mathscr{H}^{\dagger}_R(\mathcal{G}, \mathcal{P},V)} \mathscr{H}^{\dagger}_R(G, \mathscr{P},V)
\rightarrow
\mathscr{H}^{\dagger}_R(G, \mathscr{P}, V)\otimes_{\mathscr{H}^{\dagger}_R(\mathcal{G}, \mathcal{P},V)} \mathscr{H}_R(\mathcal{G}, \mathcal{P},V).
\end{equation*}
This gives rise to an isomorphism of $\mathscr{H}^{\dagger}_R(\mathcal{G}, \mathcal{P},V)$- (and, hence, $R$-)algebras
\begin{equation*}
\mathscr{H}^{\dagger}_R(G, \mathscr{P},V)\otimes^{\Psi}_{\mathscr{H}^{\dagger}_R(\mathcal{G}, \mathcal{P},V)} \mathscr{H}_R(\mathcal{G}, \mathcal{P},V)
\cong \mathscr{H}_R(G, \mathscr{P},V).
\end{equation*}
\end{Thm}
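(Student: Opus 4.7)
The strategy is to proceed in three stages. First, verify that $\Psi$ really is a twisting map over $S:=\mathscr{H}^{\dagger}_R(\mathcal{G},\mathcal{P},V)$. Second, exhibit the obvious candidate for the isomorphism, namely convolution, and show it respects the twisted multiplication $\mu_{\Psi}$. Third, check bijectivity by exhibiting an explicit set-theoretic inverse.

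For stage one, the unit conditions $\Psi([1]_1\otimes[\eta]^a)=[\eta]^a\otimes[1]_1$ and $\Psi([v]_f\otimes[1]^0)=[1]^0\otimes[v]_f$ can be read off immediately from the formula in the four cases. What needs more work is $S$-bilinearity, i.e.\ that $\Psi$ factors through the tensor product balanced over $S$: since $S$ is generated by the elements $[w^b]^b$, this reduces to commuting $\Psi$ with multiplication by such a generator on either side, which is a case-by-case verification using Theorem \ref{weiobg084ht802hrth3rieh98z25zt934hg98rghwibte32} to rewrite products like $[w]_f\ast[w^b]^b$.

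For stage two, define the $R$-linear map
\begin{equation*}
\Phi:\mathscr{H}^{\dagger}_R(G,\mathscr{P},V)\otimes_S\mathscr{H}_R(\mathcal{G},\mathcal{P},V)\to\mathscr{H}_R(G,\mathscr{P},V),\quad g\otimes h\mapsto g\ast h.
\end{equation*}
This is well-defined on the $S$-balanced tensor product because $S$ sits as a common subalgebra in both factors via the natural inclusions. To show $\Phi$ is an $R$-algebra map with respect to $\mu_{\Psi}$, unravel the definition of the twisted multiplication: the only nontrivial thing to check is the identity
\begin{equation*}
h\ast g \;=\; \Phi(\Psi(h\otimes g))\qquad\text{for all } h\in\mathscr{H}_R(\mathcal{G},\mathcal{P},V),\ g\in\mathscr{H}^{\dagger}_R(G,\mathscr{P},V),
\end{equation*}
inside $\mathscr{H}_R(G,\mathscr{P},V)$. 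Since $[1]_f$ is central in $\mathscr{H}^{\dagger}_R(G,\mathscr{P},V)$ (the first bullet of the failure-of-commutativity lemma), the only case to verify is $h=[w]_f$. The four cases in the definition of $\Psi$ then match, line by line, the four bullets of that lemma, once one unwinds how the RHS re-expresses $[w]_f[\eta]^a$ as a sum of terms of the form $[\eta']^{a'}\ast[w']_{f'}$.

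For stage three, I would construct an inverse $\Phi^{-1}$ directly. Every element of $\mathscr{H}_R(G,\mathscr{P},V)$ is a linear combination of generators $[\eta]_\phi$ with $\eta\in W$ and $\phi$ in a basis of $I_1$ or $I_w$; writing $\eta=\eta'v$ with $\eta'$ diagonal and $v\in\{1,w\}$, the assignment $[\eta]_\phi\mapsto [\eta']^a\otimes[v]_\phi$ (with the admissible parity $a$) is a set-theoretic section of $\Phi$. Using the explicit $S$-bimodule structure from the preceding corollary on $\mathscr{H}^{\dagger}_R(G,\mathscr{P},V)$, one checks that this section is $R$-linear modulo the $S$-balancing relations and that $\Phi\circ \Phi^{-1}$ and $\Phi^{-1}\circ\Phi$ are identities on generators. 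The main obstacle is stage two: the bookkeeping to line up the eight sub-cases in the failure-of-commutativity lemma (parity of $b$ in $t^{2b}$ vs.\ $t^{2b+1}$, and the first reflection $w$ vs.\ $w'$) with the four cases in the definition of $\Psi$, together with rewriting expressions like $\tau[w\eta w]^a[w]_f+[\eta]^a[1]^1_f$ in the form $g^i\otimes h^i$ demanded by the twisted product.
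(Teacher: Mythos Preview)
Your three-stage plan is exactly the paper's approach: verify $\Psi$ is a well-defined $S$-linear twisting map (via the generator $[w]^1$ of $S$), define the isomorphism as convolution $E\colon [\eta]^a\otimes h\mapsto [\eta]^a\ast h$ and check multiplicativity against the failure-of-commutativity lemma, then exhibit a set-theoretic inverse. Stages one and two are fine.

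There is, however, a genuine gap in your Stage~3. Your proposed section $[\eta]_\phi\mapsto [\eta']^0\otimes [v]_\phi$ (with $\eta=\eta'v$, $\eta'$ diagonal, $v\in\{1,w\}$) only inverts $\Phi$ when the product $[\eta']^0\ast[v]_\phi$ is length-additive. If $\eta$ is non-diagonal and its reduced expression does \emph{not} end in $w$ (e.g.\ $\eta=t^{2b+1}$ or $\eta=t^{2b}w\ldots w'$), then $\eta'=\eta w$ is diagonal with $l(\eta w)=l(\eta)+1$, so $l(\eta')+l(w)=l(\eta)+2\neq l(\eta)$ and Theorem~\ref{weiobg084ht802hrth3rieh98z25zt934hg98rghwibte32} does not give $[\eta']^0[w]_\phi=[\eta]_\phi$. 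In this case $\Phi$ applied to your candidate lands on the wrong element. The paper's inverse repairs this by sending such $[\eta]_f$ to $\tau^{-1}[\eta w]^0\otimes\bigl([w]_f-[1]_f^1\bigr)$; one checks via the third bullet of the failure-of-commutativity lemma that $\Phi$ indeed takes this to $[\eta]_f$. So your inverse needs a third case, and once you insert it the argument goes through.
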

\begin{proof} 
Because of the obvious $R$-linearity of the given assignment, the verification that $\Psi$ is well-defined and linear 
can be done case-by-case (depending
on the structure of $\eta$). So, let $\eta$ be as in the first case above, i. e. 
$\eta = t^{2b}w'\ldots w \text{ or } t^{2b+1}w\ldots w$. It suffices to compare the following
expressions:
\newcommand{\msout}[1]{\text{\sout{\ensuremath{#1}}}}
\begin{equation*}
{\Psi([1]_f[w]^b \otimes [\eta]^a) \overset{?}{=} \Psi([1]_f \otimes [w]^b[\eta]^a) \quad \text{and}\quad
\Psi([w]_f[w]^b \otimes [\eta]^a) \overset{?}{=} \Psi([w]_f \otimes [w]^b[\eta]^a)}.
\end{equation*}
%\begin{equation}
%\Psi([1]_f[w]^b \otimes [\eta]^a) \overset{?}{=} \Psi([1]_f \otimes [\eta]^a[w]^b) \quad \text{and}\quad
%\Psi([w]_f[w]^b \otimes [\eta]^a) \overset{?}{=} \Psi([w]_f \otimes [\eta]^a[w]^b).
%\end{equation}
After evaluation, the first comparison is between $[w\eta w]^a \otimes [w]_{f}^b$ and
${[w\eta]^{a+b}\otimes [1]_f}$
%$([\eta w]^{a+b} + [\eta]^{a+b+1})\otimes [1]_f$ 
, which coincides as we have $[w\eta]^{a+b} = [w\eta w]^a [w]^b$.
The second comparison, after evaluation, is between $\tau[\eta]^a \otimes [1]_{f}^b+
[w\eta w]^a\otimes [w]_{ f}^{b+1}$ and $\tau[\eta w]^{a+b}\otimes [w]_f + [w\eta]^{a+b}\otimes
[1]_{f}^1$.
Equality follows from the identities 
\begin{equation}\label{simplifications}
\tau[1]_{f}^1 = ([w]^1 - [1]^2)[w]_f\text{ and }[\eta]^c([w]^1 - [1]^2) = \tau[\eta w]^{c+1}.
\end{equation}
For the 
$\mathscr{H}^{\dagger}_R(\mathcal{G}, \mathcal{B},V)$-linearity, it is clearly sufficient to 
prove that $[w]^1 \circ \Psi = \Psi \circ [w]^1$.
We have
\begin{equation*}
[w]^1 \circ \Psi([1]_f \otimes [\eta]^a) = [w]^1\left( [\eta]^a \otimes [1]_f\right) =
[w\eta]^{a+1} \otimes [1]_f. 
\end{equation*}
On the other hand, 
\begin{equation*}
\Psi \circ [w]^1([1]_f \otimes [\eta]^a) = \Psi\left([w]_{f}^1\otimes [\eta]^a\right) = 
\Psi\left([1]_{f}\otimes [w\eta]^{a+1}\right) =
[w\eta]^{a+1} \otimes [1]_f.
\end{equation*}
Moreover, we have
\begin{equation*}
[w]^1 \circ \Psi([w]_f \otimes [\eta]^a) = [w]^1\left([w\eta w]^a \otimes [w]_f\right) = 
(\tau[\eta w]^{a+1} + [w\eta w]^{a+2})\otimes [w]_f. 
\end{equation*}
On the other hand,
\begin{equation*}
\Psi \circ [w]^1([w]_f \otimes [\eta]^a) = \Psi\left( (\tau[1]_{f}^1 + [w]_{f}^2) \otimes [\eta]^a\right)
= \tau[\eta]^a \otimes [1]_{f}^1 + [w\eta w]^a \otimes [w]_{f}^2.
\end{equation*}
Here, we used the identity in (\ref{simplifications}) again. The other three cases are checked similarly.\\
That $\Psi(1\otimes X) = X\otimes 1$ and $\Psi(Y\otimes 1) = 1\otimes Y$ is obvious. (Remark 
that $\eta = 1$ is included in both the first and the last case in the statement of the theorem.)\\
Now the desired isomorphism is given
by
\begin{equation*}
E: [\eta]^a \otimes \left( [1]_f + [w]_g\right) \mapsto [\eta]^a \left( [1]_f + [w]_g\right).
\end{equation*}
This defines an $R$-algebra homomorphism by the failure-of-commutativity lemma. In order to see that it
is bijective it is sufficient to give a set-theoretic inverse map. Similar to the last proof, we 
consider
\begin{equation*}
G: [\eta]_f \mapsto \begin{cases}
[\eta]^0 \otimes [1]_f& \text{ if $\eta$ diagonal};\\
[\eta w]^0 \otimes [w]_f & \text{ if $\eta$ non-diagonal, ends on }w;\\
\tau^{-1}[\eta w]^0 \otimes ([w]_f -[1]_{f}^1) & \text{ if $\eta$ non-diagonal, does not end on }w.
\end{cases}
\end{equation*}
(The third case includes $\eta = t^{2b+1}$.) It is clear that $E\circ G = 1$. To check that
$G\circ E = 1$ on symbols of the form $[\eta]^a\otimes [1]_f$ involves checking three
cases (depending on the form of $\eta$, as distinguished in the definition of $G$), which
is straightforward. To check that
$G\circ E = 1$ on symbols of the form $[\eta]^a\otimes [w]_f$ boils down to checking
four cases, which, again, poses no difficulty. 
\end{proof}

A variation of this decomposition is:
\begin{Cor}\label{MainResult}
The assignment
\begin{equation*}
[1]_f\otimes (\alpha, \beta) \mapsto (\alpha, \beta) \otimes [1]_f;
\end{equation*}
\begin{equation*}
[w]_f \otimes (\alpha, \beta) \mapsto \begin{cases}
(\beta, \alpha) \otimes [w]_f & \text{ if }\alpha \leq \beta;\\
(\alpha, \beta) \otimes [1]_{f}^1 + (\beta, \alpha)\otimes ([w]_f- [1]_{f}^1)& \text{ if }\alpha \geq \beta;
\end{cases}
\end{equation*}
defines a twisting map
\begin{equation*}
\zeta:
\mathscr{H}_R(\mathcal{G}, \mathcal{P},V) \otimes_R R[\mathbb{Z}^2] \rightarrow
R[\mathbb{Z}^2]\otimes_R \mathscr{H}_R(\mathcal{G}, \mathcal{P},V).
\end{equation*}
This gives rise to an isomorphism of $R$-algebras
\begin{equation*}R[\mathbb{Z}^2]\otimes_R^{\zeta} \mathscr{H}_R(\mathcal{G}, \mathcal{P},V)
\cong \mathscr{H}_R(G, P,V).
\end{equation*}
\end{Cor}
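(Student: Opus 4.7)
The plan is to deduce the corollary by combining the main decomposition theorem (giving $\mathscr{H}_R(G, \mathscr{P}, V) \cong \mathscr{H}^{\dagger}_R(G, \mathscr{P}, V) \otimes^{\Psi}_{\mathscr{H}^{\dagger}_R(\mathcal{G}, \mathcal{P}, V)} \mathscr{H}_R(\mathcal{G}, \mathcal{P}, V)$) with the earlier isomorphism $\mathscr{H}^{\dagger}_R(G, \mathscr{P}, V) \cong R[\mathbb{Z}^2] \otimes_R^{\psi} \mathscr{H}^{\dagger}_R(\mathcal{G}, \mathcal{P}, V)$ (using also the identification $\mathscr{H}^{\dagger}_R(\mathcal{G}, \mathcal{P}, V) \cong R[T]^{\tau}_{/\digamma}$), and then flattening the nested twisted tensor product via Proposition \ref{iteratedproduct}. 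In the notation of that proposition one takes $A = R[\mathbb{Z}^2]$, $S = \mathscr{H}^{\dagger}_R(\mathcal{G}, \mathcal{P}, V)$, $B = \mathscr{H}_R(\mathcal{G}, \mathcal{P}, V)$, $\psi_1 = \psi$, $\psi_2 = \Psi$. This produces an induced twisting map $\psi_3 = \gamma \circ \Psi \circ (\operatorname{id}_B \otimes \iota_A)$ and an $R$-algebra isomorphism $R[\mathbb{Z}^2] \otimes_R^{\psi_3} \mathscr{H}_R(\mathcal{G}, \mathcal{P}, V) \cong \mathscr{H}_R(G, \mathscr{P}, V)$, which is exactly the isomorphism asserted once one verifies that $\psi_3 = \zeta$.

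To establish $\psi_3 = \zeta$, I would trace a simple tensor $b \otimes (\alpha, \beta)$ (with $b = [1]_f$ or $b = [w]_f$) through the three constituent maps. The embedding $\iota_A$ sends $(\alpha, \beta)$ to $(\alpha, \beta) \otimes 1$, and under the isomorphism (\ref{first_iso}) this corresponds to $[\delta_{\alpha,\beta}]^0 \in \mathscr{H}^{\dagger}_R(G, \mathscr{P}, V)$. Next, the proposition classifying diagonal elements tells us that $\delta_{\alpha, \beta}$ lies in the first family ($t^{2b}w'\ldots w$ or $t^{2b+1}w\ldots w$) exactly when $\alpha \leq \beta$, and in the fourth family ($t^{2b}w\ldots w'$ or $t^{2b+1}w'\ldots w'$) exactly when $\alpha \geq \beta$; the second and third branches of $\Psi$ do not occur at all. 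Plugging $[\delta_{\alpha,\beta}]^0$ into the appropriate branch of $\Psi$, rewriting the resulting diagonal elements back as $(\beta,\alpha) \otimes 1$ and $(\alpha,\beta) \otimes 1$ in $A \otimes_R^{\psi_1} S$, and finally applying $\gamma$ (which collapses $a \otimes 1 \otimes c$ to $a \otimes c$), one recovers the two branches of $\zeta$ precisely.

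The only subtlety is the overlap case $\alpha = \beta$, where $\delta_{\alpha,\alpha} = t^{2\alpha}$ belongs to both families, so one must check that the two branches of $\zeta$ give the same value. This is immediate from
\begin{equation*}
(\alpha,\alpha) \otimes [1]_f^1 + (\alpha,\alpha) \otimes ([w]_f - [1]_f^1) = (\alpha,\alpha) \otimes [w]_f.
\end{equation*}
Beyond this sanity check, the verification is pure bookkeeping: the substantive content has already been packaged in the main decomposition theorem, in Proposition \ref{iteratedproduct}, and in the classification of $\delta_{\alpha,\beta}$. The main (modest) obstacle is therefore the careful case-by-case computation of $\gamma \circ \Psi \circ (\operatorname{id}_B \otimes \iota_A)$, which becomes routine once the explicit formula for $\Psi$ is in hand.
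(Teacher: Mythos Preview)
Your proposal is correct and follows exactly the paper's approach: combine the main decomposition theorem with the isomorphism (\ref{first_iso}) to write $\mathscr{H}_R(G,\mathscr{P},V)$ as an iterated twisted tensor product, then invoke Proposition~\ref{iteratedproduct} with $A=R[\mathbb{Z}^2]$, $S=\mathscr{H}^{\dagger}_R(\mathcal{G},\mathcal{P},V)$, $B=\mathscr{H}_R(\mathcal{G},\mathcal{P},V)$. The paper's own proof is in fact terser than yours --- it stops after citing Proposition~\ref{iteratedproduct} --- while you spell out the identification $\psi_3=\zeta$ via the classification of $\delta_{\alpha,\beta}$, which is a welcome addition.
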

\begin{proof}
%It is clear that $\zeta$ defines a twisting map. 
By the results above, we know
that 
\begin{equation*}
\mathscr{H}(G, P, V) = \left( R[\mathbb{Z}^2]\otimes_R^{\psi}R[T]^{\tau}_{/\digamma} \right)
\otimes_{R[T]^{\tau}_{/\digamma}}^{\Psi} 
\mathscr{H}_R(\mathcal{G}, \mathcal{P},V).
\end{equation*}
%Thus, it is sufficient to show that the map
%\begin{equation}
%F: R[\mathbb{Z}^2]\otimes_R^{\zeta} \mathscr{H}_R(\mathcal{G}, \mathcal{P},V)
%\rightarrow \mathscr{H}_R(G, P,V)\qquad (\alpha, \beta)\otimes [\epsilon]_f \mapsto
%\delta_{\alpha, \beta} \ast [\epsilon]_f
%\end{equation}
%is an $R$-algebra map. This is clear as soon as we check that $F$ commutes
%with multiplication, which amounts to a straight-forward application of the
%failure-of-commutativity lemma.
Therefore, the claim follows from Proposition \ref{iteratedproduct}.
\end{proof}
%\begin{Ex}[Scalar Iwahori-Hecke algebra]
%We recover the classical (scalar) case as follows: Setting $V=1$, we
%have $\mathscr{H}(\mathcal{G}, \mathcal{B}, V) =
%\mathscr{H}(W, q)$, the combinatorial Hecke algebra. Using the modularity assumption $\ell | (q-1)$, Schur's lemma 
%and $T^{\ast}|V = 0$, this algebra is actually isomorphic
%to $R[W] = R[S_2]$. Then the map $\zeta$ comes from the action
%\begin{equation*}
%\xi: S_2 \rightarrow \operatorname{Aut}(\mathbb{Z}^2) \qquad
%1.(x,y) = (x,y) \text{ and } w.(x,y) = (y,x).
%\end{equation*}
%We conclude
%\begin{equation*}
%\mathscr{H}(G, I, 1) \cong R[\mathbb{Z}^2]\overset{\zeta}{\otimes}_R R[S_2] \cong
%R[\mathbb{Z}^2 \rtimes_{\xi} S_2] = R[\tilde{W}],
%\end{equation*}
%which coincides with the classical description of $\mathscr{H}(G, I, 1)$ 
%(\cite{VigLivre}, I.3.14).
%\end{Ex}
\begin{Ex}[Scalar Iwahori-Hecke algebra]
We can recover the scalar case (i. e. where $G=\operatorname{GL}_2, k=1$ and $V$ is the trivial
representations) as follows: Let $W=S_2$ be the Weyl group of $G$, then it is easy to see that $\mathscr{H}(\mathcal{G}, \mathcal{B}, V)$
is isomorphic to the combinatorial Hecke algebra $\mathscr{H}(W, q)$ (cf. \cite{VigLivre}, I.3.13, 
where this algebra is referred to as $H^0_R(2, q)$). As we are interested in the modular case $\ell | \#\operatorname{GL}_2(q)$, we 
have either $\ell | (q-1)$ or $\ell |(q+1)$. Let us assume that we are in the first case, so that $\mathscr{H}(W, q)$ is isomorphic the group algebra 
of $W$. If we assume moreover that $q > 2$, we see that $T^{\ast}|V = 0$ and hence the map 
$\zeta$ from the above corollary comes from the action
\begin{equation*}
\xi: S_2 \rightarrow \operatorname{Aut}(\mathbb{Z}^2) \qquad
1.(x,y) = (x,y) \text{ and } w.(x,y) = (y,x).
\end{equation*}
Let $I\subset G$ denote the Iwahori subgroup and $\tilde{W}$ the affine Weyl group of $G$. We conclude that
\begin{equation*}
\mathscr{H}(G, I, 1) \cong R[\mathbb{Z}^2]\overset{\zeta}{\otimes}_R R[S_2] \cong
R[\mathbb{Z}^2 \rtimes_{\xi} S_2] = R[\tilde{W}],
\end{equation*}
what matches with the classical description of $\mathscr{H}(G, I, 1)$ 
(\cite{VigLivre}, I.3.14). In the case $\ell | q+1$ one gets correspondingly an isomorphism between
$\mathscr{H}(G, I, 1)$ and the combinatorial (affine) Hecke algebra $H(\tilde{W}, -1)$.
%
%
%Using the modularity assumption $\ell | (q-1)$, Schur's lemma 
%and $T^{\ast}|V = 0$, this algebra is actually isomorphic
%to $R[W] = R[S_2]$. Then the map $\zeta$ comes from the action
%\begin{equation*}
%\xi: S_2 \rightarrow \operatorname{Aut}(\mathbb{Z}^2) \qquad
%1.(x,y) = (x,y) \text{ and } w.(x,y) = (y,x).
%\end{equation*}
%We conclude
%\begin{equation*}
%\mathscr{H}(G, I, 1) \cong R[\mathbb{Z}^2]\overset{\zeta}{\otimes}_R R[S_2] \cong
%R[\mathbb{Z}^2 \rtimes_{\xi} S_2] = R[\tilde{W}],
%\end{equation*}
%which coincides with the classical description of $\mathscr{H}(G, I, 1)$ 
%(\cite{VigLivre}, I.3.14).
\end{Ex}
\section{Relating simple blocks of different groups}
Retain the assumptions on $F$ and $R$ from the introduction and fix 
two numbers $k, m\geq 1$ and denote $G_1 = \operatorname{GL}_{2km}(F), G_2 = \operatorname{GL}_{2k}(F^m)$, where $F^m$
denotes the unramified extension of $F$ of degree $m$. Moreover, let $P_i\subset G_i$ be the standard
parabolic subgroups characterized by having a Levi decomposition
\begin{equation*}
P_i = M_i U_i \text{ with } M_i \cong H_i \times H_i,
\end{equation*}
where $H_1 = \operatorname{GL}_{km}(F)$ and $H_2 = \operatorname{GL}_{k}(F^m)$.

Consider two irreducible supercuspidal level-$0$ 
representations $\pi_i\in \operatorname{Rep}_R(H_i)$ (for $i=1,2$) and 
denote $\pi_i^2 = \pi_i \boxtimes \pi_i \in \operatorname{Rep}_R(M_i)$.
Having the banal situation in mind, it is natural to consider the following conjecture:
\begin{Conj}\label{mainConj}
There is an equivalence of categories
\begin{equation*}
\mathfrak{R}^{[M_1, \pi_1^2]_{G_1}}_R(G_1) \cong
\mathfrak{R}^{[M_2, \pi_2^2]_{G_2}}_R(G_2).
\end{equation*}
\end{Conj}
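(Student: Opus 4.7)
The plan is to follow the road-map laid out in the introduction: reduce each block to modules over a Hecke algebra, apply the twisted tensor decomposition from Corollary \ref{MainResult} on both sides, and then transfer an isomorphism of the resulting ``inner'' finite Hecke algebras to the full ones.

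First I would apply the machinery of Section \ref{sec_progen} and Theorem \ref{progenerator} to each block separately. Concretely, for $i = 1, 2$, let $\mathcal{H}_1 = \operatorname{GL}_{km}(q)$ and $\mathcal{H}_2 = \operatorname{GL}_{k}(q^m)$, let $\rho_i$ be the level-$0$ cuspidal representation of $\mathcal{M}_i = \mathcal{H}_i \times \mathcal{H}_i$ attached to $\pi_i^2$ via $\operatorname{ind}$-restriction, let $P_i$ be its projective cover in $R[\mathcal{M}_i]\!-\!\textbf{Mod}$, and set $V_i = P_i \oplus P_i^{\ast}$. Theorem \ref{progenerator} then produces pro-generators $\mathscr{V}_i = \operatorname{ind}_{\mathscr{P}_i}^{G_i}(V_i)$ and Morita equivalences
\begin{equation*}
\mathfrak{R}_R^{[M_i, \pi_i^2]_{G_i}}(G_i) \;\cong\; \mathscr{H}_R(G_i, \mathscr{P}_i, V_i)-\textbf{Mod}.
\end{equation*}
It therefore suffices to produce an $R$-algebra isomorphism $\mathscr{H}_R(G_1, \mathscr{P}_1, V_1) \cong \mathscr{H}_R(G_2, \mathscr{P}_2, V_2)$.

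Next, I would apply Corollary \ref{MainResult} to each side to obtain twisted tensor decompositions
\begin{equation*}
\mathscr{H}_R(G_i, \mathscr{P}_i, V_i) \;\cong\; R[\mathbb{Z}^2] \otimes_R^{\zeta_i} \mathscr{H}_R(\mathcal{G}_i, \mathcal{M}_i, V_i).
\end{equation*}
The outer factor $R[\mathbb{Z}^2]$ is identical on both sides, so the task reduces to exhibiting an isomorphism of the inner Hecke algebras $\mathscr{H}_R(\mathcal{G}_1, \mathcal{M}_1, V_1) \cong \mathscr{H}_R(\mathcal{G}_2, \mathcal{M}_2, V_2)$ that is additionally compatible with $\zeta_1, \zeta_2$. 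This is precisely the content of Conjecture \ref{BroueConj}. The initial plausibility check is that the parameters match: $\tau_{\ell, q, km} = q^{(km)^2} = (q^m)^{k^2} = \tau_{\ell, q^m, k}$, so the characteristic polynomial $\digamma$ governing the universal part is the same on both sides. Inspection of Corollary \ref{MainResult} shows that the twisting map $\zeta_i$ is determined purely by the action of $[w]_f$ and the distinguished element $[1]_f^1$, both of which are expressible in terms of the operator $T^{\ast}$ and the intertwiners in $I_1, I_w$; hence ``$\zeta$-compatibility'' is exactly the requirement that the inner finite-group isomorphism intertwine the $T^{\ast}$-operators — the form in which Conjecture \ref{BroueConj} is stated.

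Granted this, the conclusion is formal: combine the two decompositions via the inner isomorphism to get an $R$-algebra isomorphism of the two full Hecke algebras, then invoke the Morita equivalences from the first step to obtain the desired equivalence of blocks. The main obstacle is, of course, Conjecture \ref{BroueConj} itself. Even with a Broué-type heuristic in hand, constructing a concrete isomorphism of the inner finite Hecke algebras that intertwines $T^{\ast}$ requires detailed control over the projective covers of the level-$0$ cuspidal representations of $\operatorname{GL}_{km}(q)$ and $\operatorname{GL}_{k}(q^m)$ in non-defining characteristic $\ell$, and in particular over their behaviour under Harish-Chandra induction. Without this, the argument above is a structural reduction rather than a proof — which is exactly the role the present paper claims for it.
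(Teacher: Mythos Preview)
The statement is a \emph{conjecture}; the paper does not prove it. What the paper does provide is Corollary~5.3, which records that Conjecture~\ref{BroueConj} implies Conjecture~\ref{mainConj} via exactly the route you sketch: Theorem~\ref{progenerator} gives Morita equivalences with the functional Hecke algebras, Corollary~\ref{MainResult} decomposes each as $R[\mathbb{Z}^2]\otimes_R^{\zeta_i}\mathscr{H}_R(\mathcal{G}_i,\mathcal{M}_i,V_i)$, and a $T^{\ast}$-compatible isomorphism of the inner factors transports across the twist. Your proposal and the paper's conditional argument are the same, and you correctly flag at the end that this is a reduction, not a proof.

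One genuine slip: your ``plausibility check'' $\tau_{\ell,q,km}=q^{(km)^2}=(q^m)^{k^2}=\tau_{\ell,q^m,k}$ is false. The exponents are $k^2m^2$ and $mk^2$, which agree only for $m=1$. So the parameters $\tau_i$ do \emph{not} match in general, and the paper does not claim they do. This is harmless for the reduction because the twisting map $\zeta$ in Corollary~\ref{MainResult} is expressed purely through the $I_1/I_w$-decomposition and the operator $[1]_f\mapsto[1]_f^1=[1]_{T^{\ast}f}$; the constant $\tau$ lives inside the multiplication of the finite Hecke algebra and is carried along by any algebra isomorphism. But it does mean your ``plausibility check'' is misleading: the mismatch of $\tau$'s is precisely one reason why the conjectural isomorphism $\varphi$ in Conjecture~\ref{BroueConj} cannot simply send $[w_1]_1$ to $[w_2]_1$ and must be something subtler. (Relatedly, the paper notes after Example~\ref{ExbeforeConj} that equality of the characteristic polynomials $\digamma$ is itself only conjectural and would be a \emph{consequence} of Conjecture~\ref{BroueConj}, not an input to it.)
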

\begin{Rem}The case where $k=1$ is the most interesting and also the most general, as the proof for an arbitrary
$k$ can be given by applying the conjecture two times in the $k=1$-version.
\end{Rem}
\subsection{Connection with a conjecture on finite groups}\label{Requirements}
For this final subsection (where we restrict to the $k=1$ case as suggested in the above remark), 
consider the following (self-contained but slightly differing) notation:
\begin{itemize}
\item $R$ denotes an algebraically closed field of positive characteristic $\ell$;
\item $q$ denotes a power of a prime different from $\ell$;
\item $m\in \mathbb{N}_+$ and $\mathcal{G}_1 = \operatorname{GL}_{2m}(q), \mathcal{H}_1 = \operatorname{GL}_{m}(q), \mathcal{G}_2 = \operatorname{GL}_{2}(q^m),
\mathcal{H}_2 = \operatorname{GL}_{1}(q^m) = \mathbb{F}_{q^m}^{\times}$;
\item $w_1\in \mathcal{G}_1$ denotes the permutation matrix corresponding to $(1, m+1)(2, m+2)\ldots (m, 2m)\in S_{2m}$
and $w_2 = \left(\begin{smallmatrix}& 1\\ 1\end{smallmatrix}\right)\in \mathcal{G}_2$ denotes the permutation matrix corresponding to $(1, 2)\in S_{2}$;
\item $\mathcal{P}_i$ denotes the standard parabolic subgroup of $\mathcal{G}_i$ characterized by admitting
a Levi decomposition with Levi factor $\mathcal{M}_i = \mathcal{H}_i \times \mathcal{H}_i$;
\item $\textbf{i}_i$ denotes the Harish-Chandra induction functor from representations of $\mathcal{M}_i$ to 
representations of $\mathcal{G}_i$;
\item $\pi_2=R$ denotes the trivial character of $\mathcal{H}_2$ and $\pi_1$ denotes some supercuspidal irreducible representation of $\mathcal{H}_1$ over $R$. 
$\pi_i^2$ denotes $\pi_i \boxtimes \pi_i$ as a representation of $\mathcal{M}_i$. 
\end{itemize}
For $V_i$ a representation of $\mathcal{M}_i$, the elements of the Hecke algebra
$\mathscr{H}_R(\mathcal{G}_i, \mathcal{M}_i, V_i) =
\operatorname{End}_{\mathcal{G}_i}(\textbf{i}_i(V_i))$ can be characterized (by \cite{VigLivre}, I.8.5)
as maps $\varphi: G\rightarrow \operatorname{End}_R(V_i)$ with a similar bi-equivariance condition
as in (\ref{HeckeChar}). Thus, denote by $T^{\ast}_{V_i}$ the element of $\mathscr{H}_R(\mathcal{G}_i, \mathcal{M}_i, V_i)$
which is supported on $\mathcal{P}_i w_i \mathcal{P}_i$ and determined by 
\begin{equation*}
w_i \overset{T^{\ast}_{V_i}\,}{\longmapsto} \sum_{g\in \mathcal{H}_i} \begin{pmatrix}g& \\ & -g^{-1}
\end{pmatrix}.
\end{equation*}
\begin{Conj}\label{BroueConj}
There exist representations ${V}_i \in \operatorname{Rep}_R(\mathcal{M}_i)\;\, (i=1,2)$ fulfilling
\begin{itemize}
%\item $V_i$ is cuspidal in the sense that all non-trivial Harish-Chandra functors vanish on $V_i$ (but not necessarily irreducible);
\item ${V}_i^{\tt{ss}}\cong \oplus \pi_i^2$ (where $\texttt{ss}$ denotes the semi-simplification), i. e. all JH-constituents of $V_i$ are
isomorphic to $\pi_i^2$;
\item ${V}_i$ is projective and isomorphic to its contragredient;
%\item For any subquotient $W$ of $\textbf{i}_i(\pi_i^2)$ 
%the set $\operatorname{Hom}_{\mathcal{G}_i}(\textbf{i}_i({V}_i), W)$ is non-trivial.
\end{itemize}
such that there exists an isomorphism of $R$-algebras
\begin{equation*}
\varphi: \mathscr{H}_R(\mathcal{G}_1, \mathcal{M}_1, {V}_1) \rightarrow
\mathscr{H}_R(\mathcal{G}_2, \mathcal{M}_2, {V}_2)
\end{equation*}
which maps $T^{\ast}_{{V}_1}$ to $T^{\ast}_{{V}_2}$.
\end{Conj}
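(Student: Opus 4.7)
My plan is to attack Conjecture \ref{BroueConj} by first constructing natural candidates for the $V_i$ from projective covers, then reducing the existence of $\varphi$ to a combination of a known finite-group block equivalence and a compatibility check on a single distinguished generator, which will be the sticking point.

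First I would fix $V_i = P_i \oplus P_i^*$, where $P_i$ denotes a projective cover of $\pi_i^2$ inside the supercuspidal block of $R[\mathcal{M}_i]$. This mirrors the construction from Section \ref{sec_progen} that proved effective there, and it automatically delivers all three bullet points of the conjecture: the semisimplification is a sum of copies of $\pi_i^2$ because $P_i$ lies in the supercuspidal block of $\pi_i^2$; projectivity is preserved by direct sums and by dualizing in a block with projective simples (since $\pi_i^2$ is irreducible); and self-contragredience holds by construction of the sum with $P_i^*$.

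Second I would reduce the Hecke-algebra isomorphism to a finite-group statement. Theorem \ref{weiobg084ht802hrth3rieh98z25zt934hg98rghwibte32} tells us that $\mathscr{H}_R(\mathcal{G}_i, \mathcal{M}_i, V_i)$ is generated as an $R$-algebra by $\operatorname{End}_{\mathcal{M}_i}(V_i) \subset \mathscr{H}_R$ in the ``diagonal'' part, together with a single ``off-diagonal'' layer of the form $[w_i]_f$, subject to the relation $[w_i]_f [w_i]_g = \tau\,[1]_{fg} + [w_i]^1_{fg}$. So an isomorphism $\varphi$ amounts to (a) an isomorphism $\operatorname{End}_{\mathcal{M}_1}(V_1) \cong \operatorname{End}_{\mathcal{M}_2}(V_2)$, and (b) a matching $[w_1]_{\mathrm{id}} \mapsto [w_2]_{\mathrm{id}}$ consistent with the $T^\ast$-operator (since $[\eta]_f^1 = [\eta]_{T^\ast f}$). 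For (a), both $\operatorname{End}_{\mathcal{M}_i}(V_i)$ decompose as tensor squares of $\operatorname{End}_{\mathcal{H}_i}(P_i^{(1)} \oplus (P_i^{(1)})^*)$ for projective covers at the $\mathcal{H}_i$-level. In the Brou\'{e}-Michel setting the supercuspidal block of $\pi_1$ in $GL_m(q)$ has cyclic defect group isomorphic to that of the unipotent block of $\mathbb{F}_{q^m}^\times$, so the two endomorphism rings should coincide via the Morita equivalence predicted for finite reductive groups with cyclic defect.

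Third, for the compatibility with $T^\ast$, my plan is to realize both Hecke algebras as endomorphism algebras of parabolically induced modules and to build the isomorphism $\varphi$ from a \emph{functor} $\Phi : \operatorname{Rep}_R(\mathcal{H}_1) \to \operatorname{Rep}_R(\mathcal{H}_2)$ intertwining the respective Harish-Chandra inductions and the $w_i$-conjugation actions. Then $T^\ast_{V_i}$ can be rewritten as the composition of parabolic restriction with a canonical intertwiner for the longest element, and its image under $\varphi$ will automatically be $T^\ast_{V_2}$ provided $\Phi$ respects these structural ingredients.

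The main obstacle, and the reason this remains a conjecture, is precisely Step three: it is not enough to know that an abstract isomorphism of endomorphism algebras exists — one must show it can be chosen functorially enough to carry the explicit summation-over-$\mathcal{H}_i$ element $T^\ast_{V_1}$ to $T^\ast_{V_2}$, despite $|\mathcal{H}_1|$ and $|\mathcal{H}_2|$ differing dramatically. I would first test the conjecture in low-rank banal-adjacent cases (where $\ell$ divides $q^m-1$ exactly once, so that Example \ref{ExbeforeConj} governs both sides and $\digamma = T^2$ in each) to see whether $\varphi$ exists in a canonical way, and only then attempt the general non-banal case.
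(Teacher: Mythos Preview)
The statement you are addressing is a \emph{conjecture}; the paper does not prove it and offers no argument toward it beyond the surrounding discussion. There is therefore no ``paper's own proof'' to compare against. What the paper does do is show (in the final Corollary) that this conjecture would imply Conjecture~\ref{mainConj}, via the tensor decomposition of Corollary~\ref{MainResult}.

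Your proposal is accordingly not a proof but a strategy outline, and you yourself flag this in the final paragraph. A few remarks on the plan itself. Your choice $V_i = P_i \oplus P_i^*$ is exactly the construction the paper uses in Section~\ref{sec_progen} and is the natural candidate; note however that your justification for projectivity of $P_i^*$ (``a block with projective simples'') is off --- in the non-banal case $\pi_i^2$ is typically \emph{not} projective. The correct reason is that $R[\mathcal{M}_i]$ is a symmetric (hence self-injective) algebra, so duals of projectives are projective. In Step~2 your description of the finite Hecke algebra is essentially right, but be careful: writing $[w_i]_{\mathrm{id}}$ presupposes that the identity of $V_i$ lies in $I_{w_i} = \operatorname{Hom}_{\mathcal{M}_i}(V_i^{w_i}, V_i)$, which requires fixing an identification $V_i^{w_i}\cong V_i$; moreover the pair $(\operatorname{End}_{\mathcal{M}_i}(V_i),\,[w_i]_{\mathrm{id}})$ need not generate the whole algebra, since the $I_{w_i}$-piece is an entire $\operatorname{End}_{\mathcal{M}_i}(V_i)$-bimodule, not a single element. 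Finally, the crux you identify in Step~3 is exactly the obstruction the paper leaves open: matching the explicit operators $T^\ast_{V_i}$, which are sums over groups of very different orders, is the content of the conjecture, and no known functorial equivalence is currently known to deliver this compatibility.
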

If Conjecture \ref{BroueConj} holds true, it seems reasonable to expect that the self-dual pro-generators constructed
in Theorem \ref{progenerator} will do the job. However, independent of the origin of these $V_i$ and as outlined
in the introduction, we can make use of the results of the previous chapter (in particular of Corollary
\ref{MainResult}) to deduce the following:
\begin{Cor}
Conjecture \ref{BroueConj} implies 
%Conjecture \ref{PolyConj} and 
Conjecture \ref{mainConj}. Moreover, it implies that the conjectured identity stated below Example \ref{ExbeforeConj} holds.
\end{Cor}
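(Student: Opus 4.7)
The strategy is to reduce Conjecture \ref{mainConj} to Conjecture \ref{BroueConj} by inserting both sides into the twisted-tensor-product decomposition of Corollary \ref{MainResult} and transporting the latter conjecture's isomorphism $\varphi$ through the resulting identifications.

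First I would verify that the representations $V_i$ furnished by Conjecture \ref{BroueConj} can replace the representation $V = P\oplus P^{\ast}$ used in Theorem \ref{progenerator}. Inspecting that proof, the only properties of $V$ actually used are projectivity, cuspidality, and the existence of both a surjection and an embedding between $V$ and the irreducible supercuspidal input. The first two are contained verbatim in the hypotheses of Conjecture \ref{BroueConj}; the third holds because projectivity of $V_i$ combined with $V_i^{\mathrm{ss}}\cong\bigoplus \pi_i^2$ forces a surjection $V_i\twoheadrightarrow \pi_i^2$, which dualizes via $V_i\cong V_i^{\ast}$ to the required embedding $\pi_i^2\hookrightarrow V_i$ (using the self-duality of the irreducible supercuspidal $\pi_i^2$). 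Hence $\mathscr{V}_i := \operatorname{ind}_{\mathscr{P}_i}^{G_i}(V_i)$ is a pro-generator of the block $\mathfrak{R}_R^{[M_i,\pi_i^2]_{G_i}}(G_i)$, yielding a Morita equivalence with $\mathscr{H}_R(G_i,\mathscr{P}_i,V_i)$ for $i=1,2$.

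Next, Corollary \ref{MainResult} applied on each side produces isomorphisms
\begin{equation*}
\mathscr{H}_R(G_i,\mathscr{P}_i,V_i)\cong R[\mathbb{Z}^2]\otimes_R^{\zeta_i}\mathscr{H}_R(\mathcal{G}_i,\mathcal{M}_i,V_i),
\end{equation*}
and I would attempt to lift $\varphi$ to an isomorphism of these tensor products via $\operatorname{id}_{R[\mathbb{Z}^2]}\otimes\varphi$. Inspecting the formulas in Corollary \ref{MainResult}, the twisting map $\zeta_i$ depends on its input only through $f$, through the distinguished element $T^{\ast}_{V_i}$ (since $[1]_f^1 = [1]_{T^{\ast}f}$), and through the splitting $\mathscr{H}_R(\mathcal{G}_i,\mathcal{M}_i,V_i) = I_1\oplus I_w$ by support. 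Since $\varphi$ preserves $T^{\ast}$ by hypothesis, and since the two factors $I_1 = \operatorname{End}_{\mathcal{M}_i}(V_i)$ (the unital subalgebra) and $I_w$ (the $I_1$-sub-bimodule generated by $T^{\ast}_{V_i}$) are both intrinsic to the pair $(\mathscr{H}_R(\mathcal{G}_i,\mathcal{M}_i,V_i), T^{\ast}_{V_i})$, the defining clauses of $\zeta_1$ match those of $\zeta_2$ term by term. This gives an isomorphism $\mathscr{H}_R(G_1,\mathscr{P}_1,V_1)\cong\mathscr{H}_R(G_2,\mathscr{P}_2,V_2)$, which combined with the first paragraph proves Conjecture \ref{mainConj}.

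Restricting the same $\varphi$ to the subalgebra $\mathscr{H}^{\dagger}_R(\mathcal{G}_i,\mathcal{M}_i,V_i)$ generated by $T^{\ast}_{V_i}$ yields an identification, via the assignment $T\mapsto T$, of the two quotients $R[T]^{\tau_1}/(\digamma_{R,q,m})$ and $R[T]^{\tau_2}/(\digamma_{R,q^m,1})$, forcing the minimal-degree monic kernel generators to coincide as elements of the underlying $R$-module $R[T]$, which is the conjectured identity $\digamma_{R,q,m}=\digamma_{R,q^m,1}$. The main obstacle in the plan is the intertwining step of the second paragraph: the intrinsic description of $I_w$ as the $I_1$-sub-bimodule generated by $T^{\ast}_{V_i}$ needs to be verified in detail, and in a degenerate case such as $T^{\ast}_{V_i}=0$ one would need to strengthen Conjecture \ref{BroueConj} to require that $\varphi$ additionally preserve the $\mathbb{Z}/2$-grading by support.
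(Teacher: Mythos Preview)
Your approach is exactly what the paper intends: the Corollary is stated without a written-out proof, referring only to Corollary \ref{MainResult} and to the sketch in the introduction, and your three steps --- checking that the $V_i$ from Conjecture \ref{BroueConj} feed into Theorem \ref{progenerator}, applying Corollary \ref{MainResult} on each side, and transporting $\varphi$ through the twisted tensor product --- are that sketch made explicit.

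Your final caveat is well placed and in fact goes beyond what the paper addresses. The twisting map $\zeta$ in Corollary \ref{MainResult} genuinely treats $I_1$ and $I_w$ differently, so for $\operatorname{id}_{R[\mathbb{Z}^2]}\otimes\varphi$ to intertwine $\zeta_1$ with $\zeta_2$ one needs $\varphi$ to respect the support grading, not merely to send $T^{\ast}$ to $T^{\ast}$. Your proposed intrinsic description of $I_w$ as the $I_1$-bimodule generated by $T^{\ast}$ is not guaranteed to hold (it fails whenever the endomorphism underlying $T^{\ast}$ is a non-unit in $I_1$, for instance when $T^{\ast}$ is nilpotent), so your suggested strengthening of Conjecture \ref{BroueConj} to require grading-preservation is the honest formulation.

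One further subtlety you did not flag concerns the $\digamma$-identity: the parameters $\tau_1 = q^{m^2}\bmod\ell$ and $\tau_2 = q^{m}\bmod\ell$ need not coincide, so the two ambient rings $R[T]^{\tau_1}$ and $R[T]^{\tau_2}$ are a priori different and the assignment $T\mapsto T$ is not a ring map between them. Hence the isomorphism $\mathscr{H}^{\dagger}_1\cong\mathscr{H}^{\dagger}_2$ sending generator to generator does not literally force the monic kernel generators $\digamma_i$ to agree as elements of $R[T]$; the paper is equally silent on this point.
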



\begin{thebibliography}{10}

\bibitem{Broue}
Michel Brou{\'e}, \textsl{Isom\'etries parfaites, types de blocs, cat\'egories
  d\'eriv\'ees}, Ast\'erisque \textbf{181-182} (1990), 61-92.

\bibitem{BDC}
Jonathan Brundan, Richard Dipper and Alexander Kleshchev, \textsl{Quantum
  linear groups and representations of $\operatorname{GL}_n(\mathbb{F}_q)$},
  Memoirs of the American Mathematical Society, part 706, American Mathematical
  Society, Providence, RI, 2001.

\bibitem{BH}
Colin~J. Bushnell and Guy Henniart, \textsl{The local {L}anglands conjecture for {$\rm GL(2)$}}, 
Grundlehren der Mathematischen Wissenschaften \textbf{335}, Springer-Verlag, 2006.

\bibitem{bushnell1993admissible}
Colin~J. Bushnell and Philip~C. Kutzko, \textsl{The admissible dual of
  $\operatorname{GL}(n)$ via compact open subgroups}, Princeton University
  Press, 1993.
  
\bibitem{twisted}
  Andreas Cap, Hermann Schichl and Ji{\v{r}}{\'{\i}} Van{\v{z}}ura,
\textsl{On twisted tensor products of algebras}, Communications in
Algebra \textbf{23} (1995), 4701-4735.       

\bibitem{cht}
	Laurent Clozel, Michael Harris and Richard Taylor, \textsl{Automorphy for some {$l$}-adic lifts of automorphic mod {$l$}
              {G}alois representations} (With Appendix A, summarizing unpublished work of Russ Mann,
              and Appendix B by Marie-France Vign{\'e}ras), Publ. Math. Inst. Hautes \'Etudes Sci. \textbf{108}, 1-181 (2008).       
  
  
\bibitem{guiraud}
David-A. Guiraud, \textsl{On semisimple $\ell$-modular Bernstein-blocks
of a $p$-adic general linear group}, Journal of Number Theory \textbf{133} (2013), 3524-2548.


\bibitem{Howe}
Roger Howe and Allen Moy, \textsl{Harish-Chandra Homomorphisms for $\mathfrak{p}$-adic
Groups}, CBMS Regional Conference Series in Mathematics, \textbf{59} (1985).



\bibitem{VigLivre}
Marie-France Vign{\'e}ras, \textsl{Repr{\'e}sentations $l$-modulaires d'un
  groupe r{\'e}ductif $p$-adique avec $l\neq p$}, Birkh{\"a}user, 1996.

\bibitem{vigneras1998induced}
Marie-France Vign{\'e}ras, \textsl{Induced {$R$}-representations of $p$-adic
  reductive groups}, Selecta Mathematica, New Series \textbf{4} (1998), no.~4,
  549--623.


\bibitem{vigneras2003schur}
Marie-France Vign{\'e}ras, \textsl{Schur algebras of reductive $p$-adic groups,
  $\text{I}$}, Duke Mathematical Journal \textbf{116} (2003), no.~1, 35--75.


\end{thebibliography}
\end{document}